\newtheorem{prop}{Proposition}[section]
\newtheorem{lemma}[prop]{Lemma}
\newtheorem{theo}[prop]{Theorem}
\newtheorem{coroll}[prop]{Corollary}
{\theorembodyfont{\rmfamily}
 \newtheorem{remark}[prop]{Remark}
 \newtheorem{definition}[prop]{Definition}
 \newtheorem{example}[prop]{Example}
}
\newcommand{\qedup}{\par\vspace{-1.5em}}
\newenvironment{proof}[1][Proof]{%
  \begin{list}{}{%
      \settowidth{\labelwidth}{\textit{#1.}}%
      \setlength{\itemindent}{\labelwidth}%
      \addtolength{\itemindent}{\labelsep}%
      \setlength{\leftmargin}{0pt}
      \setlength{\parsep}{0pt}
      \setlength{\listparindent}{\parindent}
    }\item[\textit{#1.}]}
\newenvironment{proof*}[1][Proof]{%
  \begin{list}{}{%
      \settowidth{\labelwidth}{\textit{#1.}}%
      \setlength{\itemindent}{\labelwidth}%
      \addtolength{\itemindent}{\labelsep}%
      \setlength{\leftmargin}{0pt}
    }\item[\textit{#1.}]}
  {\end{list}}
\newcommand{\marginpostpar}[1]{\marginpar{}}
\newcommand{\marginnote}[1]{\marginpar{}}
\newcommand{\braket}[2]{\langle #1|#2\rangle}
\newcommand{\Braket}[2]{\Bigl\langle #1\Big|#2\Bigr\rangle}
\newcommand{\Jorth}{{\langle\perp\rangle}}
\newcommand{\Jadj}{{\langle*\rangle}}
\newcommand{\pmat}[1]{\begin{pmatrix}#1\end{pmatrix}}
\newcommand{\set}[2]{\{#1\,|\,#2\}}
\newcommand{\Set}[2]{\bigl\{#1\,\big|\,#2\bigr\}}
\DeclareMathOperator{\mspan}{span}
\DeclareMathOperator{\Real}{Re}
\DeclareMathOperator{\Imag}{Im}
\newcommand{\proj}{\mathrm{pr}}
\newcommand{\bbN}{\mathbbm{N}}
\newcommand{\bbZ}{\mathbbm{Z}}
\newcommand{\bbR}{\mathbbm{R}}
\newcommand{\bbC}{\mathbbm{C}}
\newcommand{\mdef}{\mathcal{D}}
\newcommand{\range}{\mathcal{R}}
\newcommand{\rsub}{\mathcal{L}}
\newcommand{\rbsdec}{\sideset{}{^2}\bigoplus}
\newlength{\scriptpoint}
\newcommand{\sddots}{
  \raisebox{2\scriptpoint}{$\scriptstyle.$}
  \raisebox{\scriptpoint}{$\scriptstyle.$}
  \makebox[\scriptpoint]{$\scriptstyle.$}
}
\begin{document}

\title{Hamiltonians with Riesz Bases of Generalised Eigenvectors
  and Riccati Equations
}
\author{Christian Wyss\footnote{
    Department of
    Mathematics and Informatics,
    University of Wuppertal,
    Gau\ss stra\ss e 20,
    D-42119 Wuppertal,
    Germany,
    \texttt{wyss@math.uni-wuppertal.de}}}
\date{\today}
\maketitle

\begin{center}
  \parbox{11cm}{\small \textbf{Abstract.}
    An algebraic  Riccati equation for linear operators is studied,
    which arises in systems theory.
    For the case that all involved operators are unbounded, 
    the existence of 
    infinitely many selfadjoint solutions 
    is shown.
    To this end,
    invariant graph subspaces
    of the associated Hamiltonian operator matrix
    are constructed
    by means of
    a Riesz basis with parentheses of generalised 
    eigenvectors and two indefinite inner products.
    Under additional assumptions, the existence and a representation of
    all bounded solutions is obtained.
    The theory is applied to Riccati equations of differential operators.
  } \\[1.5ex]
  \parbox{11cm}{\small\textbf{Keywords.}
    Riccati equation, Hamiltonian operator matrix,
    Riesz basis of generalised eigenvectors,
    invariant subspace, indefinite inner product.
  } \\[1.5ex]
  \parbox{11cm}{\small\textbf{Mathematics Subject Classification.}  
    Primary 47A62; Secondary 47A15, 47A70, 47B50, 47N70.}
\end{center}
\vspace{0em}

\section{Introduction}

We consider the algebraic Riccati equation
\begin{equation}\label{eq:are}
  A^*X+XA+XBX-C=0
\end{equation}
for linear operators on a Hilbert space $H$ where
$B,C$ are selfadjoint and nonnegative.
In particular, we study the case where $B$ and $C$ are unbounded.
Riccati equations of type \eqref{eq:are} are a key tool in systems theory,
see e.g.\ \cite{curtain-zwart,lancaster-rodman} and the references therein.
Unbounded $B$ and $C$ appear e.g.\ in
\cite{lasiecka-triggiani,pritchard-salamon,
  weiss-weiss}.

It is well known that solutions $X$ of \eqref{eq:are} are in
one-to-one correspondence with graph subspaces which are invariant
under the operator matrix
\[T=\pmat{A&B\\C&-A^*},\]
the  so-called Hamiltonian.
This correspondence was extensively studied in the finite-dimensional setting
and  led to a
complete description of all solutions of the Riccati equation,
see e.g.\ \cite{lancaster-rodman,martensson,potter}.
In the infinite-dimensional setting with $B,C$ bounded, 
the  invariant subspace approach was used 
by Kuiper and Zwart \cite{kuiper-zwart} for Riesz-spectral $T$
and by Langer, Ran and van de Rotten \cite{langer-ran-rotten}
for dichotomous $T$ (see also \cite{bubak-mee-ran}).

We extend these results to the case where $B$ and $C$ are unbounded:
For Hamiltonians with a
Riesz basis with parentheses of generalised eigenvectors
we show the existence of infinitely many 
selfadjoint solutions of \eqref{eq:are}.
Note that the concept of a Riesz basis with parentheses of generalised
eigenvectors includes Riesz-spectral operators,
and it also allows for operators which are not dichotomous.

In systems theory, solutions of \eqref{eq:are}
which are bounded and nonnegative are of
particular importance.
For the case that $T$ has a Riesz basis of generalised eigenvectors,
that its spectrum is contained in a strip around the imaginary axis,
and that $B$ and $C$ are uniformly positive, we prove that there are
infinitely many bounded,
selfadjoint, boundedly invertible solutions,
among them a nonnegative one $X_+$ and a nonpositive one $X_-$.
Moreover, for every bounded selfadjoint solution $X$ we prove the relations
\begin{equation}\label{eq:solrel}
  X_-\leq X\leq X_+ \qquad\text{and}\qquad X=X_+P+X_-(I-P),
\end{equation}
where $P$ is an appropriate projection.

Bounded nonnegative solutions of \eqref{eq:are} were obtained 
in \cite{kuiper-zwart,langer-ran-rotten}
without  the assumption of uniform positivity of $B,C$.
However, in \cite{langer-ran-rotten} the spectrum $\sigma(A)$ of $A$ was 
restricted to a sector in the open left half-plane
while here $\sigma(A)$ may also contain points in the closed right
half-plane.
In \cite{kuiper-zwart} conditions for the existence of solutions were
formulated in terms of the eigenvectors of $T$ while we impose
conditions on the operators $A,B,C$ only.
In the system theoretic setting, the relations \eqref{eq:solrel} were derived
in \cite{curtain-iftime-zwart,opmeer-iftime},
yet under the explicit assumption of the
existence of $X_-$.

For general block operator matrices, invariant graph subspaces 
are connected to solutions of a corresponding Riccati equation too.
This was exploited in \cite{langer-tretter,ran-mee}
for certain dichotomous operator matrices
and in \cite{kostrykin-makarov-motovilov} for selfadjoint ones.
We also mention that, 
in systems theory,
nonnegative solutions of
\eqref{eq:are} are constructed by minimising 
a quadratic functional, see e.g.\ \cite{curtain-zwart}.

The structure of this article is as follows:
In Sections~\ref{sec:rbs} and~\ref{sec:invrbs} we study
the concept of a Riesz basis of subspaces which is  finitely spectral
for a linear operator on a Hilbert space.
Such a Riesz basis consists of finite-dimensional invariant subspaces, 
and it yields many non-trivial
infinite-dimensional invariant subspaces, which we call compatible,
see Corollary~\ref{coroll:invcomp}.
Up to certain technical details, a finitely spectral Riesz basis of subspaces
is equivalent to
a Riesz basis with parentheses of generalised eigenvectors,
see 
Remark~\ref{rem:invrbs-relat}.
Here we use the basis of subspaces notion 
since it  is more convenient for our purposes.
For the relation to dichotomous operators, see 
Remark~\ref{rem:dichot}.

In Theorem~\ref{theo:psubpert-invrbs}
we use perturbation theory to prove a general existence result for
finitely spectral Riesz bases of subspaces
and apply it
to Hamiltonians in Theorem~\ref{theo:ham-invrbs};
Theorem~\ref{theo:ham-rbeigvec} even yields a Riesz basis of eigenvectors and 
finitely many generalised eigenvectors.
On the other hand, there is a huge literature on Riesz bases
(with or without parentheses) of eigenvectors
for various types of operators, 
e.g.\ \cite{jacob-trunk-winklmeier,xu-yung,zwart};
all these provide examples for finitely spectral
Riesz bases of subspaces.

In Section~\ref{sec:ham} we use ideas from \cite{langer-ran-rotten}
and consider
two indefinite inner products with fundamental symmetries
$J_1$ and $J_2$ which are associated with the Hamiltonian:
$T$ is $J_1$-skew-symmetric and $J_2$-accretive.
This implies the symmetry of the spectrum of $T$ with respect to the
imaginary axis and also yields a characterisation of the purely imaginary
eigenvalues.
In Section~\ref{sec:invsubham} we then construct hypermaximal $J_1$-neutral
as well as $J_2$-nonnegative and -nonpositive compatible
subspaces; see Theorem~\ref{theo:hypmaxinv} and
Proposition~\ref{prop:posinv}.

The main existence theorems for solutions of \eqref{eq:are} are presented
in Sections~\ref{sec:sol} and~\ref{sec:bndsol}:
In Theorem~\ref{theo:riccsol} we establish conditions on $T$ such that every
hypermaximal $J_1$-neutral compatible subspace is the graph of a selfadjoint
solution of \eqref{eq:are}.
$J_2$-nonnegative and -nonpositive subspaces yield nonnegative and
nonpositive solutions.
Corollary~\ref{coroll:nonnegsol} provides a sufficient condition
for the existence of
infinitely many selfadjoint solutions.
All these solutions are unbounded in general, and therefore
the Riccati equation takes a slightly different form, see also
Proposition~\ref{prop:riccequiv} and Example~\ref{ex:unbndsol}.
The existence of bounded solutions and the relations \eqref{eq:solrel}
are proved in Theorem~\ref{theo:bndriccsol}.
Finally note that the graph of an arbitrary solution of \eqref{eq:are}
is $T$-invariant, but not necessarily a compatible
subspace, compare Theorem~\ref{theo:bndsol-charac} and
Example~\ref{ex:req-uposmult}.

\section{Riesz bases of subspaces}
\label{sec:rbs}

We recall the closely related concepts of Riesz bases,
Riesz bases with parentheses, and Riesz bases of subspaces, see
\cite[\S 1]{vizitei-markus}, \cite[Chapter VI]{gohberg-krein},
\cite[\S15]{singer2} and \cite[\S 2]{wyss-phd} for more details.

Let $V$ be a separable Hilbert space.
We denote the subspace generated by a family 
$(V_\lambda)_{\lambda\in\Lambda}$ of subspaces $V_\lambda\subset V$ by
\[\sum_{\lambda\in\Lambda}V_\lambda=\{x_{\lambda_1}+\dots+x_{\lambda_n}
  \,|\,x_{\lambda_j}\in V_{\lambda_j},\lambda_j\in\Lambda, n\in\bbN\}.\]
The family is said to be \emph{complete} if 
$\sum_{\lambda\in\Lambda}V_\lambda\subset V$ is dense.

\begin{definition}\label{def:rbs}
  Let $V$ be a separable Hilbert space.
  \begin{itemize}
  \item[(i)]
    A sequence $(v_k)_{k\in\bbN}$ in $V$ is called a \emph{Riesz basis}
    of $V$ if there is an isomorphism $\Phi:V\to V$ such that
    $(\Phi v_k)_{k\in\bbN}$ is an orthonormal basis of $V$.
  \item[(ii)]
    A sequence of closed subspaces $(V_k)_{k\in\bbN}$ of $V$ is called a
    \emph{Riesz basis of subspaces} of $V$ if there is an isomorphism
    $\Phi:V\to V$ such that $(\Phi(V_k))_{k\in\bbN}$ is a
    complete system  of pairwise orthogonal subspaces.
  \end{itemize}
\end{definition}

The sequence $(v_k)_{k\in\bbN}$ is a Riesz basis if and only if
$\mspan\{v_k\}\subset V$ is dense and there are constants $m,M>0$ such that
\begin{equation}\label{eq:rb-ineq}
  m\sum_{k=0}^n|\alpha_k|^2\leq\biggl\|\sum_{k=0}^n\alpha_kv_k\biggr\|^2\leq
  M\sum_{k=0}^n|\alpha_k|^2, \quad \alpha_k\in\bbC,\,n\in\bbN.
\end{equation}
In this case
every $x\in V$ has a unique representation
$x=\sum_{k=0}^\infty\alpha_k v_k$, $\alpha_k\in\bbC$,
where the convergence of the series is unconditional.
The sequence of closed subspaces $(V_k)_{k\in\bbN}$ is
a Riesz basis of subspaces of $V$ if and only if
$(V_k)_{k\in\bbN}$ is complete and
there exists a constant $c\geq 1$ such that
\begin{equation}\label{rbs-ineq-fin}
  c^{-1}\sum_{k\in F}\|x_k\|^2 \leq \Big\|\sum_{k\in F}x_k\Big\|^2
  \leq c\sum_{k\in F}\|x_k\|^2
\end{equation}
for all finite subsets $F\subset\bbN$ and $x_k\in V_k$.

\begin{prop}\label{prop:rbs}
  A Riesz basis of subspaces $(V_k)_{k\in \bbN}$ has the following
  properties:
  \begin{itemize}
  \item[(i)] There are projections $P_k\in L(V)$ onto $V_k$
    satisfying $P_jP_k=0$ for $j\neq k$ and a constant $c\geq1$ such that
    \begin{equation}\label{rbs-ineq}
      c^{-1}\sum_{k=0}^\infty\|P_k x\|^2\leq\|x\|^2
      \leq c\sum_{k=0}^\infty\|P_k x\|^2
      \quad\text{for all}\quad x\in V.
    \end{equation}
  \item[(ii)]
    If\, $x_k\in V_k$ with 
    $\sum_{k=0}^\infty\|x_k\|^2<\infty$, then
    the series $\sum_{k=0}^\infty x_k$ converges unconditionally.
  \item[(iii)] Every $x\in V$ has a unique expansion
    $x=\sum_{k=0}^\infty x_k$ with $x_k\in V_k$,
    and we have $x_k=P_k x$.
  \end{itemize}
\end{prop}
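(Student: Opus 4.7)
The whole strategy is to transfer the statement to the orthogonal case via the isomorphism $\Phi$ supplied by Definition~\ref{def:rbs}(ii) and then pull back. Write $W_k=\Phi(V_k)$, so that $(W_k)_{k\in\bbN}$ is a complete system of pairwise orthogonal closed subspaces of $V$. For such a system the standard Hilbert space theory gives a Hilbert-sum decomposition $V=\bigoplus_{k}W_k$: the orthogonal projections $Q_k$ onto $W_k$ satisfy $Q_jQ_k=0$ for $j\neq k$, every $y\in V$ has the expansion $y=\sum_k Q_ky$ with $\|y\|^2=\sum_k\|Q_ky\|^2$ (Parseval), and such orthogonal series converge unconditionally. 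These are the only facts from the orthogonal case I would need; I would simply cite them.

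For (i) the plan is to set $P_k:=\Phi^{-1}Q_k\Phi$. Then $P_k$ maps onto $V_k=\Phi^{-1}(W_k)$, is a projection (since $Q_k$ is), and $P_jP_k=\Phi^{-1}Q_jQ_k\Phi=0$ for $j\neq k$. For the norm estimate I would use $\Phi P_k=Q_k\Phi$, which gives
\[
\|\Phi^{-1}\|^{-1}\|\Phi P_kx\|\leq \|P_kx\|\leq\|\Phi^{-1}\|\|\Phi P_kx\|=\|\Phi^{-1}\|\|Q_k\Phi x\|,
\]
sum in $k$ using Parseval for $\Phi x$ in the orthogonal system, and finally apply the two-sided bound $\|\Phi^{-1}\|^{-1}\|x\|\leq\|\Phi x\|\leq\|\Phi\|\|x\|$. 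This yields \eqref{rbs-ineq} with $c=(\|\Phi\|\|\Phi^{-1}\|)^2\geq1$.

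For (ii), given $x_k\in V_k$ with $\sum\|x_k\|^2<\infty$, note that $\Phi x_k\in W_k$ and $\sum\|\Phi x_k\|^2\leq\|\Phi\|^2\sum\|x_k\|^2<\infty$. Hence the orthogonal series $\sum_k\Phi x_k$ converges unconditionally in $V$, and applying the bounded operator $\Phi^{-1}$ preserves unconditional convergence, giving (ii). For (iii), apply the orthogonal expansion to $\Phi x$: we have $\Phi x=\sum_k Q_k\Phi x$, and applying $\Phi^{-1}$ gives $x=\sum_k P_kx$, which is the claimed expansion. Uniqueness follows from the orthogonal case as well: if $x=\sum_k x_k$ with $x_k\in V_k$, then $\Phi x=\sum_k\Phi x_k$ with $\Phi x_k\in W_k$, forcing $\Phi x_k=Q_k\Phi x$ and hence $x_k=P_kx$.

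There is essentially no obstacle; the argument is just a careful transport of structure along $\Phi$, together with three classical facts about orthogonal decompositions (projections, Parseval, unconditional convergence of square-summable orthogonal series). The only place that needs mild care is tracking the norm constants in (i), where one should resist the temptation to collapse $\|\Phi\|\|\Phi^{-1}\|$ to~$1$.
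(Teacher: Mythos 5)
Your proof is correct and takes exactly the same route as the paper's (very terse) argument: verify all three assertions in the pairwise orthogonal case and transport them along the isomorphism $\Phi$, setting $P_k=\Phi^{-1}Q_k\Phi$. One small slip: the lower bound in your displayed inequality should read $\|\Phi\|^{-1}\|\Phi P_kx\|\leq\|P_kx\|$ rather than $\|\Phi^{-1}\|^{-1}\|\Phi P_kx\|$ (the latter fails, e.g., for $\Phi=2I$), but this does not affect the final constant $c=(\|\Phi\|\,\|\Phi^{-1}\|)^2$.
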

\begin{proof}
  The proof is immediate since all assertions hold (with $c=1$) if
  the $V_k$ are pairwise orthogonal, and they continue to hold
  (with some $c\geq1$ now) if we apply the isomorphism $\Phi$ from
  Definition~\ref{def:rbs}.
\end{proof}

For a Riesz basis of subspaces $(V_k)_{k\in\bbN}$,
the unique expansion from (iii) yields a decomposition of
the space $V$ into the subspaces $V_k$, which we denote by
\begin{equation}\label{rbs-dec}
  V=\rbsdec_{k\in \bbN}V_k.
\end{equation}
Here, the superscript $2$ indicates that, due to \eqref{rbs-ineq}, the 
original norm on $V$ is 
equivalent to the $l^2$-type norm $(\sum_{k\in\bbN}\|P_k x\|^2)^{1/2}$.

Consider now closed subspaces
$U_k\subset V_k$. Then evidently $(U_k)_{k\in\bbN}$ is a Riesz basis of
subspaces of the closed subspace generated by the $U_k$, i.e.\
\[\overline{\sum_{k\in\bbN}U_k}=\rbsdec_{k\in\bbN}U_k.\]
Analogously, for every $J\subset\bbN$ we have that $(V_k)_{k\in J}$ is a
Riesz basis of subspaces of $\bigoplus^2_{k\in J}V_k$.\footnote{
  Note here that Definition~\ref{def:rbs} implicitly covers the case
  of families with arbitrary index set $J\subset\bbN$ since $V_k=\{0\}$ is
  possible.}

\begin{definition}
  Let $(V_k)_{k\in\bbN}$ be a Riesz basis of subspaces of $V$.
  We say that a subspace $U\subset V$ is \emph{compatible} with
  $(V_k)_{k\in\bbN}$ if 
  \[U=\sideset{}{^2}\bigoplus_{k\in\bbN}U_k\quad\text{with closed subspaces }
    U_k\subset V_k.\]
  \qedup
\end{definition}

It is easy to see that, with $P_k$ as above, $U$ is compatible with
$(V_k)$ if and only if
$P_k(U)\subset U$; in this case $U=\bigoplus^2_kP_k(U)$.

If $U$ and $W$ are two subspaces of $V$ satisfying $U\cap W=\{0\}$,
we say that their sum is \emph{algebraic direct}, denoted by
$U\dotplus W$. We say that the sum is \emph{topological direct} and write
$U\oplus W$ if the associated projection from $U\dotplus W$ onto $U$ is
bounded. 
By the closed graph theorem,
if $U\cap W=\{0\}$ and $U$, $W$ and $U\dotplus W$ are closed, 
then in fact $U\oplus W$  is topological direct.

\begin{prop}\label{prop:rbs-decomp}
  Let $(V_k)_{k\in\bbN}$ be a Riesz basis of subspaces.
  \begin{itemize}
  \item[(i)] If\, $V_k=U_k\oplus W_k$ for all $k$, then the sum
    \[\rbsdec_{k\in\bbN} U_k\dotplus\rbsdec_{k\in\bbN} W_k\subset V\]
    is algebraic direct and dense.
  \item[(ii)] For $J\subset\bbN$ we have the topological direct sum
    \[V=\rbsdec_{k\in J}V_k\oplus\rbsdec_{k\in\bbN\setminus J}V_k.\]
    The associated projection onto the first component is given by
    \begin{equation}\label{rbs-proj}
      P_J:\sum_{k\in \bbN}x_k\mapsto\sum_{k\in J}x_k,
      \quad x_k\in V_k,
    \end{equation}
    and satisfies $\|P_J\|\leq c$, where $c$ is the constant from
    \eqref{rbs-ineq}.
  \end{itemize}
\end{prop}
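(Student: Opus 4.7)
\emph{The plan} is to derive both parts directly from Proposition~\ref{prop:rbs}. For (i), the two assertions split cleanly. Algebraic directness of $\rbsdec_{k\in\bbN}U_k$ and $\rbsdec_{k\in\bbN}W_k$ will follow from the uniqueness of the expansion $x=\sum_k P_k x$: any $x$ in the intersection admits expansions $x=\sum_k u_k=\sum_k w_k$ with $u_k\in U_k$, $w_k\in W_k$, and applying $P_j$ forces $u_j=P_j x=w_j\in U_j\cap W_j=\{0\}$ since $V_j=U_j\oplus W_j$. Density in turn will follow from completeness of $(V_k)$: every $v\in V_k$ splits as $v=u+w$ with $u\in U_k$, $w\in W_k$, so every finite sum from the $V_k$ already lies in $\rbsdec_{k\in\bbN}U_k+\rbsdec_{k\in\bbN}W_k$.

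For (ii), I would specialise (i) by taking $U_k=V_k$ for $k\in J$ and $U_k=\{0\}$ otherwise, with $W_k$ the complementary choice; this immediately yields algebraic directness of the two summands. To show that their sum is actually all of $V$, I expand any $x\in V$ as $x=\sum_k P_k x$ via Proposition~\ref{prop:rbs}(iii); since $\sum_k\|P_k x\|^2<\infty$ by \eqref{rbs-ineq}, Proposition~\ref{prop:rbs}(ii) guarantees that the restricted sums $\sum_{k\in J}P_k x$ and $\sum_{k\notin J}P_k x$ converge unconditionally, producing the required splitting of $x$ and simultaneously verifying formula~\eqref{rbs-proj}.

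For the norm bound $\|P_J\|\leq c$, I would apply \eqref{rbs-ineq} twice. First, the upper estimate bounds $\|P_J x\|^2$ by $c\sum_k\|P_k(P_J x)\|^2$; then, using $P_j P_k=0$ for $j\neq k$ together with $P_k^2=P_k$, one has $P_k(P_J x)=P_k x$ for $k\in J$ and $0$ otherwise; finally, the lower estimate in \eqref{rbs-ineq} turns $\sum_k\|P_k x\|^2$ into a bound by $c\|x\|^2$. Combining these yields $\|P_J\|\leq c$. Topological directness of the decomposition then follows from boundedness of $P_J$, though alternatively it is immediate from the closed graph remark preceding the proposition.

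I do not anticipate a real obstacle here; the whole argument is essentially bookkeeping around the unique expansion and the norm equivalence \eqref{rbs-ineq}. The one place at which care is needed — and where a naive approach would stumble — is the passage in (ii) from convergence of the full series $\sum_k P_k x$ to convergence of its $J$-subseries and its complement, but this is exactly what Proposition~\ref{prop:rbs}(ii) is designed to supply.
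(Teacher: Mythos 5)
Your proposal is correct and follows essentially the same route as the paper: uniqueness of the expansion in $(V_k)$ for the algebraic directness and density in (i), and a two-fold application of \eqref{rbs-ineq} (upper estimate on $P_Jx$, lower estimate on $x$) for the bound $\|P_J\|\leq c$ in (ii). The only cosmetic difference is that you route the surjectivity onto $V$ explicitly through Proposition~\ref{prop:rbs}(ii), which the paper leaves implicit.
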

\begin{proof}
  (i): Let $U=\bigoplus^2_{k}U_k$, $W=\bigoplus^2_{k}W_k$,
  and $x\in U\cap W$. We expand $x$ in the Riesz bases $(U_k)$
  of $U$ and $(W_k)$ of $W$: $x=\sum_ku_k=\sum_kw_k$
  with $u_k\in U_k$, $w_k\in W_k$. 
  As these are also expansions of $x$ in the Riesz basis
  $(V_k)$, we obtain $u_k=w_k$ and thus $u_k=0$ and $x=0$.
  The sum $U+W$ is dense in $V$ since it contains $\sum_{k\in\bbN}V_k$.

  (ii): From \eqref{rbs-ineq} we have the estimate
  \[\Big\|\sum_{k\in J}x_k\Big\|^2 \leq c\sum_{k\in J}\|x_k\|^2
    \leq c\sum_{k\in\bbN}\|x_k\|^2 \leq c^2\Big\|\sum_{k\in\bbN}x_k\Big\|^2.\]
  This shows that $P_J$ defined by \eqref{rbs-proj}  satisfies
  $\|P_J\|\leq c$. Obviously 
  \[\range(P_J)=\rbsdec_{k\in J}V_k,\quad
    \ker P_J=\rbsdec_{k\in\bbN\setminus J}V_k\]
  and hence the topological direct sum.
\end{proof}

\begin{remark}
  If $(V_k)_{k\in\bbN}$ is a Riesz basis of finite-dimensional subspaces,
  then we may choose a basis $(v_{k1},\dots,v_{kn_k})$ in each $V_k$.
  The resulting system $(v_{kj})_{k,j}$ is called a 
  \emph{Riesz basis with parentheses}:
  Every $x\in V$ has a unique representation
  \[x=\sum_{k=0}^\infty \Biggl(\sum_{j=1}^{n_k}\alpha_{kj} v_{kj}\Biggr),
    \qquad \alpha_{kj}\in\bbC,\]
  where the series over $k$ converges unconditionally.
\end{remark}

\section{Finitely spectral Riesz bases of subspaces}
\label{sec:invrbs}

We recall some concepts for a linear operator $T$ on a Banach space $V$,
see also \cite{akhiezer-glazman,kato}.
A point $z\in\bbC$ is called a \emph{point of regular type} if
$T-z$ is injective and the inverse $(T-z)^{-1}$ (defined on $\range(T-z)$)
is bounded. The set of all points of regular type is denoted by $r(T)$;
it is open and satisfies $\varrho(T)\subset r(T)$ and 
$\sigma_p(T)\cap r(T)=\varnothing$.

Let $T$ be a closed operator. A subspace $A\subset V$ is called a \emph{core} 
for $T$
if for every $x\in\mdef(T)$ there is a sequence $(x_n)$ in $A$ such that
$\lim x_n=x$ and $\lim Tx_n=Tx$.

Finally we denote by $\rsub(\lambda)$ the space of generalised eigenvectors
or root subspace of $T$
corresponding to the eigenvalue $\lambda\in\sigma_p(T)$, i.e.
\[\rsub(\lambda)=\bigcup_{k\in\bbN}\ker(T-\lambda)^k.\]
For $\lambda\not\in\sigma_p(T)$ we set $\rsub(\lambda)=\{0\}$.
A sequence $x_1,\dots,x_n\in\rsub(\lambda)$ is called a \emph{Jordan chain}
if $(T-\lambda)x_k=x_{k-1}$ for $k\geq 2$ and $(T-\lambda)x_1=0$.

\begin{definition}\label{def:invrbs}
  Let $T$ be a closed operator on a separable Hilbert space $V$.
  We say that a Riesz basis of subspaces 
  $(V_k)_{k\in\bbN}$ of $V$ is 
  \emph{finitely spectral} for $T$ if 
  each $V_k$ is finite-dimensional, $T$-invariant, 
  $V_k\subset\mdef(T)$,
  the sets $\sigma(T|_{V_k})$ are pairwise disjoint,
  and $\sum_{k\in\bbN} V_k$ is a core for $T$.
\end{definition}

\begin{prop}\label{prop:invrbs}
  Let $T$ be a closed operator with a finitely spectral Riesz basis 
  of subspaces $(V_k)_{k\in\bbN}$. Then
  \begin{align}\label{invrbs-domdef}
    \mdef(T)&=\biggl\{x=\sum_{k\in\bbN} x_k\,\bigg|\,x_k\in V_k,\,
      \sum_{k\in\bbN}\|Tx_k\|^2<\infty\biggr\}\,,\\
    Tx&=\sum_{k\in\bbN} Tx_k\quad\text{for}\quad
    x=\sum_{k\in\bbN} x_k\in\mdef(T),\, x_k\in V_k.\label{invrbs-opform}
  \end{align}
  $T$ is bounded if and only if the restrictions $T|_{V_k}$ are
  uniformly bounded and in this case (with $c$ from \eqref{rbs-ineq})
  \[\|T\|\leq c\,\sup_{k\in\bbN}\|T|_{V_k}\|.\]
\end{prop}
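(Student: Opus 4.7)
The plan is to establish the commutation identity $TP_k y = P_k Ty$ for every $y\in\mdef(T)$, since both the domain formula \eqref{invrbs-domdef} and the action \eqref{invrbs-opform} drop out almost immediately from it.

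First I would prove the commutation. On the algebraic span $\sum_{k\in\bbN} V_k$ it is obvious: a finite sum $y=\sum_{k\in F} y_k$ with $y_k\in V_k$ lies in $\mdef(T)$ because $V_k\subset\mdef(T)$, and the $T$-invariance of each $V_k$ yields $Ty=\sum_{k\in F} Ty_k$ with $Ty_k\in V_k$, so applying $P_k$ gives $P_k Ty = Ty_k = TP_k y$. By Definition~\ref{def:invrbs} the span $\sum_k V_k$ is a core for $T$, so any $y\in\mdef(T)$ admits a sequence $y^{(n)}$ in this span with $y^{(n)}\to y$ and $Ty^{(n)}\to Ty$. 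Since $P_k$ is bounded one has $P_k y^{(n)}\to P_k y$ and $TP_k y^{(n)} = P_k Ty^{(n)}\to P_k Ty$, and closedness of $T$ then delivers $P_k y\in\mdef(T)$ together with $TP_k y=P_k Ty$.

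With this in hand the inclusion $\mdef(T)\subset\{\cdot\}$ in \eqref{invrbs-domdef} is immediate: for $y\in\mdef(T)$ with $y_k:=P_k y$ the Riesz basis inequality \eqref{rbs-ineq} applied to $Ty$ gives $\sum_k\|Ty_k\|^2=\sum_k\|P_k Ty\|^2\leq c\|Ty\|^2<\infty$. For the converse, if $x=\sum_k x_k$ with $x_k\in V_k$ and $\sum_k\|Tx_k\|^2<\infty$, then since $Tx_k\in V_k$ Proposition~\ref{prop:rbs}(ii) says that $\sum_k Tx_k$ converges unconditionally; the partial sums $s_n=\sum_{k\leq n}x_k$ lie in $\mdef(T)$ with $Ts_n=\sum_{k\leq n}Tx_k$, so $s_n\to x$ and $Ts_n\to\sum_k Tx_k$, and closedness of $T$ gives $x\in\mdef(T)$ and \eqref{invrbs-opform}. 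The boundedness claim is then a short calculation: if $M:=\sup_k\|T|_{V_k}\|<\infty$, every $x=\sum_k x_k\in V$ satisfies $\sum_k\|Tx_k\|^2\leq M^2\sum_k\|x_k\|^2\leq cM^2\|x\|^2$ by \eqref{rbs-ineq}, so $\mdef(T)=V$, and a second application of \eqref{rbs-ineq} to $Tx=\sum_k Tx_k$ gives $\|Tx\|^2\leq c\sum_k\|Tx_k\|^2\leq c^2M^2\|x\|^2$, hence $\|T\|\leq cM$; the reverse estimate $\|T|_{V_k}\|\leq\|T\|$ is trivial.

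The main obstacle is the passage from the algebraic span to all of $\mdef(T)$ in the commutation step, and the core condition in Definition~\ref{def:invrbs} is precisely what makes it work. Without it one could not conclude a priori that $P_k$ preserves $\mdef(T)$, and the clean series representation of $T$ on its natural domain would not be available.
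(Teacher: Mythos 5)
Your proof is correct and follows essentially the same route as the paper: establish $TP_ky=P_kTy$ on the span by $T$-invariance, extend it to $\mdef(T)$ via the core property, then deduce both directions of \eqref{invrbs-domdef} and the boundedness estimate from the Riesz basis inequality and closedness. The only cosmetic difference is that you invoke closedness of $T$ for the limit step $TP_ky^{(n)}\to TP_ky$, whereas the paper uses the boundedness of $T|_{V_k}$ on the finite-dimensional $V_k$; both are valid.
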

\begin{proof}
Let $P_k$ be the projections onto the $V_k$ corresponding to the 
Riesz basis.

(i): We derive (\ref{invrbs-domdef}) and (\ref{invrbs-opform}).
First note that for $u\in\sum_kV_k$ we have $P_kTu=TP_ku$ for all $k$
since $u$ is a finite sum of elements from the $T$-invariant subspaces $V_k$.
Let now $y\in\mdef(T)$. Since $\sum_{k} V_k$ is a core for $T$, there 
is a sequence $y_n\in\sum_{k} V_k$ with $y_n\to y$, $Ty_n\to Ty$.
Since the restriction $T|_{V_k}$ is bounded, we obtain
\[P_kTy=\lim_{n\to\infty}P_kTy_n=\lim_{n\to\infty}T|_{V_k}P_ky_n
  =T|_{V_k}\lim_{n\to\infty}P_ky_n=TP_ky.\]
Hence $\sum_{k}\|TP_ky\|^2=\sum_{k}\|P_kTy\|^2\leq c\|Ty\|^2<\infty$ and
\begin{align*}
  y&=\sum_{k}P_ky\in
  \biggl\{x=\sum_{k}x_k
  \,\bigg|\, x_k\in V_k,\, \sum_{k}\|Tx_k\|^2<\infty\biggr\}\quad\text{with}\\
  Ty&=\sum_{k}P_kTy=\sum_{k}TP_ky.
\end{align*}
If on the other hand $x=\sum_{k}x_k$ with $x_k\in V_k$,
$\sum_{k}\|Tx_k\|^2<\infty$, then
\[\mdef(T)\ni\sum_{k=0}^nx_k\to x\quad\text{and}\quad
  T\sum_{k=0}^nx_k=\sum_{k=0}^nTx_k\to\sum_{k=0}^\infty Tx_k.\]
Hence $x\in\mdef(T)$ since $T$ is closed.

(ii): Suppose that $L=\sup_{k}\|T|_{V_k}\|<\infty$. Then for 
$x=\sum_{k}x_k\in\mdef(T)$:
\[\|Tx\|^2=\big\|\sum_{k}T|_{V_k}x_k\big\|^2\leq
    c\sum_{k}\|T|_{V_k}x_k\|^2
  \leq cL^2\sum_{k}\|x_k\|^2\leq c^2L^2\|x\|^2;\]
thus $T$ is bounded with norm $\leq c\,L$.
\end{proof}

For the case that the $V_k$ are pairwise orthogonal and possibly
infinite-dimen\-sional, 
the spectrum of an operator defined by \eqref{invrbs-domdef},
\eqref{invrbs-opform}
was calculated by Davies \cite[Theorem~8.1.12]{davies}.
We obtain:
\begin{coroll}\label{coroll:invrbs}
  Let $T$ be a closed operator with a finitely spectral Riesz basis 
  of subspaces $(V_k)_{k\in\bbN}$. Then
  \begin{align}
    \sigma_p(T)&=\bigcup_{k\in\bbN}\sigma(T|_{V_k}),\label{invrbs-point}\\
    V_k&=\sum_{\lambda\in\sigma(T|_{V_k})}\rsub(\lambda),\label{invrbs-root}\\
    \varrho(T)=r(T)&=\Bigl\{z\in\bbC\setminus\sigma_p(T)
      \,\Big|\,\sup_{k\in\bbN}\|(T|_{V_k}-z)^{-1}\|<\infty\Bigr\}.
      \label{invrbs-resolv}
  \end{align}
\end{coroll}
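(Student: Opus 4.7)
The plan is to handle the three assertions in turn, each time exploiting the expansion $x=\sum_k P_kx$ together with the commutation identity $P_kTx=T|_{V_k}P_kx$ on $\mdef(T)$ already established in the proof of Proposition~\ref{prop:invrbs}.

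For \eqref{invrbs-point}, the inclusion $\bigcup_k\sigma(T|_{V_k})\subset\sigma_p(T)$ is immediate since each $V_k$ is a finite-dimensional $T$-invariant subspace. For the converse, I would take an eigenvector $x$ of $T$ for the eigenvalue $\lambda$, apply $P_k$ to $Tx=\lambda x$ to obtain $T|_{V_k}P_kx=\lambda P_kx$, and observe that some $P_kx$ must be nonzero because $x\ne 0$.

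For \eqref{invrbs-root}, the key direction is $\rsub(\lambda)\subset V_k$ whenever $\lambda\in\sigma(T|_{V_k})$. Given $x\in\rsub(\lambda)$ with $(T-\lambda)^mx=0$, I would iterate the commutation relation to conclude $(T|_{V_j}-\lambda)^mP_jx=0$ for every $j$. Pairwise disjointness of the $\sigma(T|_{V_j})$ forces $T|_{V_j}-\lambda$ to be invertible on $V_j$ for $j\ne k$, hence $P_jx=0$ there and $x=P_kx\in V_k$. Combining with the Jordan decomposition on the finite-dimensional $V_k$ gives the stated equality.

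For \eqref{invrbs-resolv}, the inclusions $\varrho(T)\subset r(T)\subset\bbC\setminus\sigma_p(T)$ are standard. If $z\in r(T)$, then injectivity of $T-z$ and invariance of each $V_k$ make $T|_{V_k}-z$ a bijection of the finite-dimensional space $V_k$ whose inverse is the restriction of $(T-z)^{-1}$, yielding the uniform bound. For the reverse inclusion, assume $z\notin\sigma_p(T)$ and $M=\sup_k\|(T|_{V_k}-z)^{-1}\|<\infty$, and define
\[
  S\Bigl(\sum_k y_k\Bigr)=\sum_k (T|_{V_k}-z)^{-1}y_k,\qquad y_k\in V_k.
\]
The Riesz basis estimate \eqref{rbs-ineq} together with the uniform bound $M$ makes $S$ bounded on $V$. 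Using the characterisation \eqref{invrbs-domdef} of $\mdef(T)$ from Proposition~\ref{prop:invrbs}, I would then verify that $\range(S)\subset\mdef(T)$ and that $(T-z)S=I$ as well as $S(T-z)=I$.

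The main obstacle is the resolvent construction: ensuring the blockwise-defined $S$ lands in $\mdef(T)$ and is a genuine two-sided inverse of $T-z$ despite the unboundedness of $T$. Once the domain formula \eqref{invrbs-domdef} and operator formula \eqref{invrbs-opform} are invoked, the verification reduces to bookkeeping about which series converge, but without them the interchange between $T$ and the infinite sums would not be justified.
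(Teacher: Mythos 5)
Your proposal is correct and follows essentially the same route as the paper: both use the commutation $P_kT=T|_{V_k}P_k$ on $\mdef(T)$ (equivalently the expansion \eqref{invrbs-opform}) to localise root vectors in a single $V_k$ via disjointness of the $\sigma(T|_{V_k})$, and both construct the resolvent blockwise as $S=\sum_k(T|_{V_k}-z)^{-1}P_k$, checking boundedness from \eqref{rbs-ineq} and membership of $\range(S)$ in $\mdef(T)$ from \eqref{invrbs-domdef}. The only cosmetic differences are that the paper treats \eqref{invrbs-point} and \eqref{invrbs-root} in one stroke by working directly with $(T-\lambda)^n x=0$, and concludes $z\in\varrho(T)$ from $(T-z)S=I$ together with injectivity rather than verifying the two-sided inverse.
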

\begin{proof}
  For the identities \eqref{invrbs-point} and \eqref{invrbs-root},
  note that if $\lambda\in\sigma_p(T)$ and 
  $x=\sum_{j\in\bbN}x_j\in\rsub(\lambda)\setminus\{0\}$, $x_j\in V_j$, then
  by \eqref{invrbs-opform}
  \[0=(T-\lambda)^nx=\sum_{j\in\bbN}(T|_{V_j}-\lambda)^nx_j\]
  for some $n\in\bbN$, which implies $(T|_{V_j}-\lambda)^nx_j=0$ for all
  $j$. Since $x_{k}\neq 0$ for some $k$, we obtain
  $\lambda\in\sigma(T|_{V_{k}})$. As the $\sigma(T|_{V_j})$ are disjoint,
  we have $\lambda\not\in\sigma(T|_{V_j})$ and hence $x_j=0$ for $j\neq k$,
  i.e.\ $x\in V_{k}$.

  To show \eqref{invrbs-resolv}, first note that if $z\in r(T)$, then
  for every $k\in\bbN$, $(T|_{V_k}-z)^{-1}$ exists and is a restriction
  of $(T-z)^{-1}$, thus 
  \(\sup_k\|(T|_{V_k}-z)^{-1}\|\leq\|(T-z)^{-1}\|<\infty\).
  Furthermore, if $z\in\bbC\setminus\sigma_p(T)$ with
  $\sup_{k}\|(T|_{V_k}-z)^{-1}\|<\infty$, then
  \[S:\sum_{k\in\bbN}x_k\mapsto\sum_{k\in\bbN}(T|_{V_k}-z)^{-1}x_k\]
  defines a bounded operator $S:V\to V$ satisfying
  $(T-z)Sx=x$ for all $x\in V$. Consequently
  $z\in\varrho(T)$ with $(T-z)^{-1}=S$.
\end{proof}

In some situations,
the conditions on the closedness and
the core in Definition~\ref{def:invrbs} are automatically fulfilled:
\begin{prop}\label{prop:invrbs-core}
  Let $T$ be an operator on $V$, $(V_k)_{k\in\bbN}$ a Riesz basis of 
  finite-dimen\-sional,
  $T$-invariant subspaces of\, $V$, $V_k\subset\mdef(T)$ for all $k$, and
  $\sigma(T|_{V_k})$ pairwise disjoint. Then:
  \begin{itemize}
  \item[(i)] $T_0=T|_{\sum_kV_k}$ is closable and $(V_k)_{k\in\bbN}$ is 
    finitely spectral for $\overline{T_0}$.
  \item[(ii)] If\, $r(T)\neq\varnothing$, then $T$ is closable and
    $(V_k)_{k\in\bbN}$ is finitely spectral for $\overline{T}$.
  \end{itemize}
\end{prop}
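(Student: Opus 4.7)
For part (i), the plan is to check closability of $T_0$ directly from the Riesz-basis projections. Take $(x_n)$ in $\mdef(T_0) = \sum_k V_k$ with $x_n \to 0$ and $T_0 x_n \to y$. Since each $V_j$ is $T$-invariant and the projections $P_j$ of Proposition~\ref{prop:rbs} annihilate $V_k$ for $k \neq j$, we have $P_j T_0 u = T|_{V_j} P_j u$ on $\sum_k V_k$, and $T|_{V_j}$ is bounded on the finite-dimensional $V_j$. Hence $P_j y = \lim T|_{V_j} P_j x_n = 0$ for every $j$, forcing $y = 0$. The subspaces $V_k$ remain $\overline{T_0}$-invariant with the same restriction $T|_{V_k}$, the spectra $\sigma(T|_{V_k})$ are pairwise disjoint by hypothesis, and $\mdef(T_0) = \sum_k V_k$ is a core for $\overline{T_0}$ by definition of the closure; thus $(V_k)$ is finitely spectral for $\overline{T_0}$.

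For part (ii) the key claim is $T \subset \overline{T_0}$; together with $T_0 \subset T$ this makes $T$ closable with $\overline{T} = \overline{T_0}$, and (i) then supplies the finitely spectral Riesz basis. Fix $z \in r(T)$. Since $z \notin \sigma_p(T)$, each finite-dimensional restriction $T|_{V_k} - z$ is invertible on $V_k$, so $V_k \subset \range(T - z)$; by density of $\sum_k V_k$ in $V$, the range $\range(T - z)$ is dense as well. The bounded inverse $(T - z)^{-1}$ therefore extends by continuity to a bounded operator $\tilde R$ on $V$ whose restriction to each $V_k$ must coincide with $(T|_{V_k} - z)^{-1}$. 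Consequently $P_k \tilde R$ and $\tilde R P_k$ agree on the dense set $\sum_j V_j$, hence on all of $V$ by continuity.

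Given $x \in \mdef(T)$, set $y = (T - z) x$, so $\tilde R y = x$. Applying $P_k$ and using the commutation yields $P_k x = \tilde R P_k y = (T|_{V_k} - z)^{-1} P_k y$, whence $(T|_{V_k} - z) P_k x = P_k (T - z) x$, i.e., $T P_k x = P_k T x$. Proposition~\ref{prop:rbs} then gives $\sum_k \|T P_k x\|^2 = \sum_k \|P_k T x\|^2 \leq c \|T x\|^2 < \infty$, and the characterisation of $\mdef(\overline{T_0})$ in Proposition~\ref{prop:invrbs} shows $x \in \mdef(\overline{T_0})$ with $\overline{T_0} x = T x$, as needed. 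The main obstacle is extending the commutation $T P_k = P_k T$ from the core $\sum_k V_k$ to all of $\mdef(T)$: invariance handles the core trivially, but only the bounded resolvent-type extension $\tilde R$ reaches arbitrary $x$, and this is precisely the role of the hypothesis $r(T) \neq \varnothing$ in part (ii).
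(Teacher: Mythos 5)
Your proof is correct, and part (i) is the paper's argument essentially verbatim. In part (ii) you follow the same overall strategy --- reduce to showing $T\subset\overline{T_0}$ using a point $z\in r(T)$ --- but execute the key step differently. The paper expands $(T-z)x=\sum_k y_k$ in the Riesz basis, pulls each $y_k$ back through the bijection $T-z:V_k\to V_k$ to get $x_k=(T|_{V_k}-z)^{-1}y_k$, and then exhibits the explicit approximating sequence $\sum_{k=0}^n x_k\to x$, $(T_0-z)\sum_{k=0}^n x_k\to(T-z)x$, which shows $x\in\mdef(\overline{T_0})$ directly from the definition of the closure. You instead extend $(T-z)^{-1}$ by continuity to a bounded operator $\tilde R$ on all of $V$ (legitimate, since $\range(T-z)\supset\sum_k V_k$ is dense), prove that $\tilde R$ commutes with the projections $P_k$, deduce the intertwining relation $TP_kx=P_kTx$ for every $x\in\mdef(T)$, and then invoke the domain description \eqref{invrbs-domdef} of Proposition~\ref{prop:invrbs} applied to $\overline{T_0}$, which is available thanks to part (i). Both arguments are sound; yours buys the slightly stronger intermediate fact that the projections intertwine $T$ on its whole domain, at the cost of the extension-and-commutation bookkeeping, while the paper's version is more direct and never needs to extend the resolvent beyond $\range(T-z)$. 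One small point worth spelling out at the end of your argument: to conclude $\overline{T_0}x=Tx$ from $\overline{T_0}x=\sum_k TP_kx=\sum_k P_kTx$ you also need $\sum_k P_kTx=Tx$, which is Proposition~\ref{prop:rbs}(iii).
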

\begin{proof}
  (i): Let $x_n\in\mdef(T_0)=\sum_kV_k$ with $\lim x_n=0$ and $\lim T_0x_n=y$.
  As in the proof of Proposition~\ref{prop:invrbs} we have
  \[P_ky=\lim_{n\to\infty}P_kT_0x_n=\lim_{n\to\infty}T|_{V_k}P_kx_n
    =T|_{V_k}P_k\lim_{n\to\infty}x_n=0\]
  for every $k\in\bbN$ and hence $y=0$; $T_0$ is closable.
  The other assertion is now immediate.

  (ii): In view of (i) it suffices to show $T\subset\overline{T_0}$;
  for $T$ is closable then, and from $T_0\subset T$ we conclude 
  $\overline{T_0}=\overline{T}$.
  Let $x\in\mdef(T)$ and $z\in r(T)$. Using the Riesz basis $(V_k)$, we 
  have the expansion $(T-z)x=\sum_{k=0}^\infty y_k$ with $y_k\in V_k$.
  Since $T-z$ is injective and $V_k$ is finite-dimensional and
  $T$-invariant, $T-z$ maps $V_k$ onto $V_k$.
  We can thus set $x_k=(T-z)^{-1}y_k\in V_k$ and obtain
  $x=\sum_{k=0}^\infty x_k$ by the boundedness of $(T-z)^{-1}$.
  Consequently 
  \[\mdef(T_0)\ni\sum_{k=0}^nx_k\to x \qquad\text{and}\qquad
    (T_0-z)\sum_{k=0}^nx_k=\sum_{k=0}^ny_k\to(T-z)x\]
  as $n\to\infty$, i.e., $x\in\mdef(\overline{T_0})$ and
  $\overline{T_0}x=Tx$.
\end{proof}

The notion of a finitely spectral Riesz basis of subspaces contains
many other types of bases related to eigenvectors and the spectrum
as special cases:
\begin{prop}\label{prop:invrbs-relat}
  Let $T$ be closed with $r(T)\neq\varnothing$ and $\dim\rsub(\lambda)<\infty$
  for all $\lambda\in\sigma_p(T)$.
  Then for the assertions
  \begin{itemize}
  \item[(i)] $T$ has a finitely spectral Riesz basis of subspaces,
  \item[(ii)] the root subspaces $\rsub(\lambda)$ of\, $T$ form a 
    Riesz basis,
  \item[(iii)] $T$ has a Riesz basis of Jordan chains,
  \end{itemize}
  we have
  \((iii)\Rightarrow(ii)\Rightarrow(i).\)
\end{prop}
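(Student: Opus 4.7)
My plan is to carry both implications through Proposition~\ref{prop:invrbs-core}(ii), using Corollary~\ref{coroll:invrbs} as the identification tool for the first one.

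For $(iii)\Rightarrow(ii)$, the natural move is to group the given Riesz basis of Jordan chains by eigenvalue: let $W_\lambda$ be the span of those Jordan chains attached to $\lambda\in\sigma_p(T)$, which is finite-dimensional since $\dim\rsub(\lambda)<\infty$. A short calculation using the sandwich inequality \eqref{eq:rb-ineq}, applied both inside a single packet and across finite unions of packets, shows that $(W_\lambda)$ satisfies \eqref{rbs-ineq-fin} and is therefore a Riesz basis of subspaces. Each $W_\lambda$ is $T$-invariant via the Jordan chain relation $Tx_k=\lambda x_k+x_{k-1}$, lies in $\mdef(T)$, and has spectrum $\{\lambda\}$, so the $\sigma(T|_{W_\lambda})$ are pairwise disjoint. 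Feeding this into Proposition~\ref{prop:invrbs-core}(ii), together with $r(T)\neq\varnothing$ and the hypothesis that $T$ is closed, yields that $(W_\lambda)$ is finitely spectral for $T$. Identity \eqref{invrbs-root} of Corollary~\ref{coroll:invrbs} then forces $W_\lambda=\rsub(\lambda)$, which is exactly~(ii).

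For $(ii)\Rightarrow(i)$, the candidate is the family $(\rsub(\lambda))_{\lambda\in\sigma_p(T)}$ itself, which is a Riesz basis of subspaces by hypothesis. Each $\rsub(\lambda)$ is finite-dimensional by assumption, $T$-invariant, contained in $\mdef(T)$, and $\sigma(T|_{\rsub(\lambda)})=\{\lambda\}$, so the spectra are pairwise disjoint. A second application of Proposition~\ref{prop:invrbs-core}(ii), again using $r(T)\neq\varnothing$ and closedness of $T$, supplies the core condition automatically and delivers~(i).

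The main obstacle is the identification step in $(iii)\Rightarrow(ii)$: one must see that the packets $W_\lambda$ actually exhaust the entire root subspace, rather than produce some proper invariant subspace inside it. Without Corollary~\ref{coroll:invrbs} one would be forced to expand an arbitrary $v\in\rsub(\lambda)$ in the Riesz basis and argue that all components outside $W_\lambda$ vanish, using the $T$-invariance of each $\rsub(\mu)$ and the injectivity of $(T-\lambda)^k$ on $\rsub(\mu)$ for $\mu\neq\lambda$. The route above bypasses this entirely by first securing the finitely spectral property via Proposition~\ref{prop:invrbs-core}(ii) and then quoting the corollary.
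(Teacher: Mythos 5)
Your proof is correct and follows the same strategy as the paper: for $(iii)\Rightarrow(ii)$ it groups the Jordan chains of the basis by eigenvalue into finite-dimensional packets and shows these form a Riesz basis of subspaces coinciding with the root subspaces, and it treats $(ii)\Rightarrow(i)$ as a direct verification of Definition~\ref{def:invrbs}. The only difference is one of detail: where the paper simply asserts $V_\lambda=\rsub(\lambda)$ and calls $(ii)\Rightarrow(i)$ trivial, you justify both steps explicitly via Proposition~\ref{prop:invrbs-core}(ii) and identity \eqref{invrbs-root} of Corollary~\ref{coroll:invrbs}, which is a legitimate way to supply the omitted arguments (including the core condition).
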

\begin{proof}
  (ii)$\Rightarrow$(i) is trivial. For (iii)$\Rightarrow$(ii) consider
  for each eigenvalue $\lambda\in\sigma_p(T)$ the subspace
  $V_\lambda$ generated by all Jordan chains from the basis which
  correspond to $\lambda$.
  Then $(V_\lambda)_{\lambda\in\sigma_p(T)}$ is
  a Riesz basis of subspaces and
  $V_\lambda=\rsub(\lambda)$.
\end{proof}

\begin{remark}\label{rem:invrbs-relat}
  In the situation of the previous proposition,  assertion (i)
  is equivalent to the existence of
  a \emph{Riesz basis with parentheses of Jordan chains} with the 
  additional property that Jordan chains corresponding to the
  same eigenvalue lie inside the same parenthesis.

  If $T$ has a compact resolvent, then (ii) holds if and only if
  $T$ is a \emph{spectral operator} in the sense of 
  Dunford, see \cite{dunford-schwartz3,wyss-phd}.

  A closed operator $T$ is called 
  \emph{Riesz-spectral} \cite{curtain-zwart,kuiper-zwart} if all its
  eigenvalues are simple, $T$ has a Riesz basis of eigenvectors, and 
  $\overline{\sigma_p(T)}$ is totally disconnected. So if $T$ is
  Riesz-spectral then (iii) holds.
\end{remark}

For an operator $G$ let $N(r,G)$ be the sum of the algebraic multiplicities
$\dim\rsub(\lambda)$ for all $\lambda\in\sigma_p(G)$ with $|\lambda|\leq r$.
An operator $S$ is called \emph{$p$-subordinate} to $G$ with $0\leq p\leq 1$
if $\mdef(G)\subset\mdef(S)$ and there exists $b\geq 0$ such that
\[\|Sx\|\leq b\|x\|^{1-p}\|Gx\|^p \quad\text{for}\quad
  x\in\mdef(G).\]
\begin{theo}
  \label{theo:psubpert-invrbs}
  Let $G$ be a normal operator with compact resolvent whose eigenvalues lie on
  a finite number of rays $e^{i\theta_j}\bbR_{\geq0}$, $0\leq\theta_j<2\pi$,
  from the origin. 
  Let $S$ be
  $p$-subordinate  to $G$ with $0\leq p<1$.
  If
  \[\liminf_{r\to\infty}\frac{N(r,G)}{r^{1-p}}<\infty,\]
  then $T=G+S$ has a compact resolvent and a finitely spectral
  Riesz basis of subspaces $(V_k)_{k\in\bbN}$.
\end{theo}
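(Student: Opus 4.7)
The plan is to follow the classical Markus--Matsaev perturbation approach for non-selfadjoint operators with discrete spectrum. First I would verify that $T=G+S$ is closed with compact resolvent. Since $S$ is $p$-subordinate with $p<1$, Young's inequality applied to $\|Sx\|\le b\|x\|^{1-p}\|Gx\|^p$ yields the standard bound $\|Sx\|\le\eps\|Gx\|+C_\eps\|x\|$ for every $\eps>0$, so $S$ is $G$-bounded with relative bound zero. Hence $T$ is closed on $\mdef(G)$, and since $G$ has compact resolvent so does $T$; in particular $\sigma(T)=\sigma_p(T)$ is discrete with finite algebraic multiplicities and $r(T)\neq\varnothing$, which means the core assumption in Definition~\ref{def:invrbs} will follow automatically from Proposition~\ref{prop:invrbs-core}(ii).

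Next I would construct an expanding sequence of closed contours $\Gamma_n\subset\bbC$ that avoid $\sigma(G)$ and on which the perturbation $\|S(G-z)^{-1}\|$ is small. Normality of $G$ gives $\|(G-z)^{-1}\|=1/\dist(z,\sigma(G))$, and applying $p$-subordination to $y=(G-z)^{-1}x$ produces a bound of the form
\[\|S(G-z)^{-1}\|\le b\,\|(G-z)^{-1}\|^{1-p}\bigl(1+|z|\,\|(G-z)^{-1}\|\bigr)^{p}.\]
Because the eigenvalues of $G$ lie on finitely many rays from the origin, the complement of $\sigma(G)$ contains wedge-shaped regions, and a pigeonhole argument using the growth hypothesis $\liminf_r N(r,G)/r^{1-p}<\infty$ selects radii $r_n\to\infty$ for which $\dist(z,\sigma(G))\ge c\,r_n^{p}$ holds along suitably chosen contours $\Gamma_n$ based on circles $|z|=r_n$ (completed by arcs staying between the rays). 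On such $\Gamma_n$ the displayed estimate gives $\|S(G-z)^{-1}\|\to0$ uniformly, so for $n$ large $\Gamma_n\subset\varrho(T)$ and the Neumann series yields $(T-z)^{-1}=(G-z)^{-1}(I+S(G-z)^{-1})^{-1}$ uniformly bounded on $\Gamma_n$.

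I would then introduce the Riesz projections obtained from $\Gamma_n$ and $\Gamma_{n-1}$, namely
\[P_n=\frac{1}{2\pi i}\oint_{\Gamma_{n-1}}(T-z)^{-1}dz-\frac{1}{2\pi i}\oint_{\Gamma_n}(T-z)^{-1}dz,\]
so that each $P_n$ is a bounded projection onto a finite-dimensional $T$-invariant subspace $V_n\subset\mdef(T)$, with $\sigma(T|_{V_n})$ contained in the annular region between $\Gamma_{n-1}$ and $\Gamma_n$; in particular the $\sigma(T|_{V_n})$ are pairwise disjoint. Let $Q_n$ be the orthogonal spectral projection of the normal operator $G$ for the same annular region; the $(Q_n)$ yield an orthogonal Hilbert space decomposition of $V$. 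The resolvent identity $(T-z)^{-1}-(G-z)^{-1}=-(T-z)^{-1}S(G-z)^{-1}$ expresses $P_n-Q_n$ as a contour integral that can be estimated by the $p$-subordination bound.

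The main obstacle is establishing the Riesz-basis-of-subspaces inequality \eqref{rbs-ineq-fin} together with completeness of $(V_n)_{n\in\bbN}$. For the basis inequality I would use a Bari--Markus type criterion: compare the projections $P_n$ to the orthogonal $Q_n$ and show that the differences are small enough in an appropriate quadratic sense, the precise estimate being driven by the exponent $1-p$ in the growth hypothesis. For completeness I would invoke a Keldysh-type theorem on completeness of the root subspaces of a $p$-subordinate perturbation of a normal operator with eigenvalues on finitely many rays under exactly the assumed counting bound on $N(r,G)$. The growth hypothesis $N(r,G)=O(r^{1-p})$ is calibrated precisely so that both the resolvent estimate on $\Gamma_n$ and the Bari--Markus summability hold. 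Once these two are in hand, all conditions of Definition~\ref{def:invrbs} are verified (the core property coming from Proposition~\ref{prop:invrbs-core}(ii)), completing the proof.
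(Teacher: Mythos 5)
Your outline reproduces the Markus--Matsaev strategy that the cited reference [wyss-psubpert] actually follows (the paper itself proves this theorem only by citing Theorems~4.5 and~6.1 there): relative bound zero, localisation of $\sigma(T)$ near the rays, Riesz projections over separating contours, a Bari--Markus estimate $\sum_n\|P_n-Q_n\|^2<\infty$ for the basis property, and Keldysh for completeness. Most of these steps are the right ones. But there is one quantitative claim that is false as stated, and it sits at the centre of the argument.

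You assert that on contours through spectral gaps of $G$ of size $\dist(z,\sigma(G))\geq c\,r_n^{p}$ one gets $\|S(G-z_n)^{-1}\|\to 0$. Plug this into your own displayed bound: $\|(G-z)^{-1}\|^{1-p}\leq (c r^p)^{-(1-p)}=c^{p-1}r^{-p(1-p)}$ while $\bigl(1+|z|\,\|(G-z)^{-1}\|\bigr)^p\leq\bigl(1+c^{-1}r^{1-p}\bigr)^p\sim c^{-p}r^{p(1-p)}$; the powers of $r$ cancel exactly and you are left with $\|S(G-z)^{-1}\|\lesssim b/c$, a constant. Moreover the pigeonhole argument from $\liminf N(r,G)/r^{1-p}=K<\infty$ only produces gaps with $c\sim 1/K$ (take eigenvalues on a ray spaced $t^p/(2K)$ apart: the hypothesis holds, yet no gap near radius $r$ exceeds $r^p/(2K)$), so the best you get on the crossing segments is $\|S(G-z)^{-1}\|\lesssim bK$, which need not be $<1$. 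Hence the Neumann series does not establish $\Gamma_n\subset\varrho(T)$, and everything downstream (the projections $P_n$, the Bari--Markus comparison) is not yet defined. The Neumann-series mechanism does work far from the rays, where $\dist(z,\sigma(G))\gtrsim|z|$ gives $\|S(G-z)^{-1}\|\lesssim|z|^{-(1-p)}\to0$ (this is how one gets $\varrho(T)\neq\varnothing$ and the compact resolvent), and it works across the rays under an explicit large-gap hypothesis such as $r_{k+1}-r_k\geq 2l$ with $l>b$ in Theorem~\ref{theo:ham-rbeigvec}; but under the bare $\liminf$ condition the hard content of the cited proof is precisely to show that the crossing segments nonetheless lie in $\varrho(T)$ with uniform resolvent bounds, which is done by an analytic-function/maximum-principle argument controlling the poles of $(T-z)^{-1}$ in the gap via the eigenvalue count, not by making $S(G-z)^{-1}$ a contraction. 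Without supplying that argument (or strengthening the hypothesis), the proof is incomplete.
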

\begin{proof}
  See Theorems~4.5 and~6.1 in \cite{wyss-psubpert}.
  In particular, note that the $V_k$ were constructed as the ranges of
  Riesz projections associated with disjoint parts of $\sigma(T)$, and hence
  the $\sigma(T|_{V_k})$ are disjoint.
\end{proof}

Now we study invariant subspaces with respect to a finitely spectral
Riesz basis of subspaces.
\begin{lemma}\label{lem:invcomp}
  Let $T$ be a closed operator with a finitely spectral Riesz basis
  of subspaces $(V_k)_{k\in\bbN}$. For a compatible subspace
  $U=\bigoplus^2_{k\in\bbN}U_k$, $U_k\subset V_k$, the following
  assertions are equivalent:
  \begin{itemize}
  \item[(i)] $U$ is $T$-invariant;
  \item[(ii)] all $U_k$ are $T$-invariant.
  \end{itemize}
  For $z\in\varrho(T)$, (i) and (ii) are equivalent to
  \begin{itemize}
  \item[(iii)] $U$ is $(T-z)^{-1}$-invariant.
  \end{itemize}
\end{lemma}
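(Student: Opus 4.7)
The plan is to prove the chain (ii)$\Rightarrow$(i)$\Rightarrow$(ii), and then treat (iii) separately by running the same scheme for the bounded operator $(T-z)^{-1}$, reducing everything to finite-dimensional linear algebra on each $V_k$. The two structural facts that will do all the work are the operator formula \eqref{invrbs-opform} from Proposition~\ref{prop:invrbs} and the identity $U\cap V_k = P_k(U) = U_k$, which is immediate from the uniqueness of the Riesz basis expansion applied to $x\in V_k\cap U$.

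For (ii)$\Rightarrow$(i), take $x=\sum_k x_k\in U\cap\mdef(T)$ with $x_k\in U_k$. Since $U_k\subset V_k$ and $(V_k)$ is a Riesz basis of subspaces, the $x_k$ are exactly the components of $x$ with respect to $(V_k)$; by \eqref{invrbs-domdef}--\eqref{invrbs-opform} we have $\sum_k\|Tx_k\|^2<\infty$ and $Tx=\sum_k Tx_k$. Each $Tx_k$ lies in $U_k$ by hypothesis, so $Tx\in\bigoplus^2_k U_k=U$. For (i)$\Rightarrow$(ii), fix $x_k\in U_k\subset V_k\subset\mdef(T)$; then $x_k\in U\cap\mdef(T)$, so $Tx_k\in U$. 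But $V_k$ is $T$-invariant, hence $Tx_k\in V_k$, and therefore $Tx_k\in U\cap V_k=U_k$.

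Now assume $z\in\varrho(T)$. Using Corollary~\ref{coroll:invrbs} we know $z\notin\sigma(T|_{V_k})$ for every $k$, and the resolvent is given by $(T-z)^{-1}\bigl(\sum_k x_k\bigr)=\sum_k (T|_{V_k}-z)^{-1}x_k$. Repeating the above argument with $(T-z)^{-1}$ in place of $T$ (now everything is bounded, so no domain issues arise), the equivalence of (iii) and the auxiliary statement
\begin{center}
(ii$'$)\quad all $U_k$ are $(T|_{V_k}-z)^{-1}$-invariant
\end{center}
follows verbatim. Finally, (ii)$\Leftrightarrow$(ii$'$) is pure finite-dimensional linear algebra: on the finite-dimensional space $V_k$, the resolvent $(T|_{V_k}-z)^{-1}$ is a polynomial in $T|_{V_k}$ (for instance by Cayley--Hamilton), and conversely $T|_{V_k}$ is a polynomial in $(T|_{V_k}-z)^{-1}$ after inversion, so a subspace of $V_k$ is invariant under one precisely when it is invariant under the other.

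I do not anticipate a genuine obstacle: the only point that might look delicate is that, for general closed operators, $T$-invariance of a closed subspace need not imply $(T-z)^{-1}$-invariance. What makes this work here is the compatibility assumption, which reduces the question to the finite-dimensional summands $V_k$, where the two notions coincide. The small care required is to use the uniqueness of the Riesz basis expansion at the right places, namely to identify $U\cap V_k$ with $U_k$ and to pass freely between a series in $U$ and its componentwise decomposition.
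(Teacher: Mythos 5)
Your proposal is correct and follows essentially the same route as the paper, which disposes of (i)$\Leftrightarrow$(ii) by appealing to Proposition~\ref{prop:invrbs} (in particular \eqref{invrbs-opform}) and of (iii) by noting that on the finite-dimensional, $T$-invariant summands $U_k\subset\mdef(T)$ the notions of $T$-invariance and $(T-z)^{-1}$-invariance coincide. You have merely written out the details the paper leaves implicit (the identification $U\cap V_k=U_k$ via uniqueness of the expansion, and the componentwise action of the resolvent from Corollary~\ref{coroll:invrbs}), all of which check out.
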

\begin{proof}
  The claim is immediate from Proposition~\ref{prop:invrbs},
  in particular \eqref{invrbs-opform}.
  For $z\in\varrho(T)$ note that $\dim U_k<\infty$ and $U_k\subset\mdef(T)$
  imply that 
  $U_k$ is $T$-invariant if and only if $U_k$ is $(T-z)^{-1}$-invariant.
\end{proof}

\begin{coroll}\label{coroll:invcomp}
  The subspace $U$ is $T$-invariant and compatible with $(V_k)_{k\in\bbN}$
  if and only if 
  \begin{equation}\label{eq:invcomp-var}
    U=\overline{\sum_{\lambda\in\sigma_p(T)}W_\lambda}
  \end{equation}
  with $T$-invariant subspaces $W_\lambda\subset\rsub(\lambda)$.
  In particular, for $\sigma\subset\sigma_p(T)$ we obtain the compatible subspace
  \begin{equation}\label{eq:assoccomp}
    U_\sigma=\overline{\sum_{\lambda\in\sigma}\rsub(\lambda)}
  \end{equation}
  associated with $\sigma$.
\end{coroll}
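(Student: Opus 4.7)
The plan is to combine Lemma~\ref{lem:invcomp} with the root subspace decomposition of each $V_k$ provided by Corollary~\ref{coroll:invrbs}. In particular, identity~\eqref{invrbs-point} shows $\sigma_p(T)=\bigcup_{k\in\bbN}\sigma(T|_{V_k})$, and since the $\sigma(T|_{V_k})$ are pairwise disjoint, this union is actually a disjoint union; in particular $\sigma_p(T)$ is countable. Moreover, every root subspace $\rsub(\lambda)$ is finite-dimensional and contained in the unique $V_k$ with $\lambda\in\sigma(T|_{V_k})$, and \eqref{invrbs-root} gives $V_k=\bigoplus_{\lambda\in\sigma(T|_{V_k})}\rsub(\lambda)$ as an algebraic direct sum (the $\rsub(\lambda)$ are the spectral subspaces of the finite-dimensional operator $T|_{V_k}$).

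For the forward implication, let $U$ be $T$-invariant and compatible. By Lemma~\ref{lem:invcomp}, $U=\bigoplus^2_{k\in\bbN}U_k$ with each $U_k\subset V_k$ a finite-dimensional $T$-invariant subspace. Since $T|_{V_k}$ is the direct sum of its restrictions to the $\rsub(\lambda)$, every $T$-invariant subspace of $V_k$ splits accordingly: setting $W_\lambda=U_k\cap\rsub(\lambda)$ for $\lambda\in\sigma(T|_{V_k})$ (and $W_\lambda=\{0\}$ for $\lambda$ not lying in $\sigma(T|_{V_k})$ for any $k$, which cannot occur for $\lambda\in\sigma_p(T)$), one obtains $U_k=\bigoplus_{\lambda\in\sigma(T|_{V_k})}W_\lambda$ with each $W_\lambda\subset\rsub(\lambda)$ being $T$-invariant. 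Summing over $k$ yields $\sum_{k\in\bbN}U_k=\sum_{\lambda\in\sigma_p(T)}W_\lambda$, so taking closures gives the representation~\eqref{eq:invcomp-var}.

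For the reverse implication, given $T$-invariant subspaces $W_\lambda\subset\rsub(\lambda)$, group them according to the unique $k$ with $\lambda\in\sigma(T|_{V_k})$ and define
\[U_k=\sum_{\lambda\in\sigma(T|_{V_k})}W_\lambda\subset V_k.\]
Each $U_k$ is finite-dimensional, hence closed, and $T$-invariant. Since the family $(U_k)_{k\in\bbN}$ inherits the Riesz basis property from $(V_k)_{k\in\bbN}$ (cf.\ the discussion preceding the definition of compatibility), we have
\[\overline{\sum_{\lambda\in\sigma_p(T)}W_\lambda}
  =\overline{\sum_{k\in\bbN}U_k}=\rbsdec_{k\in\bbN}U_k,\]
which is compatible with $(V_k)$; by Lemma~\ref{lem:invcomp} it is also $T$-invariant. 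The final claim follows by choosing $W_\lambda=\rsub(\lambda)$ for $\lambda\in\sigma$ and $W_\lambda=\{0\}$ otherwise.

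I do not expect any real obstacle: the only point requiring care is to check that the summation indexed over $\sigma_p(T)$ reorganises cleanly into a sum over $k$, which is guaranteed by the disjointness of the spectra $\sigma(T|_{V_k})$ and the finite-dimensionality of each $V_k$.
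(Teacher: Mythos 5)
Your proof is correct and follows essentially the same route as the paper: decompose each $U_k$ along the spectral decomposition $V_k=\bigoplus_{\lambda\in\sigma(T|_{V_k})}\rsub(\lambda)$ using finite-dimensionality for the forward direction, and regroup the $W_\lambda$ into $U_k$'s for the converse. The extra details you supply (that $W_\lambda=U_k\cap\rsub(\lambda)$ works because the spectral projections of $T|_{V_k}$ preserve the invariant subspace $U_k$, and that the disjointness of the $\sigma(T|_{V_k})$ makes the regrouping unambiguous) are exactly what the paper leaves implicit.
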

\begin{proof}
  If $U=\bigoplus^2_kU_k$ with $U_k\subset V_k$ $T$-invariant, then,
  since $\dim U_k<\infty$,
  \begin{equation}\label{eq:invcomp-fine}
    U_k=\sum_{\lambda\in\sigma(T|_{V_k})}W_\lambda
  \end{equation}
  with $W_\lambda\subset\rsub(\lambda)$ $T$-invariant; consequently
  \eqref{eq:invcomp-var}. 
  On the other hand, if $U$ is given by \eqref{eq:invcomp-var}, and we define
  $U_k$ by \eqref{eq:invcomp-fine}, then $U_k$ is $T$-invariant,
  $U_k\subset V_k$, and we obtain $U=\bigoplus^2_kU_k$.
\end{proof}

In the following, we will use the  notation $\sigma_p^i(T)$,
$\sigma_p^+(T)$ and $\sigma_p^-(T)$ for the set of eigenvalues of $T$
on the imaginary axis and in the open right and left half-plane, respectively.
\begin{remark}\label{rem:dichot}
  Let $T$ be a closed operator with a finitely spectral Riesz basis
  of subspaces $(V_k)_{k\in\bbN}$,
  $\sigma_p^i(T)=\varnothing$, and consider the invariant compatible
  subspaces $U_\pm$
  associated with $\sigma_p^\pm(T)$. We have $U_\pm=\bigoplus^2_kV_k^\pm$
  where $V_k=V_k^+\oplus V_k^-$
  and $V_k^\pm$ are the spectral subspaces of $T|_{V_k}$ corresponding to
  the right and left half-plane.
  Hence $U_+\dotplus U_-\subset V$ algebraic direct and dense by
  Proposition~\ref{prop:rbs-decomp}.
  For the operator $T$ in Example~\ref{ex:unbndsol},
  the sum is in fact not topological direct; in particular
  $U_+\dotplus U_-\subsetneqq V$.
  
  On the other hand, if an operator $T$ is \emph{dichotomous}
  (see \cite{langer-ran-rotten}),  then 
  a strip around the imaginary axis belongs to $\varrho(T)$, and
  there is a topological direct decomposition $V=V_+\oplus V_-$ 
  such that $V_\pm$ is $T$-invariant and $\sigma(T|_{V_\pm})$ is contained
  in the right and left half-plane, respectively.
  In particular $U_\pm\subset V_\pm$. Consequently the operator in
  Example~\ref{ex:unbndsol} is not dichotomous.

\end{remark}

\begin{lemma}
  Let $T$ be an operator on $V$, $z_0\in\varrho(T)$ and $U\subset V$ a
  closed $(T-z_0)^{-1}$-invariant subspace. Then $U$ is $(T-z)^{-1}$-invariant
  for all $z$ in the connected component of $z_0$ in $\varrho(T)$.
\end{lemma}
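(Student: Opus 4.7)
The plan is a standard clopen-in-$\varrho(T)$ connectedness argument. Let
\[M=\bigl\{z\in\varrho(T)\,\big|\,U\text{ is }(T-z)^{-1}\text{-invariant}\bigr\},\]
so that $z_0\in M$, and I want to show that $M$ is both open and closed in $\varrho(T)$; since $\varrho(T)$ is locally connected, this forces $M$ to contain the whole connected component of $z_0$.

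For openness, I would fix $z_1\in M$ and expand the resolvent in a Neumann series around $z_1$:
\[(T-z)^{-1}=\sum_{n=0}^\infty (z-z_1)^n(T-z_1)^{-(n+1)},\]
which converges in operator norm for $|z-z_1|<\|(T-z_1)^{-1}\|^{-1}$. An easy induction shows that if $U$ is $(T-z_1)^{-1}$-invariant, then $U$ is $(T-z_1)^{-n}$-invariant for every $n\geq 1$; hence every partial sum of the series maps $U$ into $U$, and since $U$ is closed the limit $(T-z)^{-1}$ does as well. Thus an open neighbourhood of $z_1$ in $\varrho(T)$ lies in $M$.

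For closedness in $\varrho(T)$, I would take a sequence $z_n\in M$ with $z_n\to z\in\varrho(T)$. By the standard norm-continuity of the resolvent on $\varrho(T)$ we have $(T-z_n)^{-1}\to (T-z)^{-1}$ in operator norm. For any $x\in U$ the vectors $(T-z_n)^{-1}x$ lie in $U$ and converge to $(T-z)^{-1}x$; since $U$ is closed we obtain $(T-z)^{-1}x\in U$, so $z\in M$. Combining the two steps, $M$ is a nonempty clopen subset of $\varrho(T)$ containing $z_0$, which completes the proof.

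I do not expect any serious obstacle here; the only point requiring a moment of care is the inductive step that $(T-z_1)^{-1}$-invariance of $U$ implies $(T-z_1)^{-n}$-invariance, but this is immediate from the definition once one notes that iterating $(T-z_1)^{-1}$ on $U$ stays in $U$.
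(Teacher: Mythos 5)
Your proof is correct and follows essentially the same route as the paper: the clopen set of good $z$, openness via the Neumann series for the resolvent together with the closedness of $U$, and relative closedness via norm-continuity of $z\mapsto(T-z)^{-1}$. No issues.
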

\begin{proof}
  It suffices to show that the set
  \[A=\{z\in\varrho(T)\,|\,U \text{ is $(T-z)^{-1}$-invariant}\}\]
  is relatively open and closed in $\varrho(T)$.
  Let $z\in A$. For small $|w-z|$ a Neumann series argument shows that
  \begin{align*}
    (T-w)^{-1}&=(T-z)^{-1}\bigl(I-(w-z)(T-z)^{-1}\bigr)^{-1}
    =\sum_{k=0}^\infty (w-z)^k(T-z)^{-k-1}
  \end{align*}
  If $x\in U$, then $(T-z)^{-k-1}x\in U$ for all $k\geq 0$. Hence also
  $(T-w)^{-1}x\in U$, i.e.\ $w\in A$; $A$ is an open set.

  Now let $w\in\varrho(T)$ with $w=\lim_{n\to\infty} z_n$, $z_n\in A$.
  For $x\in U$ we then have
  \[U\ni (T-z_n)^{-1}x\to(T-w)^{-1}x\in U
    \quad\text{as}\quad n\to\infty\]
  since the resolvent $(T-z)^{-1}$ is continuous in $z$.
  Hence $w\in A$, i.e., $A$ is relatively closed.
\end{proof}

\begin{prop}\label{prop:invcomp}
  Let $T$ be an operator with compact resolvent and a finitely spectral
  Riesz basis of subspaces $(V_k)_{k\in\bbN}$. If\, $U$ is a closed subspace
  which is $(T-z)^{-1}$-invariant for some $z\in\varrho(T)$, then
  $U$ is $T$-invariant and compatible with $(V_k)_{k\in\bbN}$.
\end{prop}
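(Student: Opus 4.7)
The idea is to identify the projections $P_k$ from Proposition~\ref{prop:rbs}(i) with Riesz projections of $T$ and thereby transfer the $(T-z)^{-1}$-invariance of $U$ to invariance under each $P_k$. First I would note that compactness of the resolvent forces $\sigma(T)=\sigma_p(T)$ to be a discrete subset of $\bbC$ with no finite accumulation point; hence $\varrho(T)=\bbC\setminus\sigma_p(T)$ is connected, and the preceding lemma promotes the hypothesis to $(T-w)^{-1}$-invariance of $U$ for every $w\in\varrho(T)$.

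For each fixed $k$ I would choose a piecewise smooth Jordan contour $\Gamma_k\subset\varrho(T)$ that encloses the finite set $\sigma(T|_{V_k})$ and no other points of $\sigma(T)$; such $\Gamma_k$ exists because the sets $\sigma(T|_{V_j})$ are pairwise disjoint by Definition~\ref{def:invrbs} and have no finite accumulation. Set $Q_k=\frac{1}{2\pi i}\oint_{\Gamma_k}(z-T)^{-1}\,dz$. Using the description of the resolvent from Corollary~\ref{coroll:invrbs} (namely that $(z-T)^{-1}$ leaves each $V_j$ invariant and acts there as $(z-T|_{V_j})^{-1}$) together with Cauchy's theorem applied on each finite-dimensional $V_j$, the contributions with $j\neq k$ vanish while the $j=k$ contribution produces the identity on $V_k$. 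This yields $Q_k=P_k$; the interchange of series and contour integration is justified by the uniform boundedness of $(z-T)^{-1}$ on the compact set $\Gamma_k$.

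With $P_k$ realised as a contour integral, compatibility is immediate: for any $x\in U$ and $z\in\Gamma_k$ we have $(z-T)^{-1}x\in U$, so the closed subspace $U$ also contains $P_kx$. Thus $P_k(U)\subset U$ for every $k$, and $U=\rbsdec_{k\in\bbN}U_k$ with $U_k=P_k(U)\subset V_k$. By uniqueness of the Riesz-basis-of-subspaces expansion one checks $U_k=U\cap V_k$; since both $U$ and $V_k$ are $(T-z)^{-1}$-invariant (the latter because it is finite-dimensional, $T$-invariant and contained in $\mdef(T)$), so is $U_k$. Lemma~\ref{lem:invcomp} then delivers the $T$-invariance of each $U_k$, and hence of $U$.

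The main obstacle is the identification $Q_k=P_k$, which reconciles the spectral (Riesz-projection) description of the invariant subspaces with their role as members of a Riesz basis of subspaces. All the other ingredients — connectedness of $\varrho(T)$, closedness of $U$, and the elementary algebra of finite-dimensional invariant subspaces — enter in a routine manner.
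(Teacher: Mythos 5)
Your proposal is correct and follows essentially the same route as the paper: use connectedness of $\varrho(T)$ (via the preceding lemma) to get $(T-w)^{-1}$-invariance for all $w$, identify the basis projections $P_k$ with Riesz projections over contours around $\sigma(T|_{V_k})$, deduce $P_k(U)\subset U$, and finish with Lemma~\ref{lem:invcomp}. The only difference is that you justify the identification $Q_k=P_k$ by hand (both are bounded and agree on the dense subspace $\sum_j V_j$), whereas the paper simply cites the standard Riesz-projection result.
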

\begin{proof}
  Since $T$ has a compact resolvent,  $\sigma(T)$ consists
  of isolated eigenvalues only and  $\varrho(T)$ is
  connected. The previous lemma thus implies that $U$ is 
  $(T-z)^{-1}$-invariant for all $z\in\varrho(T)$.
  Let $P_k$ be the projections corresponding to the Riesz basis.
  Since $\sigma_k=\sigma(T|_{V_k})$ is an isolated part of the spectrum,
  $P_k$ is the Riesz projection associated with $\sigma_k$, i.e.
  \begin{equation}\label{rieszproj}
    P_k=\frac{i}{2\pi}\int_{\Gamma_k}(T-z)^{-1}dz
  \end{equation}
  where $\Gamma_k$ is a simply closed, positively oriented integration
  contour with $\sigma_k$ in its interior and $\sigma(T)\setminus\sigma_k$
  in its exterior, see e.g.\ \cite[Theorem~III.6.17]{kato}.
  Consequently $P_k(U)\subset U$, and $U$ is thus
  compatible with $(V_k)$. 
  $T$-invariance is now a consequence of Lemma~\ref{lem:invcomp}.
\end{proof}

\section{Hamiltonian operator matrices}
\label{sec:ham}

We use the following definition of a Hamiltonian operator matrix,
see also \cite{azizov-dijksma-gridneva}.

\begin{definition}\label{def:hamopmat}
  Let $H$ be a Hilbert space. A 
  \emph{Hamiltonian operator matrix}
  is a block operator matrix
  \begin{equation*}
    T=\pmat{A&B\\C&-A^*}, \qquad
    \mdef(T)=(\mdef(A)\cap\mdef(C))\times(\mdef(A^*)\cap\mdef(B))
  \end{equation*}
  acting on $H\times H$ with densely defined linear operators 
  $A$, $B$, $C$ on $H$ such that
  $B$ and $C$ are symmetric and $T$ is densely defined.

  If $B$ and $C$ are both nonnegative (positive, uniformly positive),
  then $T$ is called a \emph{nonnegative} (\emph{positive, uniformly positive,}
  respectively) Hamiltonian operator matrix.\footnote{
    Note that the sign convention 
    $T=\left(\begin{smallmatrix}A&-B\\-C&-A^*\end{smallmatrix}\right)$,
    in particular with nonnegative $B,C$, is also used
    in the literature, e.g.\ in \cite{kuiper-zwart,langer-ran-rotten}.
  }
\end{definition}

Hamiltonian operator matrices are connected to two indefinite inner
products on $H\times H$.
We recall some corresponding notions,
see \cite{azizov-iokhvidov,bognar} for more details:
A vector space $V$ together with an inner product
$\braket{\cdot}{\cdot}$ is called a \emph{Krein space}
if $V$ is also a Hilbert space with scalar product $(\cdot|\cdot)$
and there is a selfadjoint involution
$J:V\to V$ such that 
$\braket{x}{y}=(Jx|y)$ for all $x,y\in V$.

A subspace $U\subset V$ is called \emph{neutral} if 
$\braket{x}{x}=0$ for all $x\in U$.
The \emph{orthogonal complement} of $U$ is defined by
\[U^\Jorth=\{x\in V\,|\,\braket{x}{y}=0 \text{ for all } y\in U\}.\]
Two subspaces $U,W\subset V$ are said to be \emph{orthogonal},
$U\Jorth W$, if $W\subset U^\Jorth$.
$U$ is neutral if and only if $U\subset U^\Jorth$.
The subspace $U$ is called
\emph{non-degenerate} if $U\cap U^\Jorth=\{0\}$.

Let $T$ be a densely defined operator on $V$. It is called 
\emph{symmetric} if $\braket{Tx}{y}=\braket{x}{Ty}$ for all $x,y\in \mdef(T)$.
The \emph{adjoint} of $T$ is defined as the maximal operator $T^\Jadj$ such
that
\[\braket{Tx}{y}=\braket{x}{T^\Jadj y} \quad\text{for all}\quad
  x\in\mdef(T),\,y\in\mdef(T^\Jadj).\]
$T$ is called \emph{selfadjoint} if $T=T^\Jadj$, and in this case
its spectrum $\sigma(T)$ is symmetric with respect to the real axis.

Consider the Krein space inner products on $H\times H$ given by
\begin{equation*}
  \braket{x}{y}=(J_1x|y)\quad\text{with}\quad J_1=\pmat{0&-iI\\iI&0}
\end{equation*}
and
\begin{equation*}
  [x|y]=(J_2x|y) \quad\text{with}\quad J_2=\pmat{0&I\\I&0}.
\end{equation*}
Here $(\cdot|\cdot)$ denotes the usual scalar product on $H\times H$.
The straightforward computation
\begin{align*}
  \Braket{\pmat{A&B\\C&-A^*}\pmat{u\\v}}{\pmat{\tilde{u}\\\tilde{v}}}
  &=i(Au+Bv|\tilde{v})-i(Cu-A^*v|\tilde{u})\\
  &=i(u|A^*\tilde{v}-C\tilde{u})-i(v|-B\tilde{v}-A\tilde{u})\\
  &=\Braket{\pmat{u\\v}}{\pmat{-A&-B\\-C&A^*}\pmat{\tilde{u}\\\tilde{v}}}
\end{align*}
shows that $T$ is \emph{$J_1$-skew-symmetric},  i.e.
\[\braket{Tx}{y}=-\braket{x}{Ty} \quad\text{for all}\quad
  x,y\in \mdef(T).\]
As a consequence, $T$ is always closable.
In the following, additional assumptions on $T$ such as in 
Theorem~\ref{theo:ham-invrbs} or the r0-diagonally dominance in
Section~\ref{sec:bndsol} will often imply that $T$ is already closed.
From
\begin{align*}
  \Real\Bigl[\pmat{A&B\\C&-A^*}\pmat{u\\v}\Big|\pmat{u\\v}\Bigr]
  &=\Real\bigl((Au+Bv|v)+(Cu-A^*v|u)\bigr)\\
  &=(Bv|v)+(Cu|u)
\end{align*}
we obtain that $T$ is nonnegative if and only if it is
\emph{$J_2$-accretive}, i.e.
$\Real[Tx|x]\geq 0$ for all $x\in\mdef(T)$.

Recall that we denote by $\sigma_p^i(T)$,
$\sigma_p^+(T)$ and $\sigma_p^-(T)$ the set of eigenvalues of $T$
on the imaginary axis and in the open right and left half-plane, respectively.
As a consequence of the $J_1$-skew-symmetry of $T$ we obtain:
\begin{prop}\label{prop:ham-specsym}
  Let $T$ be a Hamiltonian operator matrix.
  \begin{itemize}
  \item[(i)] If\, $\lambda,\mu\in\sigma_p(T)$ with 
    $\lambda\neq-\overline{\mu}$, then the root subspaces $\rsub(\lambda)$
    and $\rsub(\mu)$ are $J_1$-orthogonal. 
    In particular $\rsub(\lambda)$ is $J_1$-neutral for 
    $\lambda\not\in\sigma_p^i(T)$.
  \item[(ii)] If\, $T$ has a complete system of root subspaces,
    then $\sigma_p(T)$ is symmetric with respect to the imaginary axis,
    and $\rsub(\lambda)+\rsub(-\overline{\lambda})$ is $J_1$-non-degenerate
    with $\dim\rsub(\lambda)=\dim\rsub(-\overline{\lambda})$
    for every $\lambda\in\sigma_p(T)$.
  \item[(iii)] If there exists $z$ such that $z,-\bar{z}\in\varrho(T)$,
    then $T$ is $J_1$-skew-selfadjoint, i.e.\ $T=-T^\Jadj$, and $\sigma(T)$ is
    symmetric with respect to the imaginary axis.
  \end{itemize}
\end{prop}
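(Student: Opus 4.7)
All three assertions are driven by the $J_1$-skew-symmetry $\braket{Tx}{y}=-\braket{x}{Ty}$, which together with sesquilinearity (linear in the first slot, antilinear in the second) yields the basic identity
\[
\braket{(T-\lambda)x}{y}=-\braket{x}{(T+\bar\lambda)y}
\]
and, by a straightforward induction on $n$, the polynomial identity
\[
\braket{(T-\lambda)^n x}{y}=(-1)^n\braket{x}{(T+\bar\lambda)^n y}
\qquad \text{for all } x,y\in\mdef(T^n).
\]

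For part~(i), I would take $x\in\ker(T-\lambda)^n\subset\rsub(\lambda)$ and a target vector $y_0\in\ker(T-\mu)^m\subset\rsub(\mu)$. Writing $T+\bar\lambda=(\mu+\bar\lambda)I+(T-\mu)$ and using that $\lambda\neq-\bar\mu$ makes the scalar $\mu+\bar\lambda$ nonzero while $T-\mu$ is nilpotent on the finite-iterate kernel $\ker(T-\mu)^m$, so $(T+\bar\lambda)^n$ is a bijection of this kernel onto itself. Pick $y\in\ker(T-\mu)^m$ with $(T+\bar\lambda)^n y=y_0$ and insert into the identity to obtain $\braket{x}{y_0}=(-1)^n\braket{(T-\lambda)^n x}{y}=0$. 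The neutrality claim is then immediate because $\lambda\notin\sigma_p^i(T)$ forces $\lambda\neq-\bar\lambda$, so the argument applies with $\mu=\lambda$.

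For part~(ii), both the spectral symmetry and the non-degeneracy are density arguments. If $\lambda\in\sigma_p(T)$ but $-\bar\lambda\notin\sigma_p(T)$, then for every $\mu\in\sigma_p(T)$ we have $\mu\neq-\bar\lambda$, so (i) gives $\rsub(\lambda)\Jorth\rsub(\mu)$; then $\rsub(\lambda)$ is $J_1$-orthogonal to the dense subspace $\sum_{\mu\in\sigma_p(T)}\rsub(\mu)$, and invertibility of $J_1$ forces $\rsub(\lambda)=\{0\}$, contradicting $\lambda\in\sigma_p(T)$. The same trick handles non-degeneracy of $W=\rsub(\lambda)+\rsub(-\bar\lambda)$: any $x\in W\cap W^\Jorth$ is, by (i), also $J_1$-orthogonal to every $\rsub(\mu)$ with $\mu\notin\{\lambda,-\bar\lambda\}$, hence to the whole dense total root system, and thus $x=0$. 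Since each summand of $W$ is $J_1$-neutral by (i) (the only nontrivial case being $\lambda\notin\sigma_p^i(T)$), non-degeneracy of $W$ is equivalent to non-degeneracy of the sesquilinear pairing $\rsub(\lambda)\times\rsub(-\bar\lambda)\to\bbC$ induced by $\braket{\cdot}{\cdot}$, which yields injective antilinear maps in both directions and hence $\dim\rsub(\lambda)=\dim\rsub(-\bar\lambda)$.

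For part~(iii), I would pass to the Hilbert-space adjoint via $T^\Jadj=J_1 T^*J_1$ (using $J_1^*=J_1^{-1}=J_1$), which gives $\varrho(T^\Jadj)=\overline{\varrho(T)}$ and therefore $z\in\varrho(-T^\Jadj)$ if and only if $-\bar z\in\varrho(T)$. The hypothesis thus produces a common regular point $z\in\varrho(T)\cap\varrho(-T^\Jadj)$, and skew-symmetry gives the inclusion $T\subset-T^\Jadj$. Two operators with such an inclusion and a common regular point must agree: $T-z$ and $-T^\Jadj-z$ are both bijections of their respective domains onto $H\times H$, and since the first is a restriction of the second, their domains coincide; hence $T=-T^\Jadj$. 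Spectral symmetry then follows from $\overline{\sigma(T)}=\sigma(T^\Jadj)=\sigma(-T)=-\sigma(T)$.

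The only step I expect to require any care is the dimension equality in~(ii) when the root subspaces are allowed to be infinite-dimensional; once the polynomial identity and the resolvent computation are set up, the remaining work is routine manipulation of the sesquilinear form and a density argument against the completeness assumption.
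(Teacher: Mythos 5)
Your proof is correct and follows essentially the same route as the paper, which disposes of (i) by citing Bog\-n\'ar's Theorem~II.3.3 for $J_1$-symmetric operators (your polynomial identity argument is exactly the standard proof of that result), runs the identical orthogonality-against-a-dense-sum argument for (ii) with the dimension equality delegated to Bognár~\S I.10, and handles (iii) by the same "symmetric operator with $w,\bar w$ in the resolvent set is selfadjoint" criterion that you spell out. The only point deserving care is the one you already flagged: your injection-into-the-antidual argument for $\dim\rsub(\lambda)=\dim\rsub(-\overline{\lambda})$ is airtight only in finite dimensions, which is why the paper cites the Krein-space result for skewly linked neutral subspaces instead.
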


In particular, the point spectrum of a Hamiltonian with a finitely spectral
Riesz basis of subspaces is symmetric with respect to the imaginary axis.

\begin{proof}[Proof of the proposition]
  (i): Since $iT$ is $J_1$-symmetric, this is an immediate consequence
  of \cite[Theorem II.3.3]{bognar}.

  (ii): Let
  \[\sigma_0=\sigma_p^i(T)\cup\sigma_p^+(T)\cup
    \set{-\overline{\lambda}}{\lambda\in\sigma_p^-(T)}\]
  and define $U_\lambda=\rsub(\lambda)+\rsub(-\overline{\lambda})$ for
  $\lambda\in\sigma_0$. From (i) it follows that the $U_\lambda$ are
  pairwise $J_1$-orthogonal. For $x\in U_\lambda\cap U_\lambda^\Jorth$
  this implies that $\braket{x}{y}=0$ for all $y\in\sum_\mu U_\mu$.
  Since $\sum_\mu U_\mu\subset H\times H$
  is dense by assumption, we obtain $\braket{x}{y}=0$ for all $y\in H\times H$
  and thus $x=0$; $U_\lambda$ is $J_1$-non-degenerate.
  For $\lambda\in\sigma_0$ with $\Real\lambda>0$, the subspaces 
  $\rsub(\lambda)$ and $\rsub(-\overline{\lambda})$ are neutral and their
  sum is non-degenerate. 
  This implies that $\dim\rsub(\lambda)=\dim\rsub(-\overline{\lambda})$,
  see \cite[\S I.10]{bognar}.
  In particular $\lambda,-\overline{\lambda}\in\sigma_p(T)$
  and hence the symmetry of $\sigma_p(T)$.

  (iii): We have that $iT$ is $J_1$-symmetric and 
  $w,\overline{w}\in\varrho(iT)$
  where $w=iz$.   As in the Hilbert space situation this implies that
  $iT$ is $J_1$-selfadjoint. Consequently, $T$ is $J_1$-skew-selfadjoint.
\end{proof}

The $J_2$-accretivity of a nonnegative Hamiltonian leads to characterisations
of the spectrum at the imaginary axis:
\begin{prop}\label{prop:hamspecgap}
  Let $T$ be a nonnegative Hamiltonian operator matrix.
  \begin{itemize}
  \item[(i)] We have $\sigma^i_p(T)=\varnothing$ if and only if
    \begin{equation}\label{prop:hamspecgap-cond}
      \ker(A-it)\cap\ker C=\ker(A^*+it)\cap\ker B=\{0\}\quad\text{for all}
      \quad t\in\bbR.
    \end{equation}
  \item[(ii)] If\, $T$ is uniformly positive with
    $B,C\geq\gamma$, then
    \[\bigl\{z\in\bbC\,\big|\,|\Real z|<\gamma\bigr\}\subset r(T).\]
  \end{itemize}
\end{prop}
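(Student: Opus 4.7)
The plan is to exploit the $J_2$-accretivity identity derived just before the statement,
\[
\Real[Tx|x]=(Bv|v)+(Cu|u)\qquad\text{for }x=\pmat{u\\v}\in\mdef(T),
\]
together with the observation that $[x|x]=2\Real(u|v)$ is real and bounded in modulus by $\|x\|^2$. Both parts then follow by combining the nonnegativity (resp.\ uniform positivity) of $B,C$ with a Cauchy--Schwarz estimate.

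For (i), suppose $x=(u,v)\in\ker(T-it)\setminus\{0\}$ with $t\in\bbR$. Then $[Tx|x]=it[x|x]$ is purely imaginary, so $(Bv|v)+(Cu|u)=\Real[Tx|x]=0$. Since $B,C\geq 0$, each summand vanishes, and the Cauchy--Schwarz inequality for the nonnegative sesquilinear form $(B\cdot|\cdot)$ on $\mdef(B)$, together with density of $\mdef(B)$ in $H$, forces $Bv=0$; analogously $Cu=0$. The eigenvalue equation then reduces to $Au=itu$ and $A^*v=-itv$, so $u\in\ker(A-it)\cap\ker C$ and $v\in\ker(A^*+it)\cap\ker B$. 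Under the hypothesis \eqref{prop:hamspecgap-cond} this yields $x=0$, a contradiction, so $\sigma_p^i(T)=\varnothing$. Conversely, if $u\neq 0$ lies in $\ker(A-it)\cap\ker C$, the vector $(u,0)$ lies in $\mdef(T)$ and satisfies $T(u,0)=it(u,0)$; analogously $(0,v)$ is an eigenvector of $T$ for $it$ when $v\in\ker(A^*+it)\cap\ker B$ is nonzero.

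For (ii), fix $z\in\bbC$ with $|\Real z|<\gamma$ and an arbitrary $x=(u,v)\in\mdef(T)$. Using $B,C\geq\gamma$ and $2|\Real(u|v)|\leq\|u\|^2+\|v\|^2=\|x\|^2$, I get
\[
\Real[(T-z)x|x]=(Bv|v)+(Cu|u)-2(\Real z)\Real(u|v)\geq(\gamma-|\Real z|)\|x\|^2.
\]
Combined with $|[(T-z)x|x]|\leq\|(T-z)x\|\,\|x\|$, this produces the lower bound
\[
\|(T-z)x\|\geq(\gamma-|\Real z|)\|x\|,
\]
showing that $T-z$ is injective and has a bounded inverse on its range, i.e.\ $z\in r(T)$.

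Neither part presents a serious obstacle; both are direct consequences of the $J_2$-accretivity identity. The one mildly delicate point is the step $(Bv|v)=0\Rightarrow Bv=0$ for the merely symmetric nonnegative (possibly unbounded) operator $B$, which is handled by the Cauchy--Schwarz inequality $|(Bv|w)|^2\leq(Bv|v)(Bw|w)$ for the nonnegative form $(B\cdot|\cdot)$ on $\mdef(B)$ and density of $\mdef(B)$ in $H$.
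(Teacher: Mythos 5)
Your proof is correct and follows essentially the same route as the paper: both parts rest on the accretivity identity $\Real[Tx|x]=(Bv|v)+(Cu|u)$. The only differences are technical — for the step $(Bv|v)=0\Rightarrow Bv=0$ the paper takes a nonnegative selfadjoint extension $\widetilde{B}$ and writes $Bv=(\widetilde{B}^{1/2})^2v$, whereas you use Cauchy--Schwarz for the nonnegative form $(B\cdot|\cdot)$ together with density of $\mdef(B)$ (equally valid); and in (ii) you establish the lower bound $\|(T-z)x\|\geq(\gamma-|\Real z|)\|x\|$ directly, while the paper derives the same estimate in contrapositive form via an approximate eigensequence.
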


\begin{proof}
  (i): We show that $(T-it)x=0$ for $x=(u,v)\in\mdef(T)$
  if and only if
  \[u\in\ker(A-it)\cap\ker C\quad\text{and}\quad
    v\in\ker(A^*+it)\cap\ker B.\]
  Indeed if $(T-it)x=0$, then
  \begin{gather*}
    (A-it)u+Bv=0,\quad Cu-(A^*+it)v=0 \quad\text{and}\quad\\
     0=\Real(it[x|x])=\Real[Tx|x]=(Bv|v)+(Cu|u).
  \end{gather*}
  Since $B,C$ are nonnegative, this yields $(Bv|v)=(Cu|u)=0$.
  Now $B$ admits a non\-negative selfadjoint extension $\widetilde{B}$.
  We obtain $\|\widetilde{B}^{1/2}v\|^2=(\widetilde{B}v|v)=(Bv|v)=0$ 
  and hence
  $Bv=(\widetilde{B}^{1/2})^2v=0$.
  Similarly $Cu=0$ and thus also $(A-it)u=(A^*+it)v=0$.
  The other implication is immediate.

  (ii): For $x=(u,v)\in\mdef(T)$ we have
  $\Real[Tx|x]=(Bv|v)+(Cu|u)\geq\gamma\|x\|^2$.
  Let $z\in\bbC\setminus r(T)$. Then there exists a sequence
  $x_n\in\mdef(T)$ with $\|x_n\|=1$ and $(T-z)x_n\to 0$ as $n\to\infty$.
  For $\alpha_n=\Real[(T-z)x_n|x_n]$ this implies 
  $\alpha_n\to 0$.
  We obtain 
  \begin{align*}
    \gamma&=\gamma\|x_n\|^2\leq\Real[Tx_n|x_n]
    =\alpha_n+\Real z\cdot[x_n|x_n]\\
    &\leq |\alpha_n|+|\Real z|\,|(J_2x_n|x_n)|\leq
    |\alpha_n|+|\Real z|\|x_n\|^2\to|\Real z|
  \end{align*}
  as $n\to\infty$, i.e.\ $\gamma\leq|\Real z|$.
\end{proof}

We end this section with two perturbation theorems which ensure the
existence of finitely spectral Riesz bases of subspaces for $T$.
\begin{theo}\label{theo:ham-invrbs}
  Let $T$ be a Hamiltonian operator matrix where $A$ is normal with
  compact resolvent and $B$, $C$ are \emph{$p$-subordinate} to
  $A$ with $0\leq p<1$.
  If $\sigma(A)$ lies on finitely many rays
  from the origin and
  \begin{equation}\label{theo:ham-invrbs-ac}
    \liminf_{r\to\infty}\frac{N(r,A)}{r^{1-p}}<\infty,
  \end{equation}
  then $T$ has a compact resolvent, is $J_1$-skew-selfadjoint, and there
  exists a finitely spectral Riesz basis of subspaces $(V_k)_{k\in\bbN}$
  for $T$.
\end{theo}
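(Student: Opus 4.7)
The plan is to apply Theorem~\ref{theo:psubpert-invrbs} to the decomposition $T=G+S$ where
\[
G=\pmat{A & 0 \\ 0 & -A^*},\qquad S=\pmat{0 & B \\ C & 0}.
\]
Since $A$ is normal, so is $-A^*$, hence $G$ is normal as a diagonal sum of normal operators, and it inherits a compact resolvent from $A$. The spectrum $\sigma(G)=\sigma(A)\cup\bigl(-\overline{\sigma(A)}\bigr)$ still lies on finitely many rays from the origin (reflecting the rays $e^{i\theta_j}\bbR_{\geq0}$ in the imaginary axis), and $N(r,G)=2N(r,A)$, so the growth hypothesis \eqref{theo:ham-invrbs-ac} carries over to $G$.

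The main technical point is the $p$-subordination of $S$ to $G$. Because $A$ is normal, $\mdef(A)=\mdef(A^*)$ and $\|Au\|=\|A^*u\|$ for $u\in\mdef(A)$, so the $p$-subordination of $B$ and $C$ to $A$ gives the same estimate with $A^*$ in place of $A$. Thus $\mdef(G)=\mdef(A)\times\mdef(A^*)\subset\mdef(S)$, and this also shows $\mdef(T)=\mdef(G)$ because $B,C$ are relatively bounded by $A$ (so the intersections in the definition of $\mdef(T)$ reduce to $\mdef(A)\times\mdef(A^*)$). For $x=(u,v)\in\mdef(G)$, applying the defining inequality of $p$-subordination componentwise and then H\"older's inequality with exponents $1/(1-p)$ and $1/p$ yields
\[
\|Sx\|^2=\|Bv\|^2+\|Cu\|^2\leq b^2\bigl(\|v\|^{2(1-p)}\|A^*v\|^{2p}+\|u\|^{2(1-p)}\|Au\|^{2p}\bigr)\leq b^2\|x\|^{2(1-p)}\|Gx\|^{2p},
\]
so $S$ is $p$-subordinate to $G$ with the same exponent.

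With these verifications, Theorem~\ref{theo:psubpert-invrbs} applies and produces a compact resolvent and a finitely spectral Riesz basis of subspaces $(V_k)_{k\in\bbN}$ for $T=G+S$.

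It remains to establish $J_1$-skew-selfadjointness via Proposition~\ref{prop:ham-specsym}(iii). Since $T$ has compact resolvent, $\sigma(T)$ consists of isolated eigenvalues of finite algebraic multiplicity, so both $\sigma(T)$ and $-\overline{\sigma(T)}$ are discrete in $\bbC$; their union therefore has empty interior and we may pick $z\in\bbC$ with both $z$ and $-\bar{z}$ in $\varrho(T)$. Proposition~\ref{prop:ham-specsym}(iii) then gives $T=-T^\Jadj$, completing the proof. The main obstacle is the H\"older step that combines the individual $p$-subordination bounds for $B$ and $C$ into a single $p$-subordination bound for $S$ relative to $G$; everything else is a routine transfer of hypotheses from $A$ to $G$.
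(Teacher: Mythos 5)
Your proposal is correct and follows essentially the same route as the paper: the paper's proof applies Theorem~\ref{theo:psubpert-invrbs} to exactly the decomposition $T=G+S$ with $G=\left(\begin{smallmatrix}A&0\\0&-A^*\end{smallmatrix}\right)$, $S=\left(\begin{smallmatrix}0&B\\C&0\end{smallmatrix}\right)$ and then invokes Proposition~\ref{prop:ham-specsym}, deferring the verification of the hypotheses to an external reference. Your H\"older argument for the $p$-subordination of $S$ to $G$ and the transfer of the spectral hypotheses from $A$ to $G$ correctly supply those deferred details.
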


\begin{proof}
  This is an application of Theorem~\ref{theo:psubpert-invrbs}
  to the decomposition
  \[T=G+S\quad\text{with}\quad G=\pmat{A&0\\0&-A^*},\quad 
    S=\pmat{0&B\\C&0},\]
  see \cite[Theorem~7.2]{wyss-psubpert} for details. The skew-selfadjointness
  then follows by Proposition~\ref{prop:ham-specsym}.
\end{proof}

\begin{theo}\label{theo:ham-rbeigvec}
  Let $T$ be a uniformly positive Hamiltonian such that $A$ is 
  skew-selfadjoint with compact resolvent,  $B,C$ are bounded
  and satisfy $B,C\geq\gamma$.
  Let $ir_k$ be the eigenvalues of $A$
  where $(r_k)_{k\in\Lambda}$ is increasing and
  $\Lambda\in\{\bbZ_+,\bbZ_-,\bbZ\}$.
  Suppose that almost all eigenvalues $ir_k$ are simple  and that for some
  $l>b=\max\{\|B\|,\|C\|\}$ we have
  \[r_{k+1}-r_k\geq 2l \quad\text{for almost all } k\in\Lambda.\]
  Then $T$ has a compact resolvent, almost all of its eigenvalues are simple,
  \[\sigma(T)\subset\{z\in\bbC\,|\,\gamma\leq|\Real z|\leq b\},\]
  and $T$ admits a Riesz basis of eigenvectors and finitely many Jordan 
  chains.
\end{theo}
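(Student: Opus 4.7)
The plan is to realise $T$ as a bounded perturbation $T=G+S$ of the skew-selfadjoint operator $G=\pmat{A&0\\0&A}$ (using $-A^*=A$), which has compact resolvent and eigenvalues $ir_k$ with multiplicity twice that of $A$, and $S=\pmat{0&B\\C&0}$ with $\|S\|\leq b$. Since $B,C$ are bounded they are $0$-subordinate to $A$, and the gap hypothesis forces $N(r,A)=O(r)$; thus the hypotheses of Theorem~\ref{theo:ham-invrbs} are satisfied with $p=0$, yielding that $T$ has compact resolvent, is $J_1$-skew-selfadjoint, and admits a finitely spectral Riesz basis of subspaces $(V_k)_{k\in\bbN}$.

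The inclusion $\sigma(T)\subset\set{z\in\bbC}{\gamma\leq|\Real z|\leq b}$ is immediate: the lower bound is Proposition~\ref{prop:hamspecgap}(ii), and for $|\Real z|>b$ a Neumann series argument using $\|(G-z)^{-1}\|=1/\dist(z,i\bbR)<1/b$ gives the upper bound. For all but finitely many indices, collected into a cofinite set $\Lambda_1\subset\Lambda$, the eigenvalue $ir_k$ of $A$ is simple and the circle $\Gamma_k=\partial B(ir_k,l)$ satisfies $\dist(z,\sigma(G))=l$ on $\Gamma_k$, so the same Neumann argument shows $\Gamma_k\subset\varrho(T)$ and $\|(T-z)^{-1}\|\leq 1/(l-b)$ on $\Gamma_k$. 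Stability of the total algebraic multiplicity under the bounded perturbation forces $\dim V_k=2$, and spectral symmetry about $i\bbR$ (Proposition~\ref{prop:ham-specsym}) together with the absence of purely imaginary eigenvalues (Proposition~\ref{prop:hamspecgap}(i), vacuous since $\ker B=\ker C=\{0\}$) shows that the two eigenvalues of $T$ in $B(ir_k,l)$ are simple and of the form $\lambda_k^+$, $\lambda_k^-=-\overline{\lambda_k^+}$; thus $V_k=V_k^+\oplus V_k^-$ splits into two one-dimensional eigenspaces.

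The main obstacle is upgrading the Riesz basis with parentheses obtained by choosing a basis in each $V_k$ to a genuine Riesz basis of eigenvectors, which requires a uniform lower bound on the angle between $V_k^+$ and $V_k^-$ inside $V_k$. The plan is to use the Riesz projection representation of $(T-ir_k)|_{V_k}$ as a contour integral of $(z-ir_k)(T-z)^{-1}$ over $\Gamma_k$, together with the uniform resolvent bound $\|(T-z)^{-1}\|\leq 1/(l-b)$ on $\Gamma_k$, to obtain $\|(T-ir_k)|_{V_k}\|\leq l^2/(l-b)$ uniformly on $\Lambda_1$. Writing the eigenprojection of $T|_{V_k}$ onto $V_k^+$ as the rational expression $\bigl((T-ir_k)|_{V_k}-(\lambda_k^--ir_k)I\bigr)/(\lambda_k^+-\lambda_k^-)$ and using $|\lambda_k^+-\lambda_k^-|\geq 2\gamma$ gives a uniform bound on this projection, hence the required angle bound. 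This promotes the Riesz basis of subspaces $(V_k)$ to a Riesz basis of subspaces for the refined family $(V_k^\pm)_{k\in\Lambda_1}\cup(V_k)_{k\in\Lambda\setminus\Lambda_1}$; unit eigenvectors in each one-dimensional $V_k^\pm$ together with Jordan chain bases in the finitely many exceptional $V_k$ then form the claimed Riesz basis of eigenvectors and finitely many Jordan chains.
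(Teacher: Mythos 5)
Your reduction $T=G+S$ with $G=\pmat{A&0\\0&A}$, the localisation $\sigma(T)\subset\set{z\in\bbC}{\gamma\leq|\Real z|\leq b}$, the multiplicity count $\dim W_k=2$ for the ranges $W_k$ of the Riesz projections over the circles $\Gamma_k$, and the uniform bound on the eigenprojections inside each $W_k$ (via $\|(T-z)^{-1}\|\leq(l-b)^{-1}$ on $\Gamma_k$ and $|\lambda_k^+-\lambda_k^-|=2|\Real\lambda_k^+|\geq2\gamma$) are all correct; the last of these does legitimately split each two-dimensional $W_k$ into two eigenspaces with uniformly bounded angle. The gap sits one step earlier, where you identify the subspaces $V_k$ produced by Theorem~\ref{theo:ham-invrbs} with the cluster spaces $W_k$ (``stability of the total algebraic multiplicity \dots forces $\dim V_k=2$''). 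Theorem~\ref{theo:ham-invrbs} only asserts the existence of \emph{some} finitely spectral Riesz basis of subspaces; its members are ranges of Riesz projections over disjoint parts of $\sigma(T)$ chosen by the construction in \cite{wyss-psubpert}, and nothing in the statement guarantees that these parts are the individual clusters $\sigma(T)\cap B(ir_k,l)$ rather than unions of unboundedly many of them, or that a single cluster is not split between two different $V_j$. What you actually need, and what your argument never establishes, is that the family $(W_k)$ itself satisfies the two-sided estimate \eqref{rbs-ineq-fin}, i.e.\ is a Riesz basis of subspaces.

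This cannot be recovered from the uniform bound $\|Q_k\|\leq l/(l-b)$ on the individual cluster projections: refining a Riesz basis of subspaces into smaller invariant pieces preserves the Riesz property only when the number of pieces per member is uniformly bounded (as in your final two-fold splitting of each $W_k$), and here a single $V_j$ could meet unboundedly many clusters. The missing ingredient is a quadratic, Bari--Markus type estimate such as $\sum_k\|(Q_k-P_k^0)x\|^2\leq q^2\|x\|^2$ with $q<1$, where $P_k^0$ are the orthogonal spectral projections of $G$; the naive contour estimate for $Q_k-P_k^0=\frac{1}{2\pi i}\int_{\Gamma_k}(z-T)^{-1}S(z-G)^{-1}\,dz$ yields such a $q<1$ only when $l$ exceeds a fixed multiple of $b$, not under the stated hypothesis $l>b$. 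Note that the paper's entire proof of this theorem is the citation of Theorem~7.3 in \cite{wyss-psubpert}; the content of that citation is precisely the step your argument takes for granted.
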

\begin{proof}
  See \cite[Theorem~7.3]{wyss-psubpert}.
\end{proof}

\begin{remark}
  Due to \cite[Remark~6.7]{wyss-psubpert}, Theorem~\ref{theo:ham-invrbs}
  continues
  to hold if $A$ is an operator with compact resolvent and a Riesz basis
  of Jordan chains,
  $B$ is $p$-subordinate to $A^*$, $C$ is $p$-subordinate to $A$, 
  $0\leq p<1$, almost all eigenvalues of $A$ lie inside sets
  \(\{e^{i\theta_j}(x+iy)\,|\,x>0,\,|y|\leq\alpha x^p\}\)
  with $\alpha\geq0$, $-\pi\leq\theta_j<\pi$, $j=1,\dots,n$,
  and \eqref{theo:ham-invrbs-ac} is satisfied.
  Theorem~\ref{theo:ham-rbeigvec} also holds if $A$ has a compact resolvent, 
  a Riesz 
  basis of eigenvectors and finitely many Jordan chains, and almost all
  eigenvalues of $A$ are simple and contained in a strip around the imaginary
  axis; the constant $b$ has to be adjusted then.
\end{remark}

\section{Invariant subspaces of Hamiltonians}
\label{sec:invsubham}

Now we investigate properties of certain invariant subspaces of the 
Hamiltonian with respect to the two indefinite inner products 
defined in the previous section.

Let $V$ be a Krein space.
Recall that a subspace $U\subset V$ 
is neutral if and only if $U\subset U^\Jorth$. It is called 
\emph{hypermaximal neutral} if $U=U^\Jorth$, see
\cite{azizov-iokhvidov,bognar}.
It is not hard to see that if $U,W$ are neutral subspaces with $V=U\oplus W$,
then $U$ and $W$ are hypermaximal neutral.
For $\dim V<\infty$, this is even an equivalence:

\begin{lemma}
  Let $V$ be a finite-dimensional Krein space.
  If\, $U\subset V$ is hypermaximal
    neutral, then there exists a neutral subspace $W$ such that
    $V=U\oplus W$.
\end{lemma}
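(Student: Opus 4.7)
The natural candidate for the complementary neutral subspace is $W := JU$, where $J$ denotes the fundamental symmetry of $V$. I would organise the proof in three short steps, none of which I expect to be hard.

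First I would show that $J$ itself is an isometry of the Krein-space inner product: since $J^* = J$ and $J^2 = \id$, a quick calculation gives
\[\braket{Jx}{Jy} = (J^2 x|Jy) = (x|Jy) = (Jx|y) = \braket{x}{y}.\]
Hence $W = JU$ inherits neutrality from $U$.

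Next I would verify that $U \cap W = \{0\}$: for $x \in U \cap JU$ write $x = Jy$ with $y \in U$; then $J^2 = \id$ gives $Jx = y$, and the $U$-neutrality yields $0 = \braket{x}{y} = (Jx|y) = (y|y)$, forcing $y = 0$ and hence $x = 0$.

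Finally, for the dimension count: since a finite-dimensional Krein space is non-degenerate, $\dim U + \dim U^\Jorth = \dim V$. Hypermaximal neutrality $U = U^\Jorth$ then forces $\dim U = \tfrac{1}{2}\dim V$, and $\dim JU = \dim U$ because $J$ is a Hilbert-space isomorphism. Combined with the trivial intersection from the previous step, this yields $V = U \oplus JU$, where the direct sum is automatically topological in finite dimensions. I do not foresee a genuine obstacle; the only point that deserves attention is the isometry property of $J$, which is what makes the construction work.
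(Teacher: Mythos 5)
Your proof is correct, and it takes a genuinely different route from the paper. The paper proceeds by induction on $\dim U$, building a dual pair $(e_1,\dots,e_n)$ in $U$ and $(f_1,\dots,f_n)$ in $V$ with $\braket{e_j}{f_l}=\delta_{jl}$ via a Gram--Schmidt-type correction in the indefinite inner product, and takes $W=\mspan\{f_1,\dots,f_n\}$. You instead take $W=JU$ outright: since $J=J^*$ and $J^2=\id$, the symmetry $J$ is a Krein-space isometry, so $JU$ is neutral; the computation $0=\braket{Jy}{y}=(y|y)$ kills $U\cap JU$; and the identity $U^\Jorth=(JU)^\perp$ (which is immediate from $\braket{x}{y}=(Jx|y)$) gives $\dim U^\Jorth=\dim V-\dim U$, so hypermaximality forces $\dim U=\tfrac12\dim V$ and the dimension count closes the argument. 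In fact your identity shows more: $U=U^\Jorth$ gives $JU=(U^\Jorth)^\perp=U^\perp$, so your complement is just the Hilbert-space orthogonal complement of $U$, and the argument would survive in infinite dimensions for closed hypermaximal neutral subspaces, whereas the paper's induction is intrinsically finite-dimensional. What the paper's construction buys in exchange is the explicit dual basis $\braket{e_j}{f_l}=\delta_{jl}$, which is extra structure not actually needed where the lemma is invoked (Theorem~\ref{theo:hypmaxinv} only uses the existence of a neutral complement). Your version is shorter and fully adequate for that purpose.
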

\begin{proof}
  By induction on $n=\dim U$ we show that  there exist systems
  $(e_1,\dots,e_n)$ in $U$ and $(f_1,\dots,f_n)$ in $V$ which
  form a \emph{dual pair},
  i.e.\ $\braket{e_j}{f_l}=\delta_{jl}$, and are such that
  $W=\mspan\{f_1,\dots,f_n\}$ is neutral.
  Indeed, if $\dim U=n+1$ and $e\in U\setminus\mspan\{e_1,\dots,e_n\}$,
  we can set
  \[e_{n+1}=e-\sum_{j=1}^n\braket{e}{f_j}e_j.\]
  Since $V$ is non-degenerate, there exists $f\in V$ with 
  $\braket{e_{n+1}}{f}=1$. Then
  \[\widetilde{f}=f-\sum_{j=1}^n\braket{f}{e_j}f_j
    -\sum_{j=1}^n\braket{f}{f_j}e_j
    \quad\text{and}\quad
    f_{n+1}=\widetilde{f}-\frac{\braket{\widetilde{f}}{\widetilde{f}}}{2}
    e_{n+1}\]
  yields the desired properties.

  If  $\sum_{j=1}^n\alpha_je_j+\beta_jf_j=0$, then
  we can take the inner product of this equation with the elements $e_j,f_j$
  and find $\alpha_j=\beta_j=0$ for all $j$;
  $(e_1,\dots,e_n,f_1,\dots,f_n)$ is linearly independent.
  In particular $(e_1,\dots,e_n)$ is a basis of $U$ and  $U\cap W=\{0\}$.
  To show  $V=U\oplus W$, let $x\in V$ and set $u=x-w$ where
  $w=\sum_{j=1}^n\braket{x}{e_j}f_j\in W$. 
  Then $\braket{u}{e_j}=0$ for all $j$, i.e.\ $u\in U^\Jorth=U$.
\end{proof}

For an operator whose point spectrum $\sigma_p(T)$ is symmetric with
respect to the imaginary axis, we say that
a subset $\sigma\subset\sigma_p(T)\setminus i\bbR$ is an 
\emph{sc-set} (\emph{sc} for skew-conjugate) if
\begin{itemize}
\item[(i)]
  \(\lambda\in\sigma\,\Rightarrow\,-\overline{\lambda}\not\in\sigma\)
  and
\item[(ii)]
  $\lambda\in\sigma_p(T)\setminus i\bbR\,\Rightarrow\,\lambda\in\sigma$
  or $-\overline{\lambda}\in\sigma$.
\end{itemize}
In other words, $\sigma$ contains one eigenvalue from each
skew-conjugate pair $(\lambda,-\overline{\lambda})$ in
$\sigma_p(T)\setminus i\bbR$.

\begin{theo}\label{theo:hypmaxinv}
  Let $T$ be a closed Hamiltonian operator matrix with a finitely spectral
  Riesz basis of subspaces.
  Then $T$ admits a hypermaximal $J_1$-neutral, $T$-invari\-ant, compatible
  subspace if and only if   for all $it\in\sigma_p^i(T)$
  we have
  \begin{equation}\label{eq:hypmaxcond}
    \rsub(it)=M_{it}\oplus N_{it}
    \quad\text{with}\quad 
    M_{it},N_{it} \text{ $J_1$-neutral and } M_{it}
    \text{ $T$-invariant}.
  \end{equation}
  In this case, for every sc-set $\sigma\subset\sigma_p(T)\setminus i\bbR$
  the  $T$-invariant compatible subspace
  \begin{equation}\label{eq:hypmaxinv}
    U=\overline{\sum_{\lambda\in\sigma}\rsub(\lambda)
      +\sum_{it\in\sigma_p^i(T)}M_{it}}
  \end{equation}
  is hypermaximal $J_1$-neutral.
\end{theo}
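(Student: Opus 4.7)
The plan is to combine the $J_1$-orthogonality of root subspaces from Proposition~\ref{prop:ham-specsym} with the compatible-subspace description of Corollary~\ref{coroll:invcomp}, decomposing every vector along the spectral refinement \eqref{invrbs-root} and pairing eigenvalues with their skew-conjugate partners.

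For the if-direction, the candidate $U$ in \eqref{eq:hypmaxinv} is immediately $T$-invariant and compatible by Corollary~\ref{coroll:invcomp}, since each summand is $T$-invariant inside some $\rsub(\lambda)$. $J_1$-neutrality reduces to three facts: $\rsub(\lambda)$ is neutral for $\lambda\in\sigma\subset\sigma_p(T)\setminus i\bbR$ by Proposition~\ref{prop:ham-specsym}(i); $M_{it}$ is neutral by hypothesis; and any two distinct summands are $J_1$-orthogonal because no two distinct indices in $\sigma\cup\sigma_p^i(T)$ form a skew-conjugate pair (the sc-set property rules this out inside $\sigma$, and for $t\neq s$ one has $-\overline{is}=is\neq it$). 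Continuity of $\braket{\cdot}{\cdot}$ extends neutrality to the closure.

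The main step is $U^\Jorth\subset U$. Given $x\in U^\Jorth$, I would expand $x=\sum_k x_k$ in the Riesz basis and further refine $x_k=\sum_\mu x_\mu$ using \eqref{invrbs-root}. For any $\mu$ with $-\overline{\mu}\in\sigma$, testing against an arbitrary $u\in\rsub(-\overline{\mu})\subset U$ and invoking continuity of the Krein form together with the $J_1$-orthogonality of non-paired root subspaces collapses the double series $\braket{x}{u}=0$ to the single term $\braket{x_\mu}{u}=0$; the non-degeneracy of $\rsub(\mu)+\rsub(-\overline{\mu})$ from Proposition~\ref{prop:ham-specsym}(ii), combined with neutrality of both summands, then forces $x_\mu=0$. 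For $it\in\sigma_p^i(T)$, testing against $u\in M_{it}\subset U$ produces $\braket{x_{it}}{u}=0$ for all $u\in M_{it}$; since $\rsub(it)=\rsub(it)+\rsub(-\overline{it})$ is non-degenerate and splits as $M_{it}\oplus N_{it}$ with both summands neutral, the observation just before the finite-dimensional lemma above the theorem makes $M_{it}$ hypermaximal neutral inside $\rsub(it)$, so $x_{it}\in M_{it}$. Assembling the components places $x$ in $U$.

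For the only-if direction, given any hypermaximal $J_1$-neutral, $T$-invariant, compatible $U$, Corollary~\ref{coroll:invcomp} writes $U=\overline{\sum_\lambda W_\lambda}$ with $T$-invariant $W_\lambda\subset\rsub(\lambda)$; I would set $M_{it}=W_{it}$, which is $T$-invariant by construction and $J_1$-neutral because $U\subset U^\Jorth$. To produce $N_{it}$ it suffices to show $M_{it}$ is hypermaximal neutral in $\rsub(it)$: for $y\in\rsub(it)$ with $\braket{y}{m}=0$ for every $m\in M_{it}$, the same Riesz-basis collapse against an arbitrary $u\in U$ shows that the only Riesz component of $u$ pairing nontrivially with $y$ is the one containing $\rsub(it)$, and its $\rsub(it)$-part lies in $W_{it}=M_{it}$ by compatibility; hence $\braket{y}{u}=0$, giving $y\in U^\Jorth=U$, and then compatibility identifies $y\in W_{it}=M_{it}$. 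The finite-dimensional lemma preceding the theorem then furnishes the required neutral complement $N_{it}$. The main obstacle throughout is the careful handling of the infinite double sums under the indefinite form; once the series-collapse is established, the rest is the finite-dimensional hypermaximal-neutral lemma applied at each purely imaginary eigenvalue.
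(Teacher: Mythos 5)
Your proposal is correct and follows essentially the same route as the paper: both directions rest on the $J_1$-orthogonality and non-degeneracy statements of Proposition~\ref{prop:ham-specsym}, the compatibility structure of Corollary~\ref{coroll:invcomp}, the finite-dimensional hypermaximal-neutral lemma, and the series-collapse of $\braket{x}{u}$ via continuity of the Krein form. The only (cosmetic) difference is that in the ``if'' direction you kill the unwanted components eigenvalue-pair by eigenvalue-pair, whereas the paper groups them by Riesz-basis block $V_k$ and invokes non-degeneracy of $\widetilde{U}_k+W_k$ all at once.
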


\begin{proof}
  Let $(V_k)_{k\in\bbN}$ be a finitely spectral Riesz basis of subspaces
  for $T$ and write $\sigma_k=\sigma(T|_{V_k})$.
  Suppose first that $U$ is hypermaximal $J_1$-neutral, $T$-invariant,
  and compatible with $(V_k)$. So $U$ is of the form
  \[U=\rbsdec_{k\in\bbN} U_k
    =\overline{\sum_{\lambda\in\sigma_p(T)}M_\lambda}\]
  where the subspaces $U_k\subset V_k$ and $M_\lambda\subset\rsub(\lambda)$
  are all $T$-invariant, compare Corollary~\ref{coroll:invcomp}.
  By Proposition~\ref{prop:ham-specsym}, each $\rsub(it)$, 
  $it\in\sigma_p^i(T)$, is 
  $J_1$-non-degenerate and thus itself a Krein space.
  In view of the previous lemma it suffices to show that
  $M_{it}$ is hypermaximal neutral with respect to $\rsub(it)$, i.e., 
  $M_{it}^\Jorth\cap\rsub(it)=M_{it}$.

  Since $M_{it}\subset U$ we have that $M_{it}$ is neutral and hence
  $M_{it}\subset M_{it}^\Jorth\cap\rsub(it)$.
  Let $x\in M_{it}^\Jorth\cap\rsub(it)$.
  Since $\rsub(it)$ is $J_1$-orthogonal to $\rsub(\lambda)$ for every
  $\lambda\neq it$, we see that $x\Jorth M_\lambda$ for all $\lambda$
  and hence $x\in U^\Jorth=U$.
  On the other hand $x\in\rsub(it)\subset V_{k_0}$ with $k_0$ such that
  $it\in\sigma_{k_0}$. Consequently $x\in U\cap V_{k_0}=U_{k_0}$.
  Now the decomposition
  \[U_{k_0}=\bigoplus_{\lambda\in\sigma_{k_0}}M_\lambda\]
  implies that $x\in U_{k_0}\cap\rsub(it)=M_{it}$.

  For the other implication, suppose now that
  for every $it\in\sigma_p^i(T)$ there is a decomposition
  $\rsub(it)=M_{it}\oplus N_{it}$ into neutral subspaces
  where $M_{it}$ is $T$-invariant, let
  $\sigma\subset\sigma_p(T)\setminus i\bbR$ be an sc-set, and let $U$ be
  given by \eqref{eq:hypmaxinv}.
  Since $U$ is the closure of the sum of neutral, pairwise orthogonal
  subspaces, $U$ is neutral. Moreover, $U$ is $T$-invariant and
  compatible with $(V_k)$ with decomposition
  \[U=\rbsdec_{k\in\bbN}U_k,\qquad
    U_k=\sum_{\lambda\in\sigma_k\cap\sigma}\rsub(\lambda)
    +\sum_{it\in\sigma_k^i}M_{it},\]
  where $\sigma_k^i=\sigma_p^i(T|_{V_k})$.
  It remains to show that $U^\Jorth\subset U$.
  We have $V_k=U_k\oplus W_k$ with
  \[W_k=\sum_{\lambda\in\tau_k}\rsub(\lambda)+\sum_{it\in\sigma_k^i}N_{it},
    \qquad \tau_k=\sigma_k\setminus(\sigma\cup\sigma_k^i).\]
  Let $x\in U^\Jorth$. We expand $x$ in the Riesz basis $(V_k)$ as
  $x=\sum_k(u_k+w_k)$ with $u_k\in U_k$, $w_k\in W_k$.
  To show that all $w_k$ are zero, we consider now the subspaces
  \[\widetilde{U}_k=\sum_{\lambda\in\tau_k}\rsub(-\overline{\lambda})
    +\sum_{it\in\sigma_k^i}M_{it}.\]
  The fact that $\sigma$ is an sc-set yields 
  $\lambda\in\tau_k\Rightarrow -\overline{\lambda}\in\sigma$,
  and therefore  $\widetilde{U}_k\subset U$.
  Moreover $\widetilde{U}_k$ is $J_1$-orthogonal to $W_j$ for $j\neq k$,
  and $W_k$ is neutral.
  For $\tilde u\in \widetilde{U}_k$, $\tilde w\in W_k$
  we thus compute
  \[0=\braket{x}{\tilde u}=\sum_{j\in\bbN}\braket{u_j+w_j}{\tilde u}
    =\braket{w_k}{\tilde u}=\braket{w_k}{\tilde u+\tilde w}.\]
  In view of Proposition~\ref{prop:ham-specsym}, $\widetilde{U}_k+W_k$
  is non-degenerate since it is the orthogonal sum of subspaces 
  $\rsub(\lambda)+\rsub(-\overline{\lambda})$, 
  $\lambda\in\tau_k\cup\sigma_k^i$.
  Consequently $w_k=0$ for all $k$ and hence $x=\sum_ku_k\in U$.
\end{proof}

\begin{remark}
  Since all root subspaces of $T$ are finite-dimensional,
  results  about the Jordan structure of $J$-symmetric matrices
  (e.g.\ \cite[Theorem~2.3.2]{lancaster-rodman})
  may be used to reformulate condition \eqref{eq:hypmaxcond}:
  It turns out that
  \eqref{eq:hypmaxcond} holds if and only if
  $\rsub(it)=M_{it}'\oplus N_{it}'$ with neutral subspaces $M_{it}',N_{it}'$.
\end{remark}

Now we consider the subspaces associated with $\sigma_p^\pm(T)$, the
point spectrum of $T$ in the right and left half-plane, respectively.  
\begin{lemma}\label{lem:resolvint}
  Let $T$ be an operator on a Banach space with
  $\sigma_p^i(T)=\varnothing$.
  Consider the algebraic direct decomposition
  \[\sum_{\lambda\in\sigma_p(T)}\rsub(\lambda)
    =W_+\dotplus W_-, \qquad 
    W_\pm=\sum_{\lambda\in\sigma_p^\pm(T)}\rsub(\lambda),\]
  and the associated algebraic projections $P_\pm$ onto $W_\pm$.
  Then
  \begin{equation}\label{resolvint}
    \frac{1}{i\pi}\int_{i\bbR}^\prime(T-z)^{-1}x\,dz=P_+x-P_-x\qquad
    \text{for all}\quad x\in\!\sum_{\lambda\in\sigma_p(T)}\!\rsub(\lambda)\,,
  \end{equation}
  where the prime denotes the Cauchy principal value at infinity, that is
  $\int^\prime_{i\bbR}f\,dz=\lim_{r\to\infty}\int_{-ir}^{ir}f\,dz$.
\end{lemma}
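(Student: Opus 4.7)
My approach is to linearise and handle one eigenvalue at a time. Since $\sigma_p^i(T) = \varnothing$ and both sides of \eqref{resolvint} are linear in $x$, it suffices to establish the identity for $x \in \rsub(\lambda)$ with a single $\lambda \in \sigma_p(T)$, $\Real \lambda \neq 0$. The right-hand side then equals $x$ if $\Real \lambda > 0$ and $-x$ if $\Real \lambda < 0$, so the aim is to prove
\[\frac{1}{i\pi}\int_{i\bbR}^\prime (T-z)^{-1} x\, dz = \sign(\Real \lambda)\, x.\]

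For such $x$ I would exploit that $T - \lambda$ is locally nilpotent: choose minimal $n$ with $(T - \lambda)^n x = 0$ and verify, using $T - z = (T - \lambda) + (\lambda - z)$ together with a one-line telescoping argument, the finite expansion
\[(T-z)^{-1} x = \sum_{k=0}^{n-1} \frac{(-1)^k (T-\lambda)^k x}{(\lambda - z)^{k+1}}, \qquad z \neq \lambda.\]
This formula makes the integrand well-defined along all of $i\bbR$ regardless of whether $i\bbR$ lies in $\varrho(T)$, and reduces the task to evaluating scalar integrals. For each $k \geq 1$ the antiderivative $\frac{1}{k(\lambda - z)^k}$ is single-valued, and its values at $\pm ir$ tend to zero, so these terms drop out in the principal-value limit. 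Only the $k = 0$ summand survives.

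It remains to compute
\[\int_{-ir}^{ir} \frac{dz}{\lambda - z} = \log\frac{\lambda + ir}{\lambda - ir}.\]
A short calculation shows that the ratio $(\lambda + ir)/(\lambda - ir)$ tends to $-1$ as $r \to \infty$, approached from the upper half-plane if $\Real \lambda > 0$ and from the lower half-plane if $\Real \lambda < 0$; hence the principal value equals $i\pi\,\sign(\Real\lambda)$. Multiplying by $x/(i\pi)$ gives the desired identity on $\rsub(\lambda)$, and linearity finishes the proof. The main pitfall is the final step: naive subtraction of principal arguments of $\lambda \pm ir$ easily yields the wrong sign when $\Real \lambda < 0$. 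To avoid this I would either work with the continuous branch of $\log$ along the vertical path $z \mapsto \lambda - z$ (which stays entirely in one open half-plane), or use the substitution $z \mapsto -z$ to reduce the left-half-plane case to the right-half-plane one.
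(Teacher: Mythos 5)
Your proposal is correct and follows essentially the same route as the paper: reduce by linearity to a single root subspace $\rsub(\lambda)$, where $T-\lambda$ is nilpotent, and evaluate the resolvent integral there, finding that the higher-order terms vanish in the principal value and the leading term contributes $\pm i\pi$ according to $\sign(\Real\lambda)$. You merely carry out explicitly (via the finite Neumann expansion of $(T-z)^{-1}$ and the branch-of-logarithm computation) what the paper phrases as a ``straightforward calculation'' with the Jordan block $E_\lambda$, including the correct handling of the sign for $\Real\lambda<0$.
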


Note that the integrand in \eqref{resolvint} is well-defined
since $(T-z)^{-1}$ acts, for each $x$, on a
finite sum of finite-dimensional subspaces generated by Jordan chains;
$(T-z)^{-1}x$ is thus continuous in $z$.

\begin{proof}[Proof of the lemma]
  By linearity it suffices to consider
  $x\in \rsub(\lambda)$ and the Jordan chain generated by $x$. 
  With respect to this Jordan chain, $T$ is represented 
  by the matrix
  \begin{equation}\label{jordanblock}
    E_\lambda=\begin{pmatrix}\lambda&1&\\&\sddots&\sddots\\&&\lambda
    \end{pmatrix},
  \end{equation}
  and it suffices to show that
  \[\int_{i\bbR}^\prime(E_\lambda-z)^{-1}dz=\pm i\pi I\]
  for $\Real\lambda\gtrless0$.
  This is a straightforward calculation.
\end{proof}

\begin{lemma}\label{lem:resolvint-estim}
  Let $T$ be an operator with a 
  Riesz basis $(x_k)_{k\in\bbN}$ consisting of Jordan chains.
  If\, $\sigma_p^i(T)=\varnothing$ and
  $\sigma_p(T)$ is contained
  in a strip around the imaginary axis, then
  \begin{equation*}
    \int_{-\infty}^\infty\|(T-it)^{-1}x\|^2\,dt\,\geq\, c\|x\|^2
    \qquad\text{for}\quad x\in\mspan\{x_k\,|\,k\in\bbN\}
  \end{equation*}
  with some constant $c>0$.
\end{lemma}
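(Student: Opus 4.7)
My strategy is to reduce the integral to a single Jordan chain at a time using the Riesz basis structure, and then apply Plancherel's theorem to the Laplace representation of the resolvent. Group the basis $(x_k)$ into its Jordan chains to obtain finite-dimensional $T$-invariant subspaces $(V_\ell)_\ell$, on each of which $T$ acts as $\lambda_\ell I + N_\ell$ with $N_\ell$ the unit shift in the chain basis. The inequality \eqref{eq:rb-ineq} applied within and across chains shows that $(V_\ell)_\ell$ forms a Riesz basis of subspaces of $V$ with some constant $c \geq 1$ as in \eqref{rbs-ineq-fin}. For $x = \sum_\ell y_\ell \in \mspan\{x_k\}$ with $y_\ell \in V_\ell$ (finitely many nonzero), the resolvent terms $(T-it)^{-1} y_\ell$ live in $V_\ell$ (well-defined since $\sigma_p^i(T)=\varnothing$ forces $\lambda_\ell \notin i\bbR$), so
\[\|(T-it)^{-1}x\|^2 \geq c^{-1}\sum_\ell\|(T-it)^{-1}y_\ell\|^2,\qquad \sum_\ell\|y_\ell\|^2 \geq c^{-1}\|x\|^2.\]
Integrating the first inequality (Fubini applies since the sum is finite) reduces the lemma to a chain-wise bound $\int_\bbR\|(T-it)^{-1}y_\ell\|^2\,dt \geq c_0\|y_\ell\|^2$ with $c_0$ independent of $\ell$ and $y_\ell$.

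For the chain-wise estimate, fix $\ell$ and assume $\Real\lambda_\ell > 0$ (the negative case is symmetric). Since $\sigma(T|_{V_\ell}) = \{\lambda_\ell\}$ lies in the open right half-plane, $f(s) = e^{-sT|_{V_\ell}}y_\ell\,\mathbbm{1}_{s>0}$ decays exponentially and lies in $L^2(\bbR;V_\ell)$, and a direct computation gives $\int_\bbR e^{its}f(s)\,ds = (T|_{V_\ell}-it)^{-1}y_\ell$. The vector-valued Plancherel theorem then yields
\[\int_\bbR\|(T-it)^{-1}y_\ell\|^2\,dt = 2\pi\int_0^\infty e^{-2s\Real\lambda_\ell}\|e^{-sN_\ell}y_\ell\|^2\,ds.\]
From $e^{sN_\ell}e^{-sN_\ell} = I$ we get $\|e^{-sN_\ell}y_\ell\| \geq \|e^{sN_\ell}\|^{-1}\|y_\ell\| \geq e^{-s\|N_\ell\|}\|y_\ell\|$, and evaluating the resulting elementary exponential integral gives a lower bound of $\pi\|y_\ell\|^2/(\Real\lambda_\ell + \|N_\ell\|)$.

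The main obstacle is arranging this bound to be uniform in $\ell$: Jordan chain lengths may grow unboundedly, and the hypothesis $\sigma_p^i(T)=\varnothing$ does not preclude $|\Real\lambda_\ell|$ approaching $0$. The strip hypothesis immediately controls the numerator via $|\Real\lambda_\ell| \leq K$. For $\|N_\ell\|$, the crucial observation is that the global Riesz basis inequality \eqref{eq:rb-ineq}, with its constants $m,M$, restricts to the chain basis of each $V_\ell$ with the same constants; since $N_\ell$ acts as the unit shift in this basis, a one-line calculation yields $\|N_\ell\|^2 \leq M/m$ uniformly in $\ell$. Combining, $c_0 = \pi/(K + \sqrt{M/m})$ works, and propagating back through the Riesz-basis-of-subspaces step delivers the lemma with final constant $c_0/c^2$.
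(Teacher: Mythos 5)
Your proof is correct, and it reaches the key estimate by a genuinely different route than the paper. Both arguments first use the Riesz basis structure to decouple the integral into contributions from individual Jordan chains: you do this by promoting the chains to a Riesz basis of subspaces $(V_\ell)$ with constant $c=M/m$ in \eqref{rbs-ineq-fin}, while the paper works with the block-diagonal coefficient matrix $D$ of $(T-it)^{-1}$ and applies the vector inequality \eqref{eq:rb-ineq} directly. The real divergence is in the per-chain lower bound: the paper estimates $\|(E_\lambda-it)^{-1}\nu\|\geq\|\nu\|/(|\lambda-it|+1)$ from the elementary norm bound on a Jordan block and then evaluates $\int_\bbR(|\lambda-it|+1)^{-2}\,dt$ explicitly as an arctangent, whereas you invoke the Laplace representation $(T|_{V_\ell}-it)^{-1}y=\int_0^\infty e^{its}e^{-sT|_{V_\ell}}y\,ds$ (valid since $\Real\lambda_\ell\neq0$) together with the vector-valued Plancherel theorem to convert the $t$-integral into $2\pi\int_0^\infty\|e^{-sT|_{V_\ell}}y\|^2\,ds$, which you then bound below through the semigroup. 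Both approaches must secure uniformity over chains of unbounded length and over eigenvalues with $\Real\lambda$ possibly approaching $0$; the paper gets this from $\|E_\lambda-it\|\leq|\lambda-it|+1$ (the shift has Euclidean norm $1$ in chain coordinates) plus the strip condition, and your observation that \eqref{eq:rb-ineq} restricts to each chain basis with the same constants $m,M$, giving $\|N_\ell\|\leq\sqrt{M/m}$ in the Hilbert-space norm, plays exactly the corresponding role. Your route uses slightly heavier machinery for a constant of the same explicit quality; the paper's computation is more elementary, but the semigroup/Plancherel identity you use is arguably the more conceptual explanation of why the integral controls $\|y\|^2$ from below.
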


\begin{proof}
  Let $x\in\mspan\{x_k\,|\,k\in\bbN\}$. Then there is a finite 
  system $F=(y_1,\ldots,y_n)\subset (x_k)_{k\in\bbN}$
  consisting of Jordan chains
  such that $x=\alpha_1y_1+\ldots+\alpha_ny_n$. 
  $\mspan F$ is a $T$-invariant subspace with basis $F$.
  With respect to $F$, $(T-it)^{-1}$ is represented by a block diagonal 
  matrix $D$ with blocks of the form $(E_\lambda-it)^{-1}$, $E_\lambda$ as in
  \eqref{jordanblock}.
  Hence
  \[(T-it)^{-1}x=\sum_{k=1}^n\alpha_k(T-it)^{-1}y_k=\sum_{j,k=1}^n\alpha_k
    D_{jk}y_j.\]
  Let $m,M>0$ be the constants from \eqref{eq:rb-ineq} for the Riesz basis
  $(x_k)$.
  Putting $\xi=(\alpha_1,\ldots,\alpha_n)$ and using the Euclidean norm on
  $\bbC^n$, we find
  \[\|(T-it)^{-1}x\|^2\geq m\sum_{j=1}^n\Big|\sum_{k=1}^n\alpha_kD_{jk}\Big|^2
    =m\|D\xi\|^2.\]
  Now $\|D\xi\|^2$ is the sum of terms of the form
  $\|(E_\lambda-it)^{-1}\nu\|^2$, one for each Jordan chain in $F$ with
  $\nu$ the
  part of $\xi$ corresponding to that Jordan chain. From
  \[\|E_\lambda-it\|\leq|\lambda-it|+\big\|\left(\begin{smallmatrix}0&1&\\
    &\sddots&\sddots\\&&0\end{smallmatrix}\right)\big\|\leq|\lambda-it|+1\]
  it follows that
  \[\|(E_\lambda-it)^{-1}\nu\|^2\geq \frac{1}{(|\lambda-it|+1)^2}\|\nu\|^2.\]
  With $u=\Real\lambda$, $v=\Imag\lambda$, we calculate
  \begin{align*}
    \int_{-\infty}^\infty\frac{dt}{(|\lambda-it|+1)^2}
    &\geq\int_{-\infty}^\infty\frac{dt}{2(|\lambda-it|^2+1)}
    =\frac{1}{2}\int_{-\infty}^\infty\frac{dt}{1+u^2+(t-v)^2}\\
    &=\frac{1}{2\sqrt{1+u^2}}\arctan\left(\frac{t-v}{\sqrt{1+u^2}}\right)
    \Big|_{t=-\infty}^\infty
    =\frac{\pi}{2\sqrt{1+u^2}}.
  \end{align*}
  Choosing $a>0$ such that $|\Real\lambda|\leq a$ for all 
  $\lambda\in\sigma_p(T)$, we obtain
  \[\int_{-\infty}^\infty\|(T-it)^{-1}x\|^2\,dt\geq m\frac{\pi}{2\sqrt{1+a^2}}
    \|\xi\|^2 \geq\frac{m\pi}{2M\sqrt{1+a^2}}\|x\|^2.\]
  \qedup
\end{proof}

A subspace $U\subset V$ of a Krein space is called \emph{nonnegative},
\emph{positive} and \emph{uniformly positive} if 
$\braket{x}{x}\geq 0$, $>0$
and $\geq\alpha\|x\|^2$, respectively, for all $x\in U\setminus\{0\}$, 
with some constant $\alpha>0$.
Nonpositive, negative and uniformly negative subspaces are defined accordingly.

In the context of dichotomous operators, the following result was obtained in
\cite{langer-tretter}.
\begin{prop}\label{prop:posinv}
  Let $T$ be a nonnegative Hamiltonian operator matrix with
  $\sigma_p^i(T)=\varnothing$, and consider the subspaces
  \[U_\pm=\overline{\sum_{\lambda\in\sigma_p^\pm(T)}\rsub(\lambda)}.\]
  Then $U_+$ is $J_2$-nonnegative and $U_-$ is
  $J_2$-nonpositive.

  If in addition $T$ is uniformly positive, has a Riesz basis of 
  Jordan chains, 
  each eigenvalue has finite multiplicity,
  and $\sigma_p(T)$ is contained in a strip around the imaginary axis,
  then $U_\pm$ is uniformly
  $J_2$-positive/\linebreak[0]-negative.
\end{prop}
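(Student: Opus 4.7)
The plan is to combine the integral representation of the spectral projections in Lemma~\ref{lem:resolvint} with the $J_2$-accretivity of nonnegative Hamiltonians derived in Section~\ref{sec:ham}. For $x\in W_+=\sum_{\lambda\in\sigma_p^+(T)}\rsub(\lambda)$ I set $y_t=(T-it)^{-1}x$, interpreted on the finite-dimensional $T$-invariant subspace generated by the Jordan chains of $x$; this is well-defined because $\sigma_p^i(T)=\varnothing$, and it ensures $y_t\in\mdef(T)$ with $Ty_t=x+ity_t$. Since $[y_t|y_t]\in\bbR$, the $J_2$-accretivity yields the pointwise inequality
\[\Real[x|y_t]=\Real[Ty_t|y_t]-\Real(it[y_t|y_t])=\Real[Ty_t|y_t]\geq 0.\]

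From Lemma~\ref{lem:resolvint}, combined with $P_+x=x$ and $P_-x=0$ for $x\in W_+$, I have $x=\frac{1}{i\pi}\int^\prime_{i\bbR}(T-z)^{-1}x\,dz$. Taking the $J_2$-inner product of this identity with $x$, parametrising by $z=it$, and using both $[y_t|x]=\overline{[x|y_t]}$ and the reality of $[x|x]$, I obtain
\[[x|x]=\frac{1}{\pi}\int_{-\infty}^{\infty}\Real[x|y_t]\,dt\geq 0.\]
Continuity of $[\cdot|\cdot]$ (since $J_2$ is bounded) then transfers the inequality to $U_+=\overline{W_+}$, proving $J_2$-nonnegativity. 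The argument for $U_-$ is identical except that on $W_-$ one has $P_+x-P_-x=-x$, which flips the sign and yields $[x|x]\leq 0$.

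For the second assertion, the uniform positivity assumption upgrades accretivity to $\Real[Tx|x]\geq\gamma\|x\|^2$, so the same manipulation gives $\Real[x|y_t]\geq\gamma\|(T-it)^{-1}x\|^2$, hence
\[[x|x]\geq\frac{\gamma}{\pi}\int_{-\infty}^{\infty}\|(T-it)^{-1}x\|^2\,dt.\]
This is precisely where Lemma~\ref{lem:resolvint-estim} enters: its hypotheses (Riesz basis of Jordan chains, $\sigma_p(T)$ in a strip around $i\bbR$, and $\sigma_p^i(T)=\varnothing$ from the first part) are exactly the extra assumptions, and $W_+$ sits inside the span of the Jordan chain basis. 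Hence the integral is bounded below by $c\|x\|^2$, giving $[x|x]\geq\frac{\gamma c}{\pi}\|x\|^2$ on $W_+$, which passes to $U_+$ by continuity. The case $U_-$ is symmetric.

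The main technical subtlety I expect concerns the careful justification of the resolvent integral manipulations on $W_+$: one must confirm that $(T-it)^{-1}x$, interpreted on the appropriate finite-dimensional $T$-invariant subspace, simultaneously lies in $\mdef(T)$ globally (so the accretivity estimate applies) and depends continuously on $t$ (so the integral and the pairing with $x$ may be interchanged). Once this is handled as in Lemma~\ref{lem:resolvint}, the remaining steps are routine.
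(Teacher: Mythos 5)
Your proof is correct and follows essentially the same route as the paper: the identity $P_+x-P_-x=\pm x$ on $W_\pm$ from Lemma~\ref{lem:resolvint}, the $J_2$-accretivity to get $\Real[(T-it)^{-1}x|x]=\Real[T(T-it)^{-1}x|(T-it)^{-1}x]\geq 0$, and Lemma~\ref{lem:resolvint-estim} for the uniform lower bound. The technical point you flag (well-definedness and continuity of $t\mapsto(T-it)^{-1}x$ on the finite-dimensional invariant subspace) is exactly what the paper addresses in the remark following Lemma~\ref{lem:resolvint}, so nothing is missing.
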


\begin{proof}
  Let $W_\pm=\range(P_\pm)$ as in Lemma~\ref{lem:resolvint}.
  So $U_\pm=\overline{W_\pm}$.
  For $x\in W_+$,
  using the $J_2$-accretivity of $T$, we obtain
  \begin{align*}
    [x|x]&=\Real[P_+x-P_-x|x]=\frac{1}{\pi}\int_\bbR^\prime
    \Real[(T-it)^{-1}x|x]\,dt\\
    &=\frac{1}{\pi}\int_\bbR^\prime
    \Real[T(T-it)^{-1}x|(T-it)^{-1}x]\,dt
    \geq 0.
  \end{align*}
  Thus $W_+$ and hence also $U_+$ are nonnegative. 
  For $x\in W_-$ a similar calculation
  shows that $[x|x]\leq 0$ and hence $U_-$ is nonpositive.

  Now suppose that the additional assumptions on $T$ are satisfied.
  In particular, let  $B,C\geq\gamma>0$.
  For $x\in W_+$,
  using Lemma~\ref{lem:resolvint-estim}, we then 
  obtain
  \begin{align*}
    [x|x]&=\frac{1}{\pi}\int_\bbR^\prime
    \Real[T(T-it)^{-1}x|(T-it)^{-1}x]\,dt\\
    &\geq\frac{\gamma}{\pi}\int_\bbR\|(T-it)^{-1}x\|^2\,dt\geq
    \frac{\gamma c}{\pi}\|x\|^2.
  \end{align*}
  Consequently $U_+$ is uniformly positive.
  Again, a similar reasoning yields that $U_-$ is uniformly negative.
\end{proof}

\begin{remark}
  Proposition \ref{prop:ham-specsym} and
  Theorem~\ref{theo:hypmaxinv} also hold for 
  arbitrary (skew-) symmetric operators on Krein spaces since in the proofs
  the particular
  structure of the Hamiltonian as a block operator matrix was not used.
  Similarly, Proposition~\ref{prop:posinv} holds for arbitrary (uniformly)
  accretive  operators.
\end{remark}

\begin{lemma}\label{lem:hamposrootsub}
  Let $T$ be a nonnegative Hamiltonian operator matrix with 
  \begin{equation}
    C>0 \quad\text{and}\quad
    \ker(A^*-\lambda)\cap\ker B=\{0\}\quad\text{for all}\quad
    \lambda\in\bbC. \label{eq:hamposrootsub-c1}
  \end{equation}
  Then the root subspaces $\rsub(\lambda)$ of\, $T$ are $J_2$-positive 
  for $\Real\lambda>0$
  and $J_2$-negative for $\Real\lambda<0$.
\end{lemma}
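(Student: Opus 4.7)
The plan is to establish the result via a semigroup monotonicity argument on each generalised eigenvector. Fix $x\in\rsub(\lambda)\setminus\{0\}$ with $\Real\lambda>0$ of order $n$, and consider the finite-dimensional, $T$-invariant subspace $W=\mspan\{x,(T-\lambda)x,\ldots,(T-\lambda)^{n-1}x\}\subset\mdef(T)$. The restriction $T|_W$ is a bounded linear operator with spectrum $\{\lambda\}$, so the semigroup $e^{-sT}$ is well defined on $W$ and $e^{-sT}y\to 0$ as $s\to\infty$ for every $y\in W$.

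Since $T$ is $J_2$-accretive,
\[
\frac{d}{ds}[e^{-sT}x\,|\,e^{-sT}x]=-2\Real[Te^{-sT}x\,|\,e^{-sT}x]\leq 0,
\]
so $s\mapsto[e^{-sT}x\,|\,e^{-sT}x]$ is non-increasing with limit $0$, yielding $[x|x]\geq 0$; this gives the $J_2$-nonnegativity of $\rsub(\lambda)$ for $\Real\lambda>0$. To upgrade to strict positivity, assume $[x|x]=0$. Monotonicity and the zero limit force $[e^{-sT}x\,|\,e^{-sT}x]\equiv 0$, hence by differentiation $\Real[Te^{-sT}x\,|\,e^{-sT}x]\equiv 0$. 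Writing $e^{-sT}x=(u(s),v(s))$ and using the identity $\Real[T\xi|\xi]=(Bv|v)+(Cu|u)$ for $\xi=(u,v)\in\mdef(T)$, the nonnegativity of $B$ and $C$ gives $(Cu(s)|u(s))=(Bv(s)|v(s))=0$ for all $s\geq 0$.

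Since $C>0$ we obtain $u(s)\equiv 0$; the first row of the system $\frac{d}{ds}(u(s),v(s))=-T(u(s),v(s))$ then reduces to $Bv(s)\equiv 0$, while the second row gives $v(s)=e^{sA^*}v$, where $A^*$ is understood as its restriction to the finite-dimensional $A^*$-invariant subspace $\{w:(0,w)\in W\}$. Because $v(s)$ is then analytic in $s$, expanding $Bv(s)=0$ in a Taylor series at $s=0$ yields $B(A^*)^k v=0$ for $k=0,1,\ldots,n-1$, and hence $B(A^*+\lambda)^k v=0$ for the same range.

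A straightforward induction using these identities shows $(T-\lambda)^j x=(0,(-1)^j(A^*+\lambda)^j v)$, so the relation $(T-\lambda)^n x=0$ forces $(A^*+\lambda)^n v=0$. Taking the smallest $m\geq 1$ with $(A^*+\lambda)^m v=0$ and setting $w=(A^*+\lambda)^{m-1}v\neq 0$, we have $w\in\ker(A^*+\lambda)\cap\ker B=\ker(A^*-(-\lambda))\cap\ker B=\{0\}$ by hypothesis, a contradiction. Hence $[x|x]>0$. The case $\Real\lambda<0$ is entirely analogous using the forward semigroup $e^{+sT}$, under which $s\mapsto[e^{sT}x\,|\,e^{sT}x]$ is non-decreasing with limit $0$, so $[x|x]\leq 0$; the same contradiction argument upgrades this to $[x|x]<0$. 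The main subtlety is justifying the passage from $Bv(s)\equiv 0$ to the algebraic identities $B(A^*)^k v=0$, which is exactly why we work throughout on the finite-dimensional $T$-invariant subspace $W$, where all domain and analyticity questions become automatic.
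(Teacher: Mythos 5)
Your proof is correct, but it takes a genuinely different route from the paper's. For nonnegativity, the paper invokes Proposition~\ref{prop:posinv}, whose proof rests on the Cauchy principal value resolvent integral of Lemma~\ref{lem:resolvint} along $i\bbR$; you instead exploit the decay of $e^{-sT|_W}$ on the finite-dimensional root subspace and the monotonicity of $s\mapsto[e^{-sT}x|e^{-sT}x]$ under $J_2$-accretivity --- essentially the Laplace-transform dual of the same idea, but more self-contained. For strictness the divergence is larger: the paper runs an induction on the order of the generalised eigenvector, using that $[rx+y|rx+y]=2r\Real[y|x]+[y|y]\geq0$ for all $r\in\bbR$ forces $\Real[y|x]=0$, whence $u=0$, $Bv=0$, and the first component of $y=(T-\lambda)x$ vanishes, contradicting $[y|y]>0$ from the induction hypothesis; you instead show the entire trajectory $e^{-sT}x=(0,v(s))$ collapses into $\{0\}\times H$, extract $B(A^*)^kv=0$ from analyticity, and contradict \eqref{eq:hamposrootsub-c1} directly at the end of the resulting Jordan chain of $-A^*$. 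Your route avoids both the induction and the need for $J_2$-nonnegativity of the whole root subspace in the strictness step, at the price of some ODE bookkeeping. Two small points of precision: the set $\{w\,|\,(0,w)\in W\}$ is not obviously $A^*$-invariant a priori (if $(0,w)\in W$ then only $(Bw,-A^*w)\in W$); what is invariant is the span of the trajectory $\{v(s)\,|\,s\geq0\}$, \emph{after} you know $Bv(s)\equiv0$, since then $A^*v(s)=v'(s)$ lies in that (closed, finite-dimensional) span. You should also note explicitly that $v=v(0)\neq0$ because $x=(0,v)\neq0$, so that the minimal $m\geq1$ exists. Neither point affects the validity of the argument.
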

\begin{proof}
  Suppose that  $\Real\lambda>0$; the proof for $\Real\lambda<0$ is
  analogous.
  From Proposition~\ref{prop:posinv} we know that $\rsub(\lambda)$ is 
  $J_2$-nonnegative.
  Let $x=(u,v)\in \rsub(\lambda)\setminus\{0\}$ and 
  $n\in\bbN$ minimal such that $(T-\lambda)^nx=0$.
  We use induction on $n$ to show that $[x|x]\neq0$ and thus $[x|x]>0$.
  
  For $n=1$ we have
  \[\Real\lambda\cdot[x|x]=\Real[Tx|x]=(Bv|v)+(Cu|u).\]
  If $[x|x]=0$, then $u=0$ since $B$ is nonnegative
  and $C$ positive. Hence
  \[Tx=\pmat{Bv\\-A^*v}=\lambda\pmat{0\\v},\]
  and \eqref{eq:hamposrootsub-c1} yields $v=0$, a contradiction.
  
  For $n>1$ we set $y=(T-\lambda)x$; so $[y|y]>0$ by the induction hypothesis. 
  If $[x|x]=0$, then
  \[0=\Real\lambda\cdot[x|x]=\Real[Tx|x]-\Real[y|x],\]
  i.e.,
  \[\Real[y|x]=(Bv|v)+(Cu|u)\geq 0.\]
  For $r\in\bbR$ let $w=rx+y$. Then $[w|w]=2r\Real[y|x]+[y|y]$.
  Since  $w\in \rsub(\lambda)$ is $J_2$-nonnegative and $r$ is arbitrary,
  this implies $\Real[y|x]=0$, i.e.\ $(Bv|v)+(Cu|u)=0$.
  So again $u=0$ and $(Bv|v)=0$.
  The reasoning from the proof of Proposition~\ref{prop:hamspecgap} then
  yields $Bv=0$.
  Consequently, the first component of $y$ is zero and hence
  $[y|y]=0$, again a contradiction.
\end{proof}

\section{Solutions of the Riccati equation}
\label{sec:sol}

In this section we consider Hamiltonian operator matrices which are
\emph{diagonally dominant},
i.e., $B$ and $C$ are relatively bounded to $A^*$ and $A$
respectively, see \cite{tretter-book};
in particular 
\begin{equation}\label{eq:ddomincl}
  \mdef(A)\subset\mdef(C), \qquad \mdef(A^*)\subset\mdef(B).
\end{equation}
Recall that, e.g., $C$ is \emph{relatively bounded} to $A$ if
$\mdef(A)\subset\mdef(C)$ and there are constants $a,b$ such that
$\|Cu\|\leq a\|u\|+b\|Au\|$ for all $u\in\mdef(A)$.
The infimum of all such $b$ is called the \emph{$A$-bound} of $C$.
Since for a Hamiltonian $T$ the operators $B$ and $C$ are symmetric and hence
closable, $T$ is diagonally dominant if $A$ is closed and
\eqref{eq:ddomincl} holds, see \cite[Remark~2.2.2]{tretter-book}.
In particular, the Hamiltonians from Theorem~\ref{theo:ham-invrbs}
and~\ref{theo:ham-rbeigvec} are 
diagonally dominant.

For an operator $X$ on the Hilbert space $H$ we consider the graph subspace
\[\Gamma(X)=\Bigl\{\pmat{u\\Xu}\,\Big|\,u\in\mdef(X)\Bigr\}.\]
It is well known that invariant graph subspaces of block operator
matrices are connected to Riccati equations. 
Here we have the following relations, see also
\cite[Section~4.3]{wyss-phd}:

\begin{prop}\label{prop:riccequiv}
  Let $T$ be a diagonally dominant Hamiltonian and $X$ an operator on $H$.
  \begin{itemize}
  \item[(i)] $\Gamma(X)$ is $T$-invariant
    if and only if\, $X$ satisfies the Riccati equation
    \begin{equation}\label{eq:req}
      X(Au+BXu)=Cu-A^*Xu \quad\text{for all}\quad
      u\in\mdef(A)\cap X^{-1}\mdef(A^*).
    \end{equation}
    (In particular $Au+BXu\in\mdef(X)$ for 
    $u\in\mdef(A)\cap X^{-1}\mdef(A^*)$.)
  \item[(ii)] If\, $T$ has a finitely spectral Riesz basis of subspaces
    $(V_k)_{k\in\bbN}$ and $\Gamma(X)$ is $T$-invariant and compatible
    with $(V_k)_{k\in\bbN}$, then $\mdef(A)\cap X^{-1}\mdef(A^*)$ is a core for
    $X$.
  \item[(iii)] If\, $X$ is selfadjoint and $\mdef(A)\cap X^{-1}\mdef(A^*)$
    is a core for $X$, then \eqref{eq:req} holds if and only if
    \begin{equation}\label{eq:weakreq}
      (Xu|Av)+(Au|Xv)+(BXu|Xv)-(Cu|v)=0
    \end{equation}
    for all $u,v\in\mdef(A)\cap X^{-1}\mdef(A^*)$.
  \end{itemize}
\end{prop}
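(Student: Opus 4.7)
The plan is to handle the three assertions separately: (i) is a domain-chasing calculation, (ii) is immediate from the Riesz basis decomposition of $\Gamma(X)$, and (iii) is an adjoint argument driven by the core hypothesis, which I expect to be the main obstacle.

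For (i), the computation
\[T\pmat{u\\Xu}=\pmat{Au+BXu\\Cu-A^*Xu}\]
is valid whenever the right-hand side makes sense. Diagonal dominance ($\mdef(A)\subset\mdef(C)$ and $\mdef(A^*)\subset\mdef(B)$) reduces the condition $(u,Xu)\in\mdef(T)$ to $u\in\mdef(A)\cap X^{-1}\mdef(A^*)$; the image $T(u,Xu)$ then lies in $\Gamma(X)$ precisely when $Au+BXu\in\mdef(X)$ and $X(Au+BXu)=Cu-A^*Xu$, i.e.\ \eqref{eq:req}. Both directions of (i) are read off this single equivalence.

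For (ii), compatibility gives $\Gamma(X)=\rbsdec_{k\in\bbN}U_k$ with closed finite-dimensional subspaces $U_k\subset V_k\subset\mdef(T)$; in particular every $(u,Xu)\in U_k$ has its first component in $D:=\mdef(A)\cap X^{-1}\mdef(A^*)$. Given an arbitrary $u\in\mdef(X)$, expand $(u,Xu)=\sum_k(u_k,Xu_k)$ in the Riesz basis decomposition of $\Gamma(X)$; the partial sums $s_n=\sum_{k=0}^n u_k$ lie in $D$, satisfy $s_n\to u$, and $Xs_n\to Xu$. Since $\Gamma(X)$ is closed (as a direct sum in the Riesz basis sense), $X$ is closed, and hence $D$ is a core for $X$.

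For (iii), the forward direction is obtained by pairing \eqref{eq:req} with $v\in D$: selfadjointness of $X$ allows me to move $X$ to the right slot as $Xv$, and the inclusions $Xu\in\mdef(A^*)$, $v\in\mdef(A)$ allow me to rewrite $(A^*Xu\mid v)$ as $(Xu\mid Av)$, yielding \eqref{eq:weakreq}. The reverse direction is where the core hypothesis enters. For fixed $u\in D$, rearrange \eqref{eq:weakreq} to
\[(Au+BXu\mid Xv)=(Cu-A^*Xu\mid v)\qquad\text{for all }v\in D,\]
which says precisely that $y:=Au+BXu$ lies in $\mdef((X|_D)^*)$ with $(X|_D)^*y=Cu-A^*Xu$. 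Since $D$ is a core for the closed operator $X$, $\overline{X|_D}=X$, and selfadjointness of $X$ then gives $(X|_D)^*=X^*=X$. Consequently $y\in\mdef(X)$ and $Xy=Cu-A^*Xu$, recovering \eqref{eq:req}. The only genuinely subtle point in the entire proposition is this identification $(X|_D)^*=X$, which uses both the core property and the selfadjointness in an essential way.
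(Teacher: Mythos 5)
Your proof is correct and follows essentially the same route as the paper's: the same domain chase in (i), the same ``project $\sum_k U_k$ onto the first component'' density argument in (ii), and the same core-driven adjoint step in (iii), where your identification $(X|_D)^*=X^*=X$ is just a more formal phrasing of the paper's extension of the weak identity to all $v\in\mdef(X)$ followed by $Au+BXu\in\mdef(X^*)=\mdef(X)$.
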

\begin{proof}
  (i):  $\Gamma(X)$ is $T$-invariant if and only if for
  all $u\in\mdef(A)\cap\mdef(X)$ with $Xu\in\mdef(A^*)$ there exists
  $v\in\mdef(X)$ such that
  \[T\pmat{u\\Xu}=\pmat{Au+BXu\\Cu-A^*Xu}=\pmat{v\\Xv},\]
  and this is obviously equivalent to \eqref{eq:req}.

  (ii): By assumption, we have 
  $\Gamma(X)=\bigoplus^2_kU_k$ with $U_k\subset\mdef(T)$.
  Then $\sum_kU_k$ is dense in $\Gamma(X)$, and hence
  the subspace $D\subset H$ obtained by projecting $\sum_kU_k$ onto the first
  component is a core for $X$. Moreover
  $D\subset\mdef(A)\cap X^{-1}\mdef(A^*)$ since $\sum_kU_k\subset\mdef(T)$;
  hence $\mdef(A)\cap X^{-1}\mdef(A^*)$ is a core for $X$.

  (iii): Taking the scalar product of \eqref{eq:req} with 
  $v\in\mdef(A)\cap X^{-1}\mdef(A^*)$, we immediately get \eqref{eq:weakreq}.
  On the other hand, \eqref{eq:weakreq} can be rewritten as
  \[(Au+BXu|Xv)=(Cu-A^*Xu|v).\]
  Since $\mdef(A)\cap X^{-1}\mdef(A^*)$ is a core for $X$, 
  this equation holds for all $v\in\mdef(X)$.
  Consequently $Au+BXu\in\mdef(X^{*})=\mdef(X)$ and 
  \eqref{eq:req} follows.
\end{proof}

Graph subspaces are also naturally connected to the Krein space inner
products considered in Section~\ref{sec:ham}, see also
\cite{dijksma-desnoo}.
\begin{lemma}\label{lem:selfadjgraph}
  Consider an operator $X$ on the Hilbert space $H$.
  \begin{itemize}
  \item[(i)] $X$ is Hermitian, i.e.\
    $(Xu|v)=(u|Xv)$ for all $u,v\in\mdef(X)$, if and only if\,
    $\Gamma(X)$ is $J_1$-neutral.
  \item[(ii)] $X$ is selfadjoint if and only if\,
    $\Gamma(X)$ is hypermaximal $J_1$-neutral.
  \end{itemize}
  If\, $X$ is Hermitian, then
  \begin{itemize}
  \item[(iii)]  $X$ is nonnegative and nonpositive if and only if\,
    $\Gamma(X)$ is $J_2$-nonnegative and $J_2$-nonpositive, respectively;
  \item[(iv)] $X$ is bounded and uniformly positive (negative) if and only if\,
    $\Gamma(X)$ is uniformly $J_2$-negative (positive).
  \end{itemize}
\end{lemma}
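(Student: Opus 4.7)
The plan is to reduce every claim to a direct computation of the appropriate Krein-space quadratic form on a graph vector $x=(u,Xu)\in\Gamma(X)$, combined with routine polarisation and adjoint-theoretic arguments. With $J_1$ and $J_2$ as in Section~\ref{sec:ham}, a short matrix computation gives
\[
\braket{x}{x}=(J_1x|x)=2\Imag(Xu|u),\qquad [x|x]=(J_2x|x)=2\Real(Xu|u),
\]
and these two identities drive the whole lemma. For (i), $\Gamma(X)$ is $J_1$-neutral iff $\Imag(Xu|u)=0$ for every $u\in\mdef(X)$; by polarisation this is equivalent to $(Xu|v)=(u|Xv)$ for all $u,v\in\mdef(X)$, i.e.\ to Hermiticity of $X$.

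For (ii), I would compute $\Gamma(X)^{\Jorth}$ explicitly: a pair $(y_1,y_2)\in H\times H$ satisfies $\braket{(y_1,y_2)}{(u,Xu)}=0$ for every $u\in\mdef(X)$ iff $(y_2|u)=(y_1|Xu)$ for all such $u$, which is precisely the defining condition that $y_1\in\mdef(X^*)$ with $y_2=X^*y_1$. Hence $\Gamma(X)^{\Jorth}=\Gamma(X^*)$, and the hypermaximal $J_1$-neutrality condition $\Gamma(X)=\Gamma(X)^{\Jorth}$ reduces exactly to $X=X^*$. Part (iii) is then immediate: Hermiticity makes $(Xu|u)$ real, so $[x|x]=2(Xu|u)$, and since $u\mapsto(u,Xu)$ is a bijection $\mdef(X)\to\Gamma(X)$, the sign conditions on $(Xu|u)$ and on $[x|x]$ translate directly into one another.

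For (iv), the forward direction is a quantitative version of (iii). Writing $\epsilon\in\{+1,-1\}$ for the sign encoded by the hypothesis on $X$ and $\epsilon'\in\{+1,-1\}$ for the sign claimed for $\Gamma(X)$ in the statement, boundedness of $X$ together with $\epsilon(Xu|u)\geq\gamma\|u\|^2$ and the estimate $\|x\|^2=\|u\|^2+\|Xu\|^2\leq(1+\|X\|^2)\|u\|^2$ transports, via $[x|x]=2(Xu|u)$, to a uniform one-sided bound $\epsilon'[x|x]\geq\frac{2\gamma}{1+\|X\|^2}\|x\|^2$ on $\Gamma(X)$, yielding the uniform $J_2$-definiteness as stated. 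The main obstacle is the converse, where one has to extract \emph{both} boundedness of $X$ and its uniform definiteness from the single inequality $\epsilon'[x|x]\geq\alpha\|x\|^2$ on $\Gamma(X)\setminus\{0\}$. My plan is to combine $2|(Xu|u)|=|[x|x]|\geq\alpha(\|u\|^2+\|Xu\|^2)$ with the Cauchy--Schwarz bound $2|(Xu|u)|\leq\|u\|^2+\|Xu\|^2$ to deduce $\alpha\leq 1$ and, after rearrangement, $\alpha\|Xu\|^2\leq(2-\alpha)\|u\|^2$, which gives boundedness $\|X\|\leq\sqrt{(2-\alpha)/\alpha}$. Once $\|X\|$ is in hand, the uniform definiteness of $X$ follows from $2(Xu|u)=[x|x]$, the fixed sign on $\Gamma(X)\setminus\{0\}$, and $\|x\|^2\geq\|u\|^2$, with the sign matching between the two statements dictated by the identity $[x|x]=2(Xu|u)$.
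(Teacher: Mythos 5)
Your overall strategy -- reducing everything to the identities $\braket{x}{x}=2\Imag(Xu|u)$ and $[x|x]=2\Real(Xu|u)$ for $x=(u,Xu)$, plus an explicit computation of $\Gamma(X)^\Jorth$ -- is essentially the paper's, and parts (i) and (iii) are fine. But the converse direction of (iv) contains a genuine error. The inequality you claim, $\alpha\|Xu\|^2\le(2-\alpha)\|u\|^2$, is false: take $X=3I$ on $H=\bbC$, so that $[x|x]=\tfrac{3}{5}\|x\|^2$ on $\Gamma(X)$ and the hypothesis holds with $\alpha=\tfrac12$, yet $\alpha\|Xu\|^2=\tfrac92\|u\|^2>(2-\alpha)\|u\|^2=\tfrac32\|u\|^2$. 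Moreover it cannot follow from the two inequalities you propose to combine: $\alpha(\|u\|^2+\|Xu\|^2)\le 2|(Xu|u)|\le\|u\|^2+\|Xu\|^2$ yields only $\alpha\le1$ and no control whatsoever on $\|Xu\|/\|u\|$ -- the AM--GM weakening of Cauchy--Schwarz discards exactly the information you need. The repair is the paper's one-line argument: from $\alpha\|Xu\|^2\le\alpha\|x\|^2\le|[x|x]|=2|(Xu|u)|\le2\|Xu\|\,\|u\|$ one gets $\|Xu\|\le\tfrac{2}{\alpha}\|u\|$ directly, and $2|(Xu|u)|\ge\alpha\|x\|^2\ge\alpha\|u\|^2$ gives the uniform definiteness.

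Two smaller points. In (ii), the identity $\Gamma(X)^\Jorth=\Gamma(X^*)$ presupposes that $X$ is densely defined, which is not among the hypotheses; the paper first derives dense definedness from hypermaximal neutrality by noting that $(0,w)\in\Gamma(X)^\Jorth=\Gamma(X)$ for every $w\perp\mdef(X)$, forcing $w=0$. Your computation of the orthogonal complement contains this observation implicitly (take $y_1=0$), but it must be made explicit before $X^*$ may be invoked. Finally, on the signs in (iv): the identity $[x|x]=2(Xu|u)$ forces the same-sign correspondence (uniformly positive $X$ corresponds to a uniformly $J_2$-positive graph), which is what the paper's own proof establishes and what is used in Theorem~\ref{theo:bndriccsol}; the cross pairing in the printed statement of (iv) appears to be a typo, so your hedged ``sign dictated by the identity'' lands on the correct version.
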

\begin{proof}
  The assertions (i) and (iii) are immediate.
  For (ii) suppose
  $\Gamma(X)$ is hypermaximal $J_1$-neutral.
  If $w\in\mdef(X)^\perp$ then
  \[\Braket{\pmat{u\\X u}}{\pmat{0\\w}}=i(u|w)=0
    \quad\text{for all }u\in\mdef(X).\]
  Hence $(0,w)\in \Gamma(X)^\Jorth=\Gamma(X)$ and so $w=0$; $X$ is 
  densely defined. Since $X$ is also Hermitian, it is thus symmetric,
  $X\subset X^*$.
  If now $v\in\mdef(X^*)$, then
  \[\Braket{\pmat{u\\X u}}{\pmat{v\\X^* v}}
    =i(u|X^* v)-i(X u|v)=0
    \quad\text{for all }u\in\mdef(X),\]
  which implies $(v,X^*v)\in \Gamma(X)$ and so $v\in\mdef(X)$ and
  $X^*v=X v$. $X$ is thus selfadjoint.
  The converse implication in (ii) is proved similarly.

  (iv): Let $X$ be Hermitian and $\Gamma(X)$ uniformly $J_2$-positive.
  Then
  \[2\|Xu\|\|u\|\geq 2(Xu|u)
    =\Bigl[\pmat{u\\Xu}\Big|\pmat{u\\Xu}\Bigr]
    \geq\alpha\Big\|\pmat{u\\Xu}\Big\|^2
    =\alpha\|u\|^2+\alpha\|Xu\|^2,\]
  implies that $(Xu|u)\geq\frac{\alpha}{2}\|u\|^2$ and
  $\|Xu\|\leq\frac{2}{\alpha}\|u\|$. The proof of the
  other assertions is similar.
\end{proof}

\begin{theo}\label{theo:riccsol}
  Let $T$ be a diagonally dominant, nonnegative Hamiltonian
  operator matrix with $\varrho(T)\cap i\bbR\neq\varnothing$ and
  a finitely spectral Riesz basis of subspaces
  $(V_k)_{k\in\bbN}$.
  Suppose that
  \begin{itemize}
  \item[(a)] $B$ is positive, or
  \item[(b)] there is a connected component $M$ of $\varrho(A)$ such that
    $M\cap\varrho(T)\cap i\bbR\neq\varnothing$ and
    \begin{equation}\label{eq:apcontr}
      \mspan\bigl\{(A-z)^{-1}B^*u\,\big|\,z\in M,\,u\in\mdef(B^*)\bigr\}
      \subset H \quad\text{is dense}.
    \end{equation}
  \end{itemize}
  Then every hypermaximal $J_1$-neutral, $T$-invariant, compatible subspace
  $U$ is the graph $U=\Gamma(X)$ of a selfadjoint operator $X$ satisfying
  the Riccati equation
  \begin{equation}\label{eq:ricceq}
    X(Au+BXu)=Cu-A^*Xu, \qquad u\in\mdef(A)\cap X^{-1}\mdef(A^*),
  \end{equation}
  and $\mdef(A)\cap X^{-1}\mdef(A^*)$ is a core for $X$.
\end{theo}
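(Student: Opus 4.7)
My plan is first to reduce the theorem to showing that $U$ is the graph of a single-valued operator $X$; once that is done, Lemma~\ref{lem:selfadjgraph}(ii) will upgrade hypermaximal $J_1$-neutrality to selfadjointness of $X$, Proposition~\ref{prop:riccequiv}(i) will yield the Riccati equation~\eqref{eq:ricceq} on $\mdef(A) \cap X^{-1}\mdef(A^*)$, and Proposition~\ref{prop:riccequiv}(ii) will take care of the core property. The reduction itself is quick: because $U = U^{\Jorth}$, a direct calculation with $J_1\pmat{0 \\ v} = \pmat{-iv \\ 0}$ shows that $(0,v) \in U$ iff $v \perp \pi_1(U)$. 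So everything comes down to proving that $U_0 := \set{v \in H}{(0,v) \in U}$ equals $\{0\}$, i.e.\ that $\pi_1(U)$ is dense in $H$.

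To this end I would pick $z_0 \in \varrho(T) \cap i\bbR$ (in case~(b), from inside $M$), take $v \in U_0$, and look at $(p,q) := (T - z_0)^{-1}(0,v) \in U \cap \mdef(T)$. Two neutrality identities are available: $\braket{(p,q)}{(p,q)} = 0$ gives at once $\Imag(q|p) = 0$, and $\braket{(p,q)}{T(p,q)} = 0$ (using $T$-invariance of $U$) expands, via the symmetry of $B$ and $C$, to
\[
2\Real(q|Ap) + (Bq|q) - (Cp|p) = 0.
\]
Plugging $Ap = z_0 p - Bq$ from the first resolvent equation into this and exploiting $\Imag(q|p)=0$ together with $z_0 \in i\bbR$ collapses it to $(Bq|q) + (Cp|p) = 0$. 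Nonnegativity of $B$ and $C$ then forces $Bq = 0$ and $Cp = 0$; reinserting these into the equations $(T - z_0)(p,q) = (0,v)$ shows $(T - z_0)(0,q) = (0,v)$, so injectivity of $T - z_0$ on $\mdef(T)$ gives $p = 0$. In particular $(0,q) \in U$ with $q \in \mdef(A^*) \cap \ker B$ and $v = -(A^* + z_0)q$.

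Case~(a) then closes in one line, since $B$ positive and $Bq = 0$ give $q = 0$ and hence $v = 0$. Case~(b) calls for iteration: $q$ itself belongs to $U_0$, so the same analysis yields $q_2 \in U_0 \cap \ker B \cap \mdef(A^*)$ with $q = -(A^* + z_0)q_2$, and continuing produces a sequence $q_0 := v$, $q_n \in U_0 \cap \ker B$ for $n \geq 1$, satisfying $q_{n+1} = -(A^* + z_0)^{-1}q_n$ (well defined since $z_0 \in M \subset \varrho(A)$ forces $-z_0 \in \varrho(A^*)$). The closed subspace $W := \overline{\mspan\set{q_n}{n \geq 1}}$ is then contained in $\ker \overline{B}$ and is $(A^* + z_0)^{-1}$-invariant; by the lemma preceding Proposition~\ref{prop:invcomp} it is $(A^* - z)^{-1}$-invariant for every $z$ in the connected component of $\varrho(A^*)$ containing $-z_0$, i.e.\ on the complex conjugate of $M$. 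A routine adjoint computation gives, for $z \in M$, $u \in \mdef(B^*)$ and $w \in W$,
\[
(w | (A - z)^{-1} B^* u) = ((A^* - \bar z)^{-1} w | B^* u) = (\overline{B}(A^* - \bar z)^{-1} w | u) = 0,
\]
because $(A^* - \bar z)^{-1} w \in W \subset \ker \overline{B}$; condition~\eqref{eq:apcontr} then forces $W = \{0\}$, so $v = -(A^* + z_0)q_1 = 0$.

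The hard part throughout is the collapse to $(Bq|q) + (Cp|p) = 0$, which is the step tying the $J_1$-neutrality of $U$ to the nonnegativity of the Hamiltonian; under~(a) everything afterwards is routine, while under~(b) the extra difficulty is the duality that turns $(A^* + z_0)^{-1}$-invariance of $W \subset \ker \overline{B}$ into a contradiction with the approximate-controllability hypothesis~\eqref{eq:apcontr}. Once $U_0 = \{0\}$ is in hand, the three results cited in the first paragraph deliver all the remaining claims.
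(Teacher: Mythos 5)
Your argument is correct, and the skeleton matches the paper's: reduce everything to showing that $(0,v)\in U$ forces $v=0$, then invoke Lemma~\ref{lem:selfadjgraph}(ii) and Proposition~\ref{prop:riccequiv}(i),(ii) exactly as you do. Your derivation of the key identity $(Bq|q)+(Cp|p)=0$ is a mild variant: the paper gets it in one stroke from the single neutrality relation $\braket{(0,v)}{(p,q)}=0$, which reads $-i(v|p)=0$ and, after substituting the two resolvent equations, collapses directly to $(Cp|p)+(Bq|q)=0$; you reach the same point via the two relations $\braket{(p,q)}{(p,q)}=0$ and $\braket{(p,q)}{T(p,q)}=0$, which costs an extra use of $T$-invariance but is equally valid. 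Case (a) is then identical in both treatments. The genuine divergence is in case (b). The paper lets $it$ range over all of $M\cap\varrho(T)\cap i\bbR$ (a nonempty relatively open, hence accumulating, subset of $i\bbR$), observes that the holomorphic function $z\mapsto((A^*-\bar z)^{-1}v\,|\,B^*\tilde u)$ vanishes there, and extends to all of $M$ by the identity theorem. You instead fix a single $z_0$, iterate the resolvent to produce the chain $q_n$ with $q_{n+1}=-(A^*+z_0)^{-1}q_n$, package it into the closed $(A^*+z_0)^{-1}$-invariant subspace $W\subset\ker\overline{B}$, and propagate the invariance over the whole component $\overline{M}$ of $\varrho(A^*)$ using the unnamed lemma before Proposition~\ref{prop:invcomp}. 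Both mechanisms deliver $(w\,|\,(A-z)^{-1}B^*\tilde u)=0$ for all $z\in M$, so \eqref{eq:apcontr} finishes either way. The paper's route is shorter; yours trades the identity theorem for the resolvent-propagation lemma and formally needs only one point of $M\cap\varrho(T)\cap i\bbR$ rather than an accumulating set (though here that set is automatically open in $i\bbR$, so nothing is really gained). One small point worth making explicit in your write-up: the membership $(p,q)\in U$ uses that the compatible $T$-invariant subspace $U$ is also $(T-z_0)^{-1}$-invariant, which is Lemma~\ref{lem:invcomp}; the paper states this dependence and you only use it tacitly.
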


\begin{proof}
  In view of Proposition~\ref{prop:riccequiv} and Lemma~\ref{lem:selfadjgraph},
  we only need to show that $U$ is a graph subspace.
  For this it is sufficient that $(0,w)\in U$ implies $w=0$.
  Suppose (a) holds and let $it\in\varrho(T)$, $t\in\bbR$.
  Let $(0,w)\in U$ and set $(u,v)=(T-it)^{-1}(0,w)$. Then
  \[(A-it)u+Bv=0,\quad Cu-(A^*+it)v=w.\]
  Since $U$ is $J_1$-neutral and invariant under $(T-it)^{-1}$,
  this implies
  \[0=\Braket{\pmat{0\\w}}{\pmat{u\\v}}=-i(w|u)\]
  and thus
  \[0=(w|u)=(Cu|u)-(v|(A-it)u)=(Cu|u)+(Bv|v).\]
  Since $B$ is positive and $C$ nonnegative, this implies $v=0$, and 
  the reasoning from the proof of Proposition~\ref{prop:hamspecgap}
  also yields $Cu=0$.
  Hence $w=0$.

  In the case of (b), for $it\in M\cap\varrho(T)\cap i\bbR$ we consider
  $u,v$ as above and obtain now $Cu=Bv=0$.
  Since $it\in\varrho(A)$, we have $-it\in\varrho(A^*)$.
  For $\tilde{u}\in\mdef(B^*)$ we get
  \[\bigl((A^*+it)^{-1}w\big|B^*\tilde{u}\bigr)=-(v|B^*\tilde{u})
    =-(Bv|\tilde{u})=0.\]
  Consequently, the function
  \(f(z)=((A^*-\bar{z})^{-1}w|B^*\tilde{u})\),
  which is holomorphic on $M$,
  vanishes on $M\cap\varrho(T)\cap i\bbR$.
  From the identity theorem we thus obtain
  \[0=\bigl((A^*-\bar{z})^{-1}w\big|B^*\tilde{u}\bigr)
    =\bigl(w\,\big|\,(A-z)^{-1}B^*\tilde{u}\bigr)
    \quad\text{for all } z\in M,\]
  and \eqref{eq:apcontr} now implies $w=0$.
\end{proof}

\begin{remark}\label{rem:altricc}
  Applying the previous theorem to the Hamiltonian
  \begin{equation}\label{eq:altham}
    \widetilde{T}=\pmat{-A^*&C\\B&A}
    =\pmat{0&I\\I&0}\pmat{A&B\\C&-A^*}\pmat{0&I\\I&0},
  \end{equation}
  we immediately get the following symmetric statement:
  If $C$ is positive or there is a connected component $M$ of
  $\varrho(A)$ such that
  $M\cap\varrho(T)\cap i\bbR\neq\varnothing$ and
  \begin{equation}\label{eq:apobsv}
    \mspan\bigl\{(A^*-\bar{z})^{-1}C^*v\,\big|\,z\in M,\,v\in\mdef(C^*)
    \bigr\} \subset H \quad\text{is dense},
  \end{equation}
  then a hypermaximal $J_1$-neutral, $T$-invariant, compatible subspace $U$
  is the ``inverse'' graph
  \[U=\Gamma_\mathrm{inv}(Y)=\Bigl\{\pmat{Yv\\v}\,\Big|\,v\in\mdef(Y)\Bigr\}\]
  of a selfadjoint operator $Y$ such that
  \[Y(CYv-A^*v)=AYv+Bv, \qquad v\in\mdef(A^*)\cap Y^{-1}\mdef(A),\]
  and $\mdef(A^*)\cap Y^{-1}\mdef(A)$ is a core for $Y$.
  In particular, if simultaneously $U=\Gamma(X)=\Gamma_\mathrm{inv}(Y)$,
  then $X$ is injective and $X^{-1}=Y$.
\end{remark}

For bounded $B,C$, conditions analogous to \eqref{eq:apcontr}
and \eqref{eq:apobsv} have been used in \cite{langer-ran-rotten}.
In that setting, they are equivalent to the approximate controllability
of the pair $(A,B)$
and the approximate observability of $(A,C)$, respectively.
Here we have the following relation:
\begin{prop}\label{prop:apcontr-equivcond}
  Let $A,B$ be densely defined operators on a Hilbert space $H$
  and $M\subset\varrho(A)$.
  Then for the assertions
  \begin{itemize}
  \item[(i)] $\mspan\bigl\{(A-z)^{-1}B^*v\,\big|\,z\in M,v\in\mdef(B^*)
    \bigr\}\subset H$ dense,
  \item[(ii)] $\ker(A^*-\lambda)\cap\ker B=\{0\}\quad\text{for all}\quad
    \lambda\in\bbC$,
  \end{itemize}
  we have the implication $(i)\Rightarrow(ii)$. 
  If\, $A$ is normal with compact resolvent,
  $\mdef(A)\subset\mdef(B)$, 
  and $M$ has an accumulation point in $\varrho(A)$, 
  then $(i)\Leftrightarrow(ii)$.
\end{prop}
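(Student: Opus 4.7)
The plan is to prove both implications separately, with the substantial work lying in the converse direction.

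For the forward implication $(i)\Rightarrow(ii)$, I would take $u\in\ker(A^*-\lambda)\cap\ker B$ and test it against an arbitrary generator of the dense span in (i). Using the adjoint identity $((A-z)^{-1})^*=(A^*-\bar z)^{-1}$ valid for $z\in\varrho(A)$, together with the standard fact that $\lambda\in\sigma_p(A^*)$ forces $\bar\lambda\in\sigma(A)$ (so $\bar z\neq\lambda$ for $z\in\varrho(A)$), I compute
\[(u|(A-z)^{-1}B^*v) = ((A^*-\bar z)^{-1}u|B^*v) = \frac{1}{\lambda-\bar z}(Bu|v) = 0\]
for every $z\in M$, $v\in\mdef(B^*)$. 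Density of the span in $H$ then immediately yields $u=0$.

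For the reverse implication I would argue by contrapositive: assuming the span in (i) is not dense, produce a nonzero vector lying in some $\ker(A^*-\lambda)\cap\ker B$. Given $u\in H\setminus\{0\}$ orthogonal to every $(A-z)^{-1}B^*v$, I would first observe that $z\mapsto(u|(A-z)^{-1}B^*v)$ is antiholomorphic on $\varrho(A)=\bbC\setminus\sigma(A)$, which is connected since $A$ has compact resolvent and hence purely discrete spectrum. By the identity theorem the vanishing on $M$, which has an accumulation point in $\varrho(A)$, propagates to all of $\varrho(A)$. Next I would plug in the spectral representation $(A-z)^{-1}=\sum_{j}(\mu_j-z)^{-1}P_j$, where $P_j$ are the finite-rank orthogonal projections onto the eigenspaces $\ker(A-\mu_j)$, and extract the residue at each isolated simple pole $\mu_j$, obtaining $(B^*v|P_ju)=0$ for every $j$ and every $v\in\mdef(B^*)$.

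The main obstacle is the concluding step. Since $P_ju\in\ker(A-\mu_j)\subset\mdef(A)\subset\mdef(B)$, rewriting the inner product gives $BP_ju\perp\mdef(B^*)$, but to conclude $BP_ju=0$ one needs $\mdef(B^*)$ dense, i.e., closability of $B$ — a property that holds automatically in the paper's intended applications, where $B$ is symmetric, and is the implicit working hypothesis. Granted it, normality of $A$ yields $\ker(A-\mu_j)=\ker(A^*-\bar\mu_j)$, so $P_ju\in\ker B\cap\ker(A^*-\bar\mu_j)$. Since $u=\sum_jP_ju$ by the resolution of identity and $u\neq0$, some $P_{j_0}u\neq0$, contradicting (ii) with $\lambda=\bar\mu_{j_0}$ and completing the proof.
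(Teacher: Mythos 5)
Your proof is correct and follows essentially the same route as the paper's: the same resolvent--adjoint computation for $(i)\Rightarrow(ii)$, and for the converse the same identity-theorem propagation over the connected set $\varrho(A)$ followed by residue extraction at the eigenvalues $\mu_j$ to get $(B^*v|P_ju)=0$. The closability of $B$ that you flag (needed to pass from $BP_ju\perp\mdef(B^*)$ to $BP_ju=0$) is also used implicitly in the paper, via $\range(B^*)^\perp=\ker\overline{B}$, so you are not introducing a new gap.
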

\begin{proof}
For (i)$\Rightarrow$(ii) consider $A^*u=\lambda u$, $Bu=0$. Then
\[((A-z)^{-1}B^*v|u)=(v|B(A^*-\bar{z})^{-1}
  u)=(v|(\lambda-\bar{z})^{-1}Bu)=0\]
for every $z\in M$, $v\in\mdef(B^*)$ and (i) implies $u=0$. 

Now let $A$ be normal with compact resolvent.
Let $(\lambda_k)_{k\in\bbN}$ be the eigenvalues of $A$
and $P_k$ the corresponding orthogonal projections onto the eigenspaces.
To prove (i), let $u\in H$ be such that
$((A-z)^{-1}B^*v|u)=0$ for all $z\in M$, $v\in\mdef(B^*)$;
we aim to show $u=0$.
The function
\[f(z)=\bigl((A-z)^{-1}B^*v\big|u\bigr)=\sum_{k=0}^\infty\frac{1}{\lambda_k-z}
  (P_kB^*v|u)\]
is holomorphic on $\varrho(A)$ and vanishes on $M$;
hence $f=0$ by the identity theorem.
If we integrate the series along a circle in $\varrho(A)$ enclosing 
exactly one $\lambda_k$, we obtain
\[0=(P_kB^*v|u)=(B^*v|P_ku) \quad\text{for all}\quad
  v\in\mdef(B^*),\]
i.e.\ $P_ku\in\range(B^*)^\perp=\ker \overline{B}$.
Since $P_ku\in\mdef(A)\subset\mdef(B)$, we have in fact $P_ku\in\ker B$.
Since the eigenspaces of $A$ and $A^*$ coincide, (ii) now implies
$P_ku=0$ for all $k\in\bbN$ and thus $u=0$.
\end{proof}

\begin{coroll}\label{coroll:nonnegsol}
  In the situation of Theorem~\ref{theo:riccsol}
  we have $\sigma_p^i(T)=\varnothing$ if and only if
  \[\ker(A-it)\cap\ker C=\{0\}\quad\text{for all}\quad t\in\bbR.\]
  In this case,
  for every sc-set $\sigma\subset\sigma_p(T)$ the associated compatible
  subspace $U_\sigma$ is hypermaximal $J_1$-neutral and thus
  $U_\sigma=\Gamma(X_\sigma)$ with a selfadjoint solution $X_\sigma$ of
  \eqref{eq:ricceq}.
  The solutions $X_\pm$ corresponding to $\sigma=\sigma_p^\pm(T)$ are
  nonnegative/nonpositive.

  If  $C$ is even positive,
  then every $X_\sigma$ is injective.
  In addition, $X_\pm$ is the uniquely determined
  nonnegative/nonpositive selfadjoint solution of \eqref{eq:ricceq} 
  whose graph is compatible with $(V_k)_{k\in\bbN}$.
\end{coroll}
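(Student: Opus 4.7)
My plan is to dispose of the four assertions in order. For the first equivalence, I would combine Proposition~\ref{prop:hamspecgap}(i) with the standing hypotheses of Theorem~\ref{theo:riccsol}: in case (a) $B$ is positive, so $\ker B=\{0\}$ and the second condition $\ker(A^*+it)\cap\ker B=\{0\}$ is automatic; in case (b) the density assumption \eqref{eq:apcontr} together with the implication (i)$\Rightarrow$(ii) of Proposition~\ref{prop:apcontr-equivcond}, applied at $\lambda=-it$, produces the same kernel triviality. Proposition~\ref{prop:hamspecgap}(i) then reduces $\sigma_p^i(T)=\varnothing$ to the single condition $\ker(A-it)\cap\ker C=\{0\}$ stated in the corollary.

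Assuming $\sigma_p^i(T)=\varnothing$, hypothesis \eqref{eq:hypmaxcond} of Theorem~\ref{theo:hypmaxinv} becomes vacuous, and that theorem produces, for each sc-set $\sigma$, a hypermaximal $J_1$-neutral compatible subspace $U_\sigma$. Theorem~\ref{theo:riccsol} then represents $U_\sigma$ as the graph $\Gamma(X_\sigma)$ of a selfadjoint $X_\sigma$ solving \eqref{eq:ricceq}. To get the signs of $X_\pm$, I would invoke Proposition~\ref{prop:posinv} ($U_\pm$ is $J_2$-nonnegative or $J_2$-nonpositive) and translate via Lemma~\ref{lem:selfadjgraph}(iii) to $X_+\geq 0$ and $X_-\leq 0$.

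For the injectivity statement under $C>0$, I plan to apply Remark~\ref{rem:altricc} to the symmetric Hamiltonian $\widetilde{T}$: the role of $B$ in $\widetilde{T}$ is played by $C$, so the dual version of alternative~(a) is fulfilled, and $U_\sigma$ turns out to be simultaneously a graph $\Gamma(X_\sigma)$ and an inverse graph $\Gamma_{\mathrm{inv}}(Y_\sigma)$. This forces $X_\sigma$ to be injective with $X_\sigma^{-1}=Y_\sigma$.

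The uniqueness of $X_+$ is the step where I expect to work hardest. Starting from any nonnegative selfadjoint solution $X$ whose graph is compatible with $(V_k)_{k\in\bbN}$, Proposition~\ref{prop:riccequiv}(i) together with Lemma~\ref{lem:selfadjgraph}(ii),(iii) show that $\Gamma(X)$ is a $T$-invariant, compatible, hypermaximal $J_1$-neutral, $J_2$-nonnegative subspace, and Theorem~\ref{theo:hypmaxinv} identifies it as $U_\sigma$ for some sc-set $\sigma$. Lemma~\ref{lem:hamposrootsub}, invoked with $C>0$ and the kernel condition $\ker(A^*-\lambda)\cap\ker B=\{0\}$ already secured in the first paragraph, makes every $\rsub(\lambda)$ with $\Real\lambda<0$ $J_2$-negative, so the $J_2$-nonnegativity of $\Gamma(X)$ rules out all $\lambda\in\sigma_p^-(T)$ from $\sigma$; being an sc-set, $\sigma$ is then forced to equal $\sigma_p^+(T)$, whence $\Gamma(X)=U_+$ and $X=X_+$. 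The nonpositive case is dual. The main obstacle is this last combinatorial-analytic step, where the sc-set bookkeeping has to be fused with the pointwise $J_2$-sign information on individual root subspaces.
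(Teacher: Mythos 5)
Most of your proposal coincides with the paper's argument: the reduction of Proposition~\ref{prop:hamspecgap}(i) to the single kernel condition (positivity of $B$ in case (a), Proposition~\ref{prop:apcontr-equivcond}(i)$\Rightarrow$(ii) in case (b)), the hypermaximal neutrality of $U_\sigma$ from Theorem~\ref{theo:hypmaxinv} with \eqref{eq:hypmaxcond} vacuous, the signs of $X_\pm$ via Proposition~\ref{prop:posinv} and Lemma~\ref{lem:selfadjgraph}, and the injectivity via Remark~\ref{rem:altricc} are exactly the paper's steps.

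The gap is in the uniqueness argument. Theorem~\ref{theo:hypmaxinv} does \emph{not} assert that every hypermaximal $J_1$-neutral, $T$-invariant, compatible subspace equals $U_\sigma$ for some sc-set $\sigma$; it only constructs such subspaces from sc-sets. In general such a subspace has the form $\overline{\sum_\lambda W_\lambda}$ with $T$-invariant $W_\lambda\subset\rsub(\lambda)$ which may be proper nonzero subspaces of the root subspaces when $\dim\rsub(\lambda)>1$: for a skew-conjugate pair one can choose a one-dimensional $W_\lambda\subsetneq\rsub(\lambda)$ together with a matching one-dimensional $W_{-\overline{\lambda}}$ so that $W_\lambda+W_{-\overline{\lambda}}$ is again hypermaximal neutral in $\rsub(\lambda)+\rsub(-\overline{\lambda})$. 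So the step ``$\sigma$ is forced to equal $\sigma_p^+(T)$, whence $\Gamma(X)=U_+$'' is not available. What your $J_2$-sign argument (via Lemma~\ref{lem:hamposrootsub}, whose hypotheses you verify correctly) actually yields is only $W_\lambda=\{0\}$ for $\Real\lambda<0$, hence the \emph{inclusion} $\Gamma(X)\subset U_+=\Gamma(X_+)$, i.e.\ $X\subset X_+$. The missing final step, which the paper supplies, is that a selfadjoint operator has no proper selfadjoint extension, so $X\subset X_+$ with both selfadjoint forces $X=X_+$. Note also that the paper's uniqueness argument never uses hypermaximal neutrality of $\Gamma(X)$ — only selfadjointness, nonnegativity and compatibility — so your detour through Theorem~\ref{theo:hypmaxinv} at this point is both unjustified and unnecessary.
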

\begin{proof}
  The characterisation of $\sigma_p^i(T)=\varnothing$ is immediate from
  Proposition~\ref{prop:hamspecgap} and~\ref{prop:apcontr-equivcond}.
  If $\sigma_p^i(T)=\varnothing$, then
  condition \eqref{eq:hypmaxcond}
  in Theorem~\ref{theo:hypmaxinv} is trivially satisfied and hence
  $U_\sigma$ is hypermaximal $J_1$-neutral.
  The subspace $U_\pm$ associated with $\sigma_p^\pm(T)$ is
  $J_2$-nonnegative/-nonpositive by Proposition~\ref{prop:posinv}
  and hence $X_\pm$ is nonnegative/nonpositive by
  Lemma~\ref{lem:selfadjgraph}.

  Now suppose that $C>0$. Then
  $X_\sigma$ is injective by Remark~\ref{rem:altricc}.
  Let $X$ be nonnegative selfadjoint and
  $\Gamma(X)=\bigoplus^2_kU_k$ with $U_k\subset V_k$ $T$-invariant.
  Then each $U_k$ is $J_2$-nonnegative and the span of certain root vectors
  of $T$.
  By Proposition~\ref{prop:apcontr-equivcond}, Lemma~\ref{lem:hamposrootsub}
  can be applied and yields that
  $U_k$ is the span of root vectors corresponding to
  eigenvalues in the right half-plane. Therefore $U_k\subset U_+$ and
  hence $\Gamma(X)\subset U_+$.
  Consequently $X\subset X_+$ and thus $X=X_+$ since both operators are
  selfadjoint.
  The proof of the uniqueness of $X_-$ is analogous.
\end{proof}

\section{Bounded solutions}
\label{sec:bndsol}

Consider a diagonally dominant Hamiltonian $T$ and the decomposition
\begin{equation}\label{eq:hamdecomp}
  T=G+S, \quad G=\pmat{A&0\\0&-A^*},\quad S=\pmat{0&B\\C&0}.
\end{equation}
\begin{definition}
  We say that $T$ is \emph{r0-diagonally dominant} (\emph{r} stands
  for resolvent)
  if there is a sequence $(z_k)$ in $\varrho(G)$ such that 
  \[\lim_{k\to\infty}\|S(G-z_k)^{-1}\|=0.\]
\end{definition}

\begin{lemma}\label{lem:a0dd}
  \begin{itemize}
  \item[(i)] If\, $T$ is r0-diagonally dominant,
    then $S$ is relatively bounded to $G$ with $G$-bound $0$.
    Moreover
    \[z_k\in\varrho(T) \qquad\text{with}\qquad
      (T-z_k)^{-1}=(G-z_k)^{-1}\bigl(I+S(G-z_k)^{-1}\bigr)^{-1}\]
    whenever $\|S(G-z_k)^{-1}\|<1$; 
    in particular $\varrho(T)\neq\varnothing$.
  \item[(ii)] If\, $S$ is relatively bounded to $G$ with $G$-bound $0$,
    and there is a sequence $(z_k)$ in $\varrho(G)$ and a constant
    $c>0$ such that
    \[\lim_{k\to\infty}|z_k|=\infty \qquad\text{and}\qquad
      \|(G-z_k)^{-1}\|\leq\frac{c}{|z_k|},\]
    then $T$ is  r0-diagonally dominant.
  \end{itemize}
\end{lemma}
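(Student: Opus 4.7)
My plan is to treat the two parts separately, with (i) reducing to a Neumann series argument once the domains are identified, and (ii) being a direct estimate using the defining inequality of relative boundedness with bound zero.

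For (i), the first step is to observe that if $T$ is r0-diagonally dominant, then for sufficiently large $k$ we have $\|S(G-z_k)^{-1}\|<1$, which is a bounded operator on all of $V$. For any $x\in\mdef(G)$, setting $b_k=\|S(G-z_k)^{-1}\|$ and writing $Sx=S(G-z_k)^{-1}(G-z_k)x$ yields
\[\|Sx\|\leq b_k\|(G-z_k)x\|\leq b_k\|Gx\|+b_k|z_k|\,\|x\|.\]
Since $b_k\to 0$, this bound achieves arbitrarily small coefficient in front of $\|Gx\|$, so $S$ is relatively bounded to $G$ with $G$-bound $0$. In particular, $\mdef(G)\subset\mdef(S)$ and hence $\mdef(T)=\mdef(G)$. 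Next, on $\mdef(G)$ we factor
\[T-z_k=(G-z_k)+S=\bigl(I+S(G-z_k)^{-1}\bigr)(G-z_k).\]
When $b_k<1$, the bracket is boundedly invertible by a Neumann series, and a straightforward check on both sides (using the fact that $(G-z_k)^{-1}$ maps $V$ into $\mdef(G)=\mdef(T)$) shows that $T-z_k$ is bijective with the stated inverse. Since $b_k\to 0$, infinitely many $z_k$ land in $\varrho(T)$, giving $\varrho(T)\neq\varnothing$.

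For (ii), the plan is to bound $\|S(G-z_k)^{-1}y\|$ directly. Given $\eps>0$, choose $a_\eps$ such that $\|Sx\|\leq a_\eps\|x\|+\eps\|Gx\|$ for all $x\in\mdef(G)$. For $y\in V$, set $x=(G-z_k)^{-1}y\in\mdef(G)$; then $Gx=y+z_kx$, so using $\|(G-z_k)^{-1}\|\leq c/|z_k|$,
\[\|x\|\leq\frac{c}{|z_k|}\|y\|,\qquad \|Gx\|\leq\|y\|+|z_k|\,\|x\|\leq(1+c)\|y\|.\]
Inserting these bounds gives
\[\|S(G-z_k)^{-1}y\|\leq\Bigl(\frac{a_\eps c}{|z_k|}+\eps(1+c)\Bigr)\|y\|.\]
Letting $k\to\infty$, the first summand vanishes because $|z_k|\to\infty$, so $\limsup_k\|S(G-z_k)^{-1}\|\leq\eps(1+c)$. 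As $\eps>0$ was arbitrary, the limit is $0$.

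The only mildly delicate point I foresee is in (i), namely the identification $\mdef(T)=\mdef(G)$ which is needed to legitimately factor $T-z_k$ through $(G-z_k)$; this requires first extracting the $G$-bound $0$ estimate before doing the Neumann series manipulation, rather than after, so the order of steps matters. Everything else is a routine Neumann/relative-bound computation.
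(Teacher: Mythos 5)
Your proof is correct and follows essentially the same route as the paper: part (i) via the estimate $\|Sx\|\leq\|S(G-z_k)^{-1}\|(\|Gx\|+|z_k|\|x\|)$ combined with the Neumann series factorisation $T-z_k=(I+S(G-z_k)^{-1})(G-z_k)$, and part (ii) via the bounds $\|(G-z_k)^{-1}\|\leq c/|z_k|$ and $\|G(G-z_k)^{-1}\|\leq 1+c$ inserted into the relative-boundedness inequality. Your explicit attention to the domain identification $\mdef(T)=\mdef(G)$ is a sensible addition that the paper leaves implicit.
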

\begin{proof}
  (i) is a consequence of the estimate
  \[\|Sx\|\leq\|S(G-z_k)^{-1}\|\|(G-z_k)x\|
    \leq\|S(G-z_k)^{-1}\|\bigl(\|Gx\|+|z_k|\|x\|\bigr)\]
  and a Neumann series argument.
  (ii) follows from
  \[\|G(G-z_k)^{-1}\|=\|I+z_k(G-z_k)^{-1}\|\leq 1+c\]
  and
  \[\|S(G-z_k)^{-1}\|\leq a\|(G-z_k)^{-1}\|+b\|G(G-z_k)^{-1}\|
    \leq\frac{ac}{|z_k|}+b(1+c),\]
  where $b>0$ can be chosen arbitrarily small.
\end{proof}

Since $p$-subordination with $p<1$ implies relative
boundedness with relative bound $0$, see e.g.\ 
\cite[Section~3.2]{wyss-phd},
the previous lemma yields that the Hamiltonians from 
Theorem~\ref{theo:ham-invrbs} and~\ref{theo:ham-rbeigvec} are
r0-diagonally dominant.

\begin{prop}\label{prop:bndreq}
  Let $T$ be an r0-diagonally dominant Hamiltonian
  and $X:H\to H$ bounded such that $\Gamma(X)$ is $T$- and 
  $(T-z)^{-1}$-invariant for all $z\in\varrho(T)$. 
  Then $X\mdef(A)\subset\mdef(A^*)$ and 
  \begin{equation}\label{eq:prop:bndreq}
  A^*Xu+XAu+XBXu-Cu=0, \qquad u\in\mdef(A).
  \end{equation}
  Moreover
  \[\sigma(A+BX)=\sigma(T|_{\Gamma(X)}), \quad
    \sigma_p(A+BX)=\sigma_p(T|_{\Gamma(X)}),\]
  and for every $\lambda\in\sigma_p(A+BX)$ the root subspace of\, $A+BX$
  corresponding to $\lambda$ is the projection onto
  the first component of the root subspace
  of\, $T|_{\Gamma(X)}$ corresponding to $\lambda$.
\end{prop}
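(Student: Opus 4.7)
The plan is to use r0-diagonal dominance to promote the tautological inclusion $\mdef(A)\cap X^{-1}\mdef(A^*)\subset\mdef(A)$ to an equality, and then read off the Riccati equation from Proposition~\ref{prop:riccequiv}(i) and the spectral correspondence from a similarity via the graph map.

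Since $X$ is bounded, $\Gamma(X)$ is closed and the linear map $\Phi\colon H\to\Gamma(X)$, $u\mapsto(u,Xu)$, is a topological isomorphism. By Lemma~\ref{lem:a0dd}(i), r0-diagonal dominance furnishes a sequence $(z_k)$ with $z_k\in\varrho(G)\cap\varrho(T)$ and $\|S(G-z_k)^{-1}\|\to 0$, together with the Neumann formula $(T-z_k)^{-1}=(G-z_k)^{-1}(I+S(G-z_k)^{-1})^{-1}$. Writing $B_z=B(A^*+z)^{-1}$ and $C_z=C(A-z)^{-1}$, a block computation shows that the first component of $(T-z_k)^{-1}(u,Xu)$ equals $(A-z_k)^{-1}\Psi_{z_k}u$, where $\Psi_z=I+B_z(I+C_zB_z)^{-1}(X-C_z)$ is a bounded operator on $H$. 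Since $\|B_{z_k}\|,\|C_{z_k}\|\to 0$ gives $\|\Psi_{z_k}-I\|\to 0$, a Neumann series argument makes $\Psi_{z_k}$ boundedly invertible for large $k$, so these first components range over all of $(A-z_k)^{-1}(H)=\mdef(A)$. At the same time, $(T-z_k)^{-1}$ maps into $\mdef(T)=\mdef(A)\times\mdef(A^*)$ by diagonal dominance and preserves $\Gamma(X)$ by hypothesis, so each such output has second component $Xv$ with $Xv\in\mdef(A^*)$. Combining, $X\mdef(A)\subset\mdef(A^*)$.

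Once this domain identity is in hand, the Riccati equation \eqref{eq:prop:bndreq} is exactly Proposition~\ref{prop:riccequiv}(i) on the now-enlarged domain $\mdef(A)\cap X^{-1}\mdef(A^*)=\mdef(A)$. For the spectral statements, diagonal dominance together with $X\mdef(A)\subset\mdef(A^*)\subset\mdef(B)$ makes the natural domain of $A+BX$ equal to $\mdef(A)$, and a direct computation using the Riccati equation gives the intertwining $T|_{\Gamma(X)}\circ\Phi=\Phi\circ(A+BX)$. Since $\Phi$ is a topological isomorphism, the operators are similar, so their spectra, point spectra and root subspaces correspond under $\Phi^{-1}$, whose action on root subspaces is precisely projection onto the first component.

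The principal obstacle is the bounded invertibility of $\Psi_{z_k}$: a priori, invariance of $\Gamma(X)$ under $T$ and its resolvents only delivers $\mdef(A)\cap X^{-1}\mdef(A^*)\subset\mdef(A)$, and the reverse inclusion requires the surjectivity of $u\mapsto(A-z_k)^{-1}\Psi_{z_k}u$ onto $\mdef(A)$. Precisely this is what r0-smallness supplies, via a standard Neumann argument. None of the remaining steps should present any real difficulty.
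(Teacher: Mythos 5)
Your proof is correct and takes essentially the same route as the paper: both exploit the Neumann series $(T-z_k)^{-1}=(G-z_k)^{-1}(I+S(G-z_k)^{-1})^{-1}$ and the smallness of $S(G-z_k)^{-1}$ to show that the first components of $(T-z_k)^{-1}\Gamma(X)$ exhaust $\mdef(A)$, forcing $\mdef(A)\cap X^{-1}\mdef(A^*)=\mdef(A)$, after which the Riccati equation follows from Proposition~\ref{prop:riccequiv}(i) and the spectral statements from the similarity $\varphi^{-1}T|_{\Gamma(X)}\varphi=A+BX$. The only difference is bookkeeping: the paper phrases the key step abstractly as $z_k\in\varrho(E-F)$ with $E-F=A|_{\mdef(A)\cap X^{-1}\mdef(A^*)}$ via $\|F(E-z_k)^{-1}\|<1$, whereas you carry out the explicit $2\times2$ block inversion to exhibit the first component as $(A-z_k)^{-1}\Psi_{z_k}u$ with $\Psi_{z_k}$ invertible.
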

\begin{proof}
  We consider the isomorphism $\varphi$ and the projection $\proj_1$
  given by
  \begin{equation*}
    \begin{aligned}
      \varphi:H&\to\Gamma(X),\\ u&\mapsto(u,Xu),
    \end{aligned}\qquad\text{and}\qquad
    \begin{aligned}
      \proj_1:H\times H&\to H,\\ (u,v)&\mapsto u.
    \end{aligned}
  \end{equation*}
  Hence $\varphi^{-1}=\proj_1|_{\Gamma(X)}$.
  Using the decomposition \eqref{eq:hamdecomp} and
  writing $E=\varphi^{-1}T|_{\Gamma(X)}\varphi$ and $F=\proj_1S\varphi$,
  we have
  \[E-F=\proj_1T\varphi-\proj_1S\varphi=\proj_1G\varphi
    =A|_{\mdef(A)\cap X^{-1}\mdef(A^*)}.\]
  The  $(T-z)^{-1}$-invariance of $\Gamma(X)$ implies that
  $\varphi^{-1}(T-z)^{-1}\varphi=(E-z)^{-1}$ for
  $z\in\varrho(T)$.
  Since $T$ is r0-diagonally dominant,
  we can now find $z\in\varrho(G)\cap\varrho(T)$ such that
  \begin{align*}
    F(E-z)^{-1}&=\proj_1S\varphi\circ\varphi^{-1}(T-z)^{-1}\varphi
    =\proj_1S(T-z)^{-1}\varphi\\
    &=\proj_1S(G-z)^{-1}\bigl(I+S(G-z)^{-1}\bigr)^{-1}\varphi
  \end{align*}
  and $\|F(E-z)^{-1}\|<1$.
  Consequently $z\in\varrho(E-F)=\varrho(A|_{\mdef(A)\cap X^{-1}\mdef(A^*)})$.
  Since also $z\in\varrho(A)$, we obtain
  $\mdef(A)\cap X^{-1}\mdef(A^*)=\mdef(A)$, i.e.\
  $X\mdef(A)\subset\mdef(A^*)$. 
  The Riccati equation \eqref{eq:prop:bndreq}
  then follows from \eqref{eq:req}.
  Moreover, we have
  \[\varphi^{-1}T|_{\Gamma(X)}\varphi=A+BX,\]
  which immediately implies the equality of the spectra and point spectra
  of $T|_{\Gamma(X)}$ and $A+BX$, and that $\varphi$ maps the root subspaces
  of $A+BX$ bijectively onto the corresponding ones of $T|_{\Gamma(X)}$.
\end{proof}

\begin{theo}\label{theo:bndsol-charac}
  Let $T$ be an r0-diagonally dominant Hamiltonian with
  compact resolvent and
  a finitely spectral Riesz basis of subspaces $(V_k)_{k\in\bbN}$.
  Let $X:H\to H$ be bounded.
  Then $\Gamma(X)$ is $T$-invariant and 
  compatible with $(V_k)_{k\in\bbN}$ if and only if 
  $X\mdef(A)\subset\mdef(A^*)$ and $X$ is a solution of the 
  Riccati equation
  \begin{equation}\label{eq:bndreq}
  A^*Xu+XAu+XBXu-Cu=0, \qquad u\in\mdef(A).
  \end{equation}
\end{theo}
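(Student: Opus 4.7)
\begin{proof*}[Proof plan]

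The forward implication will essentially follow from results already in place: if $\Gamma(X)$ is $T$-invariant and compatible with $(V_k)_{k\in\bbN}$, Lemma~\ref{lem:invcomp}(iii) promotes this to $(T-z)^{-1}$-invariance for every $z\in\varrho(T)$, whereupon Proposition~\ref{prop:bndreq} yields $X\mdef(A)\subset\mdef(A^*)$ together with \eqref{eq:bndreq}.

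For the converse, my plan is to exploit Proposition~\ref{prop:invcomp}: since $T$ has a compact resolvent and $\Gamma(X)$ is closed because $X$ is bounded, it will suffice to produce a single $z\in\varrho(T)$ with $(T-z)^{-1}\Gamma(X)\subset\Gamma(X)$; both $T$-invariance and compatibility with $(V_k)$ will then come for free. The choice of $z$ is dictated by the r0-diagonal dominance: Lemma~\ref{lem:a0dd}(i) will let me pick $z=z_k\in\varrho(G)\cap\varrho(T)$ with $\|S(G-z)^{-1}\|$ as small as desired. Since $\varrho(G)=\varrho(A)\cap\varrho(-A^*)$ this also gives $-z\in\varrho(A^*)$, and the bound $\|XB(A^*+z)^{-1}\|\leq\|X\|\,\|S(G-z)^{-1}\|<1$ can be arranged.

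The main step, and the one I expect to be the only real obstacle, is the verification that $(T-z)^{-1}$ preserves $\Gamma(X)$. Given $(u_0,Xu_0)\in\Gamma(X)$ I will set $(u,v)=(T-z)^{-1}(u_0,Xu_0)\in\mdef(A)\times\mdef(A^*)$ and define $w=v-Xu$, which lies in $\mdef(A^*)$ thanks to $X\mdef(A)\subset\mdef(A^*)$. Substituting $v=Xu+w$ into the two coordinate equations
\[(A-z)u+Bv=u_0,\qquad Cu-(A^*+z)v=Xu_0,\]
then applying $X$ to the first and using \eqref{eq:bndreq} to replace $Cu-A^*Xu$ by $X(Au+BXu)$ in the second, all $u$-dependent terms should cancel and leave $(A^*+XB+z)w=0$. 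The factorisation $A^*+XB+z=(I+XB(A^*+z)^{-1})(A^*+z)$ combined with a Neumann series makes this operator a bijection $\mdef(A^*)\to H$, forcing $w=0$, hence $v=Xu$ and $(u,v)\in\Gamma(X)$. Proposition~\ref{prop:invcomp} then concludes.

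The delicate point is the algebraic cancellation producing $(A^*+XB+z)w=0$: it is precisely the Riccati equation that compensates for the off-diagonal coupling between the two coordinate equations, and without it there would be no reason for $w$ to solve a homogeneous equation. The subsequent Neumann argument is a routine perturbation estimate made available exactly by the r0-dominance hypothesis, which in the unbounded setting takes over the role that boundedness of $B,C$ played in the earlier treatments \cite{kuiper-zwart,langer-ran-rotten}.

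\end{proof*}
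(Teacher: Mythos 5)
Your plan is correct, and the forward direction coincides with the paper's (Lemma~\ref{lem:invcomp} to upgrade to resolvent invariance, then Proposition~\ref{prop:bndreq}). For the converse both arguments share the same skeleton -- exhibit a single $z\in\varrho(T)$ for which $\Gamma(X)$ is $(T-z)^{-1}$-invariant and invoke Proposition~\ref{prop:invcomp} -- but the way you produce that $z$ is genuinely different. The paper conjugates $T|_{\Gamma(X)}$ to $A+BX$ via $\varphi:u\mapsto(u,Xu)$, proves that $W=(G-z)\varphi(\mdef(A))$ is closed so that $(\proj_1|_W)^{-1}$ is bounded by the open mapping theorem, deduces $\|BX(A-z)^{-1}\|<1$ and hence $z\in\varrho(A+BX)=\varrho(T|_{\Gamma(X)})$, and gets invariance from $z\in\varrho(T)\cap\varrho(T|_{\Gamma(X)})$. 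You instead verify invariance by hand on the second component: writing $(u,v)=(T-z)^{-1}(u_0,Xu_0)$ and $w=v-Xu\in\mdef(A^*)$, the Riccati identity $Cu-A^*Xu=XAu+XBXu$ cancels all $u$-terms between $X\cdot(\text{first equation})$ and the second equation and leaves $(A^*+XB+z)w=0$ (I checked the algebra; it works, using $\mdef(A^*)\subset\mdef(B)$ and $Xu\in\mdef(A^*)$ to make every term well defined), and injectivity follows from the factorisation $(I+XB(A^*+z)^{-1})(A^*+z)$. Your route has the advantage that the smallness estimate $\|XB(A^*+z_k)^{-1}\|\leq\|X\|\,\|S(G-z_k)^{-1}\|\to0$ falls straight out of the r0-dominance and the block structure of $S(G-z)^{-1}$, so you avoid the closedness-of-$W$/open-mapping detour and any need to control $(\proj_1|_W)^{-1}$ along the sequence $z_k$; what you give up is the explicit identification $\sigma(T|_{\Gamma(X)})=\sigma(A+BX)$ that the paper's route produces as a by-product (though that is already recorded in Proposition~\ref{prop:bndreq} and not needed for this theorem).
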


\begin{proof}
  If $\Gamma(X)$ is invariant and compatible, then the assertion follows
  from Proposition~\ref{prop:bndreq}.
  So suppose that $X\mdef(A)\subset\mdef(A^*)$ and that \eqref{eq:bndreq}
  holds. In view of 
  Proposition~\ref{prop:invcomp} it suffices to find $z\in\varrho(T)$ such 
  that $\Gamma(X)$ is $(T-z)^{-1}$-invariant.
  Let $\varphi$ and $\proj_1$ be as above.
  Let $z\in\varrho(G)$, in particular $z\in\varrho(A)$.
  Since $X\mdef(A)\subset\mdef(A^*)$, we have 
  \[A-z=\proj_1(G-z)\varphi.\]
  Set $W=(G-z)\varphi(\mdef(A))$.
  Then $\proj_1$ maps $W$ bijectively onto $H$ and we have
  \[(A-z)^{-1}=\varphi^{-1}(G-z)^{-1}(\proj_1|_W)^{-1}.\]

  We want to show that $W$ is closed. Let $x_n\in W$ with $x_n\to x$
  as $n\to\infty$ and set $y_n=(G-z)^{-1}x_n$.
  Then $y_n\to (G-z)^{-1}x$ as well as
  \[y_n=\varphi(A-z)^{-1}\proj_1x_n\to\varphi(A-z)^{-1}\proj_1x.\]
  Consequently $(G-z)^{-1}x=\varphi(A-z)^{-1}\proj_1x$ and hence
  $x\in W$. The open mapping theorem now implies that
  $(\proj_1|_W)^{-1}$ is bounded.
  Since
  \[BX(A-z)^{-1}=\proj_1S\varphi \circ 
    \varphi^{-1}(G-z)^{-1}(\proj_1|_W)^{-1}
    =\proj_1S(G-z)^{-1}(\proj_1|_W)^{-1}\]
  and due to the r0-diagonally dominance of $T$, we can
  find $z\in\varrho(G)\cap\varrho(T)$ such that
  $\|BX(A-z)^{-1}\|<1$, which in turn yields $z\in\varrho(A+BX)$.
  Since \eqref{eq:bndreq} holds, $\Gamma(X)$ is $T$-invariant and
  $\varphi^{-1}T|_{\Gamma(X)}\varphi=A+BX$; in particular
  $\varrho(T|_{\Gamma(X)})=\varrho(A+BX)$.
  We end up with $z\in\varrho(T)\cap\varrho(T|_{\Gamma(X)})$, which implies
  that $\Gamma(X)$ is $(T-z)^{-1}$-invariant.
\end{proof}

\begin{remark}\label{rem:weakbndreq}
  Let $X$ be bounded and selfadjoint.
  Then $X\mdef(A)\subset\mdef(A^*)$ and
  \[A^*Xu+XAu+XBXu-Cu=0, \qquad u\in\mdef(A),\]
  if and only if $X\mdef(A)\subset\mdef(B)$ and
  \[(Xu|Av)+(Au|Xv)+(BXu|Xv)-(Cu|v)=0, \quad u,v\in\mdef(A).\]
  Indeed, the second equation implies that $(Xu|Av)$ is bounded in $v$;
  hence $Xu\in\mdef(A^*)$ and the first equation follows.
\end{remark}

\begin{lemma}\label{lemma:splitXbound}
  Let $X_+$, $X_-$ be bounded selfadjoint operators on a
  Hilbert space $H$ with $X_+$ uniformly positive and $X_-$ nonpositive.
  If\, $X$ is a Hermitian operator on $H$ satisfying
  $\mdef(X)=D_+\dotplus D_-$, $X|_{D_\pm}=X_\pm|_{D_\pm}$,
  then $X$ is bounded.
\end{lemma}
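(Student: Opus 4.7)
The plan is to realise the algebraically direct decomposition $\mdef(X)=D_+\dotplus D_-$ as an orthogonal decomposition with respect to an inner product on $H$ equivalent to the original one, and then estimate $\|Xu\|$ via the triangle inequality. To this end I set $Y=X_+-X_-$. Since by hypothesis $X_+\geq\alpha I$ for some $\alpha>0$ and $X_-\leq 0$, the operator $Y$ is bounded, selfadjoint and uniformly positive, hence boundedly invertible. Consequently $(u|v)_Y:=(Yu|v)$ defines an inner product on $H$ whose induced norm $\|\cdot\|_Y$ satisfies $\alpha\|u\|^2\leq\|u\|_Y^2\leq\|Y\|\,\|u\|^2$.

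The crucial step, which I expect to be the main obstacle, is to exploit the Hermitian property of $X$ in order to deduce that $D_+$ and $D_-$ are $Y$-orthogonal. For $u=u_++u_-$ and $v=v_++v_-$ in $\mdef(X)$, a direct expansion of $(Xu|v)$ and $(u|Xv)$ using $Xu=X_+u_++X_-u_-$ and the selfadjointness of $X_\pm$ cancels the four ``diagonal'' contributions and yields the identity
\[(Xu|v)-(u|Xv)=(Yu_+|v_-)-(Yu_-|v_+).\]
Since $X$ is Hermitian the left-hand side vanishes for all admissible $u_\pm,v_\pm$; specialising to $u_-=0$ and $v_+=0$ (which is allowed because $D_\pm\subset\mdef(X)$) gives $(Yu_+|v_-)=0$ for every $u_+\in D_+$, $v_-\in D_-$, that is, $D_+\perp_Y D_-$.

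The remainder is formal. The $Y$-orthogonality yields $\|u\|_Y^2=\|u_+\|_Y^2+\|u_-\|_Y^2$ for $u=u_++u_-\in\mdef(X)$, and combining this with the norm equivalence produces $\|u_\pm\|\leq c\|u\|$ with $c=\sqrt{\|Y\|/\alpha}$. Hence
\[\|Xu\|\leq\|X_+\|\,\|u_+\|+\|X_-\|\,\|u_-\|\leq (\|X_+\|+\|X_-\|)c\|u\|,\]
proving that $X$ is bounded. Note that this argument works even though $D_\pm$ need not be closed and the algebraic sum need not be topologically direct in the original norm on $H$; the sign conditions on $X_\pm$, through $Y$, provide the missing topological structure.
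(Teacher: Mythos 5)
Your argument is correct; every step checks out. The identity $(Xu|v)-(u|Xv)=(Yu_+|v_-)-(Yu_-|v_+)$ with $Y=X_+-X_-$ follows from the selfadjointness of $X_\pm$ exactly as you compute, the specialisation $u_-=v_+=0$ is legitimate since $D_\pm\subset\mdef(X)$, and the hypotheses enter precisely where they must: uniform positivity of $X_+$ together with $X_-\leq0$ gives $Y\geq\alpha I$, so $(Y\cdot|\cdot)$ is an equivalent inner product and the $Y$-orthogonal decomposition yields $\|u_\pm\|\leq\sqrt{\|Y\|/\alpha}\,\|u\|$.

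Your route differs from the paper's in a way worth noting. The paper works directly with the original inner product: for unit vectors $u\in D_+$, $v\in D_-$ it evaluates $\Real(u-v|X_+u+X_-v)=(u|X_+u)-(v|X_-v)\geq\gamma$, using the Hermitian property of $X$ only to cancel the cross terms $\Real(v|Xu)-\Real(u|Xv)$, and then deduces via Cauchy--Schwarz a uniform lower bound $\|u-v\|\geq\gamma/(\|X_+\|+\|X_-\|)$, i.e.\ a positive angle between $D_+$ and $D_-$; the boundedness of $X$ then follows from the resulting estimate $\|u+v\|^2\geq\delta(\|u\|^2+\|v\|^2)$. You instead extract from the same cancellation the sharper structural fact that $D_+$ and $D_-$ are \emph{exactly} orthogonal with respect to the equivalent inner product $(Y\cdot|\cdot)$. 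Your version is more conceptual and explains \emph{why} the gap exists (it is an honest orthogonality in disguise), at the cost of introducing the auxiliary operator $Y$ and its induced norm; the paper's version is more elementary and stays entirely in the given Hilbert space norm. Both yield the same quantitative conclusion that the algebraic direct sum $D_+\dotplus D_-$ is topologically direct, which is the heart of the lemma.
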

\begin{proof}
  First consider $u\in D_+$, $v\in D_-$ with 
  $\|u\|=\|v\|=1$. Then
  \begin{align*}
    \Real(u-v|X_+u+X_-v)&=\Real\bigl((u|X_+u)-(v|Xu)+(u|Xv)-(v|X_-v)\bigr)\\
    &=(u|X_+u)-(v|X_-v)\geq\gamma
  \end{align*}
  where  $X_+\geq\gamma>0$ and hence
  \[\gamma\leq|(u-v|X_+u+X_-v)|\leq
    \|u-v\|\cdot\bigl(\|X_+\|+\|X_-\|\bigr).\]
  This implies
  \[1-\Real(u|v)=\frac{1}{2}\|u-v\|^2\geq\delta
    \quad\text{with}\quad \delta=\frac{1}{2}
    \left(\frac{\gamma}{\|X_+\|+\|X_-\|}\right)^2>0.\]
  Consequently
  \[|(u|v)|\leq 1-\delta\quad\text{for all}\quad u\in D_+,v\in D_-\text{ with }
  \|u\|=\|v\|=1.\]
  Now for arbitrary $u\in D_+$, $v\in D_-$ we have the estimates
  \begin{align*}
    &\|X(u+v)\|
    =\|X_+u+X_-v\|\leq\max\{\|X_+\|,\|X_-\|\}\bigl(\|u\|+\|v\|\bigr), \\
    &\bigl(\|u\|+\|v\|\bigr)^2\leq 2\bigl(\|u\|^2+\|v\|^2\bigr),\\
    &\begin{aligned}
      \|u+v\|^2&\geq\|u\|^2+\|v\|^2-2|(u|v)|
      \geq\|u\|^2+\|v\|^2-2(1-\delta)\|u\|\|v\|\\
      &\geq\|u\|^2+\|v\|^2-(1-\delta)\bigl(\|u\|^2+\|v\|^2\bigr)
      =\delta\bigl(\|u\|^2+\|v\|^2\bigr).\end{aligned}
  \end{align*}
  Therefore
  \[\|X(u+v)\|\leq\sqrt{\frac{2}{\delta}}\max\bigl\{\|X_+\|,\|X_-\|\bigr\}
    \|u+v\|,\]
  $X$ is bounded.
\end{proof}

Recall from Proposition~\ref{prop:hamspecgap} and \eqref{invrbs-resolv}
that a closed uniformly positive Hamiltonian with a finitely spectral
Riesz basis of subspaces satisfies
$\set{z\in\bbC}{|\Real z|<\gamma}\subset\varrho(T)$ for some $\gamma>0$.

\begin{theo}\label{theo:bndriccsol}
  Let $T$ be a uniformly positive, r0-diagonally dominant
  Hamiltonian with a Riesz basis of Jordan chains, where
  each eigenvalue has finite multiplicity and
  $\sigma_p(T)$ is contained in a strip around $i\bbR$.
  \begin{itemize}
  \item[(i)]
    If\, $U$ is a hypermaximal $J_1$-neutral, $T$-invariant,
    compatible subspace, then
    $U=\Gamma(X)$ where $X$ is bounded, selfadjoint, boundedly invertible,
    $X\mdef(A)=\mdef(A^*)$, and
    $X$ is a solution of the Riccati equation
    \begin{equation}\label{eq2:bndreq}
      A^*Xu+XAu+XBXu-Cu=0, \qquad u\in\mdef(A).
    \end{equation}
    Moreover, the solutions $X_\pm$ corresponding
    to the compatible subspaces $U_\pm$
    associated with $\sigma_p^\pm(T)$ are uniformly positive/negative and
    \begin{equation}\label{eq:riccincl}
      X_-\leq X\leq X_+, \qquad X_-^{-1}\leq X^{-1}\leq X_+^{-1}.
    \end{equation}
  \item[(ii)]
    If\, $X$ is a closed symmetric operator satisfying
    $\mdef(A)\subset\mdef(X)$, $X\mdef(A)\subset\mdef(B)$, and
    \begin{equation}\label{eq:weakbndreq}
      (Xu|Av)+(Au|Xv)+(BXu|Xv)-(Cu|v)=0, \quad u,v\in\mdef(A),
    \end{equation}
    then $X$ is bounded, $X\mdef(A)\subset\mdef(A^*)$ and 
    \eqref{eq2:bndreq} and the first inequality in \eqref{eq:riccincl} hold.
    If in addition $T$ has a compact resolvent, then 
    $\Gamma(X)$ is hypermaximal $J_1$-neutral,
    $T$-invariant and compatible, and hence all conclusions of (i) hold.
  \item[(iii)] If\, $X$ is bounded and $\Gamma(X)$ is $T$-invariant and
    compatible, then there exists a projection $P$ such that 
    \[X=X_+P+X_-(I-P).\]
  \end{itemize}
\end{theo}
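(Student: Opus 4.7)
The plan is to prove (i) and (iii) together using the decomposition machinery of Section~\ref{sec:invsubham}, and to reduce (ii) to (i) once the weakly defined candidate has been shown to be bounded.

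For part~(i), I would first invoke Theorem~\ref{theo:riccsol} to obtain $U = \Gamma(X)$ with $X$ selfadjoint solving the Riccati equation; its hypotheses hold because uniform positivity of $T$ forces $\sigma_p^i(T) = \varnothing$ and a strip about $i\bbR$ inside $\varrho(T)$ by Proposition~\ref{prop:hamspecgap}, and $B$ is positive. Since $\sigma_p^i(T) = \varnothing$, each basis subspace splits $V_k = V_k^+ \oplus V_k^-$, and compatibility of $U$ induces the topological direct decomposition $\Gamma(X) = U^+ \oplus U^-$ with $U^\pm \subset U_\pm = \Gamma(X_\pm)$. Proposition~\ref{prop:posinv} together with Lemma~\ref{lem:selfadjgraph}(iv) then shows $X_\pm$ is bounded and uniformly positive/negative. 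Setting $D_\pm = \proj_1(U^\pm)$ gives $\mdef(X) = D_+ \dotplus D_-$ with $X|_{D_\pm} = X_\pm|_{D_\pm}$, and Lemma~\ref{lemma:splitXbound} forces $X$ to be bounded on $H$. Proposition~\ref{prop:bndreq} then provides $X\mdef(A) \subset \mdef(A^*)$ and the strong Riccati equation~\eqref{eq2:bndreq}. Uniform positivity of $C$ allows Remark~\ref{rem:altricc} to be invoked: $X$ is injective and $X^{-1}$ is selfadjoint, and running the same decomposition argument for the inverse graph yields $X^{-1}$ bounded, hence $X\mdef(A) = \mdef(A^*)$. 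For the ordering $X_- \leq X \leq X_+$ and simultaneously part~(iii), I would transport the decomposition $\Gamma(X) = U^+ \oplus U^-$ through the isomorphism $u \mapsto (u, Xu)$ to a bounded projection $P$ on $H$, obtaining $X = X_+ P + X_-(I-P)$. Testing $(Xu|v) = (u|Xv)$ with $v = Pu$ collapses to the key identity $((X_+ - X_-)(I-P)u \mid Pu) = 0$, whence
\[((X_+ - X)u \mid u) = ((X_+ - X_-)(I-P)u \mid (I-P)u) \geq 0\]
since $X_+ - X_-$ is uniformly positive; the inequality $X \geq X_-$ is symmetric. The inverse ordering follows by applying the same analysis to the auxiliary Hamiltonian $\widetilde T$ of Remark~\ref{rem:altricc}: its invariant graph subspaces are the inverse graphs $\Gamma_{\mathrm{inv}}(X^{-1})$, $\Gamma_{\mathrm{inv}}(X_\pm^{-1})$, and the analogous $Y_- \leq Y \leq Y_+$ with $Y_\pm = X_\pm^{-1}$ is precisely $X_-^{-1} \leq X^{-1} \leq X_+^{-1}$.

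For part~(ii) I would set $Y = X - X_+$ on $\mdef(A)$, subtract the two weak Riccati equations, and simplify via
\[(BXu \mid Xv) - (BX_+ u \mid X_+ v) = (BXu \mid Yv) + (Yu \mid BX_+ v)\]
(using $X_+ \mdef(A) \subset \mdef(B)$) to obtain the weak Sylvester-type relation
\[(Yu \mid (A + BX_+)v) + ((A + BX)u \mid Yv) = 0, \qquad u, v \in \mdef(A).\]
Taking $v = u$, taking real parts and using $B \geq \gamma$ yields $\Real(Yu \mid Wu) \leq -\tfrac{\gamma}{2}\|Yu\|^2$ where $W = A + BX_+$. By Proposition~\ref{prop:bndreq}, $\sigma(W) = \sigma(T|_{U_+}) \subset \{\Real z \geq \gamma\}$, so $0 \in \varrho(W)$ and substituting $u = W^{-1}f$ shows $YW^{-1}$ is bounded on $H$ with $\|YW^{-1}\| \leq 2/\gamma$. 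The main obstacle is upgrading this relative bound to $Y$ bounded on $H$: my plan there is to first establish the form inequalities $X_- \leq X \leq X_+$ on $\mdef(A)$ by exploiting the Sylvester relation together with the spectral gap of $W$ and the Riesz basis of Jordan chains of $T$, and then to apply the Cauchy--Schwarz inequality for the nonnegative form $X - X_-$ to get $\|(X - X_-)u\| \leq \|X_+ - X_-\|\,\|u\|$ on $\mdef(A)$; closedness of $X$ then extends the bound to all of $H$. Once $X$ is known to be bounded, Remark~\ref{rem:weakbndreq} yields the strong Riccati equation~\eqref{eq2:bndreq} and $X\mdef(A) \subset \mdef(A^*)$, the ordering is the one from part~(i), and in the compact-resolvent case Proposition~\ref{prop:invcomp} applied at any $z \in \varrho(T)$ forces $\Gamma(X)$ to be compatible with $(V_k)_{k \in \bbN}$, reducing everything back to part~(i).
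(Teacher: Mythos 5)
Your treatment of (i) and (iii) is essentially sound and close to the paper's: the reduction to Theorem~\ref{theo:riccsol}, the splitting of $\Gamma(X)$ into parts lying in $\Gamma(X_+)$ and $\Gamma(X_-)$, Lemma~\ref{lemma:splitXbound} for boundedness, Proposition~\ref{prop:bndreq} for the strong equation, and the passage to $\widetilde T$ for the inverse all match. One point to make explicit: the topological directness of $\Gamma(X)=U^+\oplus U^-$ is not a consequence of compatibility alone (Example~\ref{ex:unbndsol} shows the analogous sum need not be topological); here it comes either from the hypothesis that the root subspaces themselves form a Riesz basis (so the split is along the index set, Proposition~\ref{prop:rbs-decomp}(ii)) or, a posteriori, from the closed graph theorem once $X$ is bounded. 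Your derivation of $X_-\leq X\leq X_+$ in (i) from the projection formula of (iii) — using symmetry of $X$ to kill the cross term $((X_+-X_-)(I-P)u\mid Pu)$ — is correct and is actually a cleaner route than the paper's, which obtains the first inequality of \eqref{eq:riccincl} as a by-product of the argument for (ii).

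Part (ii) is where the gap lies. You correctly derive the weak Sylvester relation and the inequality $\Real(Yu\mid Wu)\leq-\tfrac{\gamma}{2}\|Yu\|^2$ with $W=A+BX_+$, and you correctly observe that this only yields a relative bound $\|Yu\|\leq\tfrac{2}{\gamma}\|Wu\|$. But the step you defer — ``establish the form inequalities $X_-\leq X\leq X_+$ on $\mdef(A)$ by exploiting the Sylvester relation together with the spectral gap of $W$ and the Riesz basis of Jordan chains'' — is precisely the crux, and no mechanism is given for it. The paper's mechanism is Lemma~\ref{lem:resolvint}: by Proposition~\ref{prop:bndreq}, $\sigma_p(W)$ lies entirely in the right half-plane, $i\bbR\subset\varrho(W)$, and the root subspaces $L_\lambda$ of $W$ are complete (being projections of those of $T|_{\Gamma(X_+)}$), so $\frac{1}{i\pi}\int'_{i\bbR}(W-z)^{-1}v\,dz=v$ for $v\in\sum_\lambda L_\lambda$; inserting the pointwise inequality $\Real(\Delta v\mid(W-it)^{-1}v)\geq0$ (with $\Delta=X_+-X$) into this principal-value integral gives $(\Delta v\mid v)\geq0$ on a dense subspace, after which your Cauchy--Schwarz and closedness argument does finish the job. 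Without this resolvent-integral step (or an equivalent), accretivity of $\Delta$ relative to $W$ does not imply $\Delta\geq0$, so the boundedness of $X$ in (ii) is not established. Note also that you cannot fall back on your (iii)-based ordering argument here, since in (ii) (without compact resolvent) $\Gamma(X)$ is not known to be compatible.
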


\begin{proof}
  (i):
  Theorem~\ref{theo:riccsol} and Remark~\ref{rem:altricc} yield that $U$
  is a graph $U=\Gamma(X)$ with $X$ selfadjoint and injective.
  In particular $U_\pm=\Gamma(X_\pm)$ where $X_\pm$ is also bounded and
  uniformly positive/negative
  by Proposition~\ref{prop:posinv} and Lemma~\ref{lem:selfadjgraph}.
  Let $(\lambda_k)_{k\in\bbN}$ be the eigenvalues of $T$.
  Since the root subspaces $\rsub(\lambda_k)$ of $T$ form a Riesz basis, 
  we have 
  $\Gamma(X)=\bigoplus^2_{k\in\bbN}U_k$ with
  $T$-invariant subspaces $U_k\subset\rsub(\lambda_k)$. Hence
  \begin{equation}\label{eq:bndsoldec}
    \Gamma(X)= W_+\oplus W_- \quad\text{with}\quad
    W_+=\rbsdec_{\Real\lambda_k>0}U_k, \quad
    W_-=\rbsdec_{\Real\lambda_k<0}U_k,
  \end{equation}
  and $W_\pm\subset\Gamma(X_\pm)$.
  If $D_\pm=\proj_1(W_\pm)$ where $\proj_1$ is the projection onto
  the first component,
  then $\mdef(X)=D_+\dotplus D_-$, $X|_{D_\pm}=X_\pm|_{D_\pm}$,
  and Lemma~\ref{lemma:splitXbound} implies that $X$ is bounded.
  From Proposition~\ref{prop:bndreq} we thus obtain
  $X\mdef(A)\subset\mdef(A^*)$ and \eqref{eq2:bndreq}.
  Then also \eqref{eq:weakbndreq}, and the first inequality in
  \eqref{eq:riccincl} will be a consequence of (ii).
  As $\Gamma(X_{(\pm)})=\Gamma_\mathrm{inv}(X^{-1}_{(\pm)})$, 
  the above reasoning applied to the Hamiltonian
  $\widetilde{T}$ from \eqref{eq:altham}
  yields the boundedness of $X^{-1}$, $X^{-1}\mdef(A^*)\subset\mdef(A)$
  (hence $X\mdef(A)=\mdef(A^*)$),
  and the second inequality in \eqref{eq:riccincl}.

  (ii):
  Since equation \eqref{eq:weakbndreq} holds for $X_+$, we have
  \begin{align*}
    0&=(Au|(X_+-X)u)+((X_+-X)u|Au)+(BX_+u|X_+u)-(BXu|Xu)\\
    &=((A+BX_+)u|(X_+-X)u)+((X_+-X)u|(A+BX_+)u)\\
    &\hspace{45ex}-(B(X_+-X)u|(X_+-X)u)
  \end{align*}
  for $u\in\mdef(A)$.
  With $\Delta=X_+-X$ and $t\in\bbR$ we obtain
  \[2\Real\bigl((A+BX_+-it)u\big|\Delta u\bigr)
    =(B\Delta u|\Delta u)\geq 0.\]
  As a consequence of Proposition~\ref{prop:bndreq}, we have
  that $i\bbR\subset\varrho(A+BX_+)$, that $\sigma_p(A+BX_+)$ is contained
  in the right half-plane,
  and that the system of root subspaces $(L_\lambda)$ of $A+BX_+$ is complete
  in $H$.
  Then
  \[\Real\bigl( v\big|\Delta(A+BX_+-it)^{-1}v\bigr)\geq 0
    \quad\text{for}\quad v\in H,\]
  and Lemma~\ref{lem:resolvint} yields
  \[(\Delta v|v)=\frac{1}{\pi}\int_\bbR^\prime
    \Real\bigl(\Delta v\big|(A+BX_+-it)^{-1}v\bigr)\,dt\geq 0
    \quad\text{for}\quad 
    v\in\!\!\sum_{\lambda\in\sigma_p(A+BX_+)}\!\!L_\lambda.\]
  Hence $X\leq X_+$ on $\sum_\lambda L_\lambda$.
  Analogously we find $X_-\leq X$ on $\sum_\lambda L_\lambda$.
  Since $X_+$ and $X_-$ are bounded, this implies that 
  $X$ is bounded on $\sum_\lambda L_\lambda$ and hence on $H$ since $X$
  is closed.
  Consequently $X_-\leq X\leq X_+$ holds on $H$,
  and $X\mdef(A)\subset\mdef(A^*)$ and \eqref{eq2:bndreq} follow by
  Remark~\ref{rem:weakbndreq}.
  
  Let now $T$ have a compact resolvent.
  Theorem~\ref{theo:bndsol-charac} implies that $\Gamma(X)$ is a
  compatible subspace. It is also hypermaximal $J_1$-neutral since 
  $X$ is selfadjoint.

  (iii): We have again the decomposition \eqref{eq:bndsoldec}.
  In particular, $(U_k)$ is a Riesz basis of $\Gamma(X)$.
  Let $D_k=\proj_1(U_k)$.
  Then $(D_k)$ is complete in $H$.
  Moreover, if $c$ is the constant from \eqref{rbs-ineq-fin} for the basis 
  $(U_k)$ and $u_k\in D_k$, then
  \begin{gather*}
    c^{-1}\sum_{k=0}^n\|u_k\|^2
    \leq c^{-1}\sum_{k=0}^n\Bigl\|\pmat{u_k\\Xu_k}\Bigr\|^2
    \leq\Bigl\|\sum_{k=0}^n\pmat{u_k\\Xu_k}\Bigr\|^2
    \leq(1+\|X\|^2)\Bigl\|\sum_{k=0}^nu_k\Bigr\|^2,\\
    \Bigl\|\sum_{k=0}^nu_k\Bigr\|^2\leq\Bigl\|\sum_{k=0}^n\pmat{u_k\\Xu_k}
    \Bigr\|^2 \leq c\sum_{k=0}^n\Bigl\|\pmat{u_k\\Xu_k}\Bigr\|^2
    \leq c(1+\|X\|^2)\sum_{k=0}^n\|u_k\|^2.
  \end{gather*}
  So $(D_k)_{k\in\bbN}$ is a Riesz basis of subspaces of $H$.
  Consequently, we have the decomposition
  \[H=\rbsdec_{\Real\lambda_k>0}D_k\oplus\rbsdec_{\Real\lambda_k<0}D_k.\]
  Let $P:H\to H$ be the corresponding projection onto
  $\bigoplus^2_{\Real\lambda_k>0}D_k$.
  Since $X|_{D_k}=X_\pm|_{D_k}$ for $\Real\lambda_k\gtrless 0$,
  we obtain $X=X_+P+X_-(I-P)$.
\end{proof}

\section{Examples}
\label{sec:exam}

In the first example we consider a Hamiltonian for which the 
Riccati equation has unbounded solutions which
can be explicitly calculated.
In the other examples we apply our theory to non-trivial Riccati equations
involving differential operators.

\begin{example}\label{ex:unbndsol}
  Let $T$ be a nonnegative Hamiltonian such that $A$ is normal,
  $B=I$, $C$ is selfadjoint, and $A$ and $C$ admit an orthonormal basis
  $(e_k)_{k\geq 1}$ of common eigenvectors,
  $Ae_k=ik^2e_k$ and $Ce_k=ke_k$ for $k\geq 1$.
  Then $C$ is $1/2$-subordinate to $A$ and  Theorem~\ref{theo:ham-invrbs}
  can be applied.
  The subspaces  $V_k=\bbC e_k\times\bbC e_k$ constitute
  an orthogonal decomposition $H\times H=\bigoplus_kV_k$, which is
  obviously finitely spectral for $T$ with
  \[T|_{V_k}\cong\pmat{ik^2&1\\k&ik^2}.\]
  The eigenvalues and corresponding normalised eigenvectors of $T|_{V_k}$
  are
  \[\lambda_k^\pm=ik^2\pm \sqrt{k},\quad
    v_k^\pm=\frac{1}{\sqrt{1+k}}\pmat{e_k\\\pm \sqrt{k}e_k}.\]
  The hypermaximal $J_1$-neutral compatible subspace
  corresponding to 
  an sc-set $\sigma\subset\sigma(T)$ is given by
  \[
    U_\sigma=\rbsdec_{k\geq 1}U_k \quad\text{with}\quad
    U_k=\begin{cases}\bbC v_k^+&\text{if }\, \lambda_k^+\in\sigma,\\
           \bbC v_k^- &\text{if }\, \lambda_k^-\in\sigma,
	\end{cases}
  \]
  and it is the graph $U_\sigma=\Gamma(X_\sigma)$ of a selfadjoint solution 
  $X_\sigma$  of \eqref{eq:ricceq},
  \[
    X_\sigma e_k=\Biggl\{\begin{aligned}
       \sqrt{k}\,e_k &\quad\text{if }\, \lambda_k^+\in\sigma,\\
       -\sqrt{k}\,e_k &\quad\text{if }\, \lambda_k^-\in\sigma.
		 \end{aligned}
  \]
  In particular, $X_\sigma$ is unbounded and boundedly invertible.
  Consider now the sequences $(x_k)_{k\in\bbN}$, $(x_k^+)_{k\in\bbN}$
  and $(x_k^-)_{k\in\bbN}$ given by
  \[x_k=\pmat{\frac{2}{\sqrt{k}}e_k\\0}, \qquad
    x_k^\pm=\sqrt{\frac{1+k}{k}}\,v_k^\pm.\]
  Then $x_k=x_k^++x_k^-$ with $x_k^\pm\in\bbC v_k^\pm$,
  the sequence $(x_k)$ converges to zero, while the sequences
  $(x_k^\pm)$ do not.
  Consequently, the algebraic direct sum
  \[\bigoplus_{k\geq 1}\bbC v_k^+\dotplus\bigoplus_{k\geq 1}\bbC v_k^-\]
  is not topological direct, the system of eigenvectors
  $(v_k^\pm)_{k\geq 1}$ is not a Riesz basis,
  and 
  the operator $T$ is neither Riesz-spectral nor dichotomous, 
  see also Remark~\ref{rem:dichot}.
\end{example}

By choosing different eigenvalues for the operators $A$ and $C$ in
the previous example, it is easy to construct solutions $X_\sigma$ with 
different properties, for example solutions which are unbounded and
not boundedly invertible.

\begin{example}\label{ex:req-unbnddiff}
  Let $H=L^2([a,b])$ and consider the operators $A$, $B$, $C$ on $H$ 
  given by
  \begin{align*}
    &Au=u''',\quad Bu=-(g_1u')'+h_1u,\quad Cu=-(g_2u')'+h_2u,\\
    &\mdef(A)=\bigl\{u\in\ W^{3,2}([a,b])\,\big|\,
      u(a)=u(b)=0,\,u'(a)=u'(b)\bigr\},\\
    &\mdef(B)=\mdef(C)=\bigl\{u\in C^2([a,b])\,\big|\,
      u(a)=u(b)=0\bigr\}
  \end{align*}
  where $g_1,g_2\in C^1([a,b])$, $h_1,h_2\in L^2([a,b])$, 
  $g_1,g_2,h_1,h_2\geq 0$, and $W^{k,2}([a,b])$ denotes the Sobolev space
  of $k$ times weakly differentiable, square integrable functions.
  Then $A$ is skew-selfadjoint with compact resolvent,
  $0\in\varrho(A)$, and $\sigma(A)$ consists of at most two sequences
  of eigenvalues
  \[\lambda_{jk}=c_{jk}k^3, \quad k\geq k_{j0},\, j=1,2,\]
  with converging sequences $(c_{jk})$, see \cite{naimark}.
  Since the multiplicity of every eigenvalue is at most three,
  this implies that
  \[\sup_{r\geq1}\frac{N(r,A)}{r^{1/3}}<\infty.\]
  The operators $B$ and $C$ are symmetric and nonnegative.  
  Using Sobolev and interpolation inequalities, see \cite{adams},
  we can find constants $b_1,b_2,b_3\geq0$ such that
  \begin{align*}
    \|Bu\|_{L^2} 
    &\leq\|g_1\|_\infty\|u''\|_{L^2}+\|g_1'\|_{L^2}\|u'\|_\infty
    +\|h_1\|_{L^2}\|u\|_\infty
    \leq b_1\|u\|_{W^{2,2}}\\
    &\leq b_2\|u\|_{L^2}^{1/3}\|u\|_{W^{3,2}}^{2/3}
    \leq b_3\|u\|_{L^2}^{1/3}\bigl(\|u\|_{L^2}+\|u'''\|_{L^2}\bigr)^{2/3}\\
    &\leq b_3\bigl(\|A^{-1}\|+1\bigr)^{2/3}\|u\|_{L^2}^{1/3}
    \|Au\|_{L^2}^{2/3}
  \end{align*}
  for $u\in\mdef(A)$.
  Hence $B$, and similarly $C$, are $2/3$-subordinate to $A$.
  By Theorem~\ref{theo:ham-invrbs}, the Hamiltonian
  corresponding to $A,B,C$ thus has a finitely spectral Riesz basis of 
  subspaces.
  If $g_{1}>0$ or $h_{1}>0$, and if $g_2>0$ or $h_2>0$, then both
  $B$ and $C$ are positive,
  and Corollary~\ref{coroll:nonnegsol} yields
  an injective selfadjoint solution $X_\sigma$ of \eqref{eq:ricceq}
  for every sc-set $\sigma\subset\sigma(T)$.
\end{example}

The example above immediately generalises to normal differential operators
$A$ on $[a,b]$ of order $n$ and nonnegative symmetric differential operators
$B,C$ of order at most $n-1$.

\begin{example}\label{ex:req-uposmult}
  Let $H=L^2([-1,1])$ and consider the operators
  \begin{align*}
    &Au=u',\quad\mdef(A)=\bigl\{u\in W^{1,2}([-1,1])\,\big|\,u(-1)=u(1)
      \bigr\},\\
    &Bu=bu,\quad Cu=cu,\quad\mdef(B)=\mdef(C)=H
  \end{align*}
  with $b,c\in L^\infty([-1,1])$ and $b(t),c(t)\geq\gamma>0$ for
  almost all $t\in[-1,1]$. $A$ is 
  skew-selfadjoint with compact resolvent and simple eigenvalues
  $\lambda_k=i\pi k$.
  $B$ and $C$ are bounded and uniformly positive.
  If now $\|b\|_\infty,\|c\|_\infty<\pi/2$, then we can apply
  Theorems~\ref{theo:ham-rbeigvec} and~\ref{theo:bndriccsol}
  and obtain bounded, selfadjoint, boundedly
  invertible solutions of the Riccati equation \eqref{eq2:bndreq}.

  Consider now the special case that $c=\chi^2 b$ with
  \[\chi(t)=\begin{cases}1, &t<0,\\\alpha, &t\geq0,\end{cases}
    \qquad\alpha\in\bbR\setminus\{0,1\}.\]
  Let $X\in L(H)$ be the operator of multiplication with $\chi$.
  It is not hard to see that
  \[\mdef(A)\cap X^{-1}\mdef(A)
    =\Set{u\in W^{1,2}([-1,1])}{u(-1)=u(0)=u(1)=0}\]
  and $AXu=\chi u'$ for $u\in\mdef(A)\cap X^{-1}\mdef(A)$. Hence
  \[-AXu+XAu+XBXu-Cu
    =-\chi u'+\chi u'+\chi^2bu-cu=0.\]
  Consequently, $X$ is a solution of the Riccati equation
  \[-AXu+XAu+XBXu-Cu=0,\qquad u\in\mdef(A)\cap X^{-1}\mdef(A),\]
  and $\mdef(A)\cap X^{-1}\mdef(A)\subset H$ is dense.
  In particular, $\Gamma(X)$ is a $T$-invariant subspace.
  On the other hand, since $\mdef(A)\cap X^{-1}\mdef(A)\neq\mdef(A)$
  we have $X\mdef(A)\not\subset\mdef(A)$,
  and with Theorem~\ref{theo:bndsol-charac} we conclude that $\Gamma(X)$ is
  not a compatible subspace.
\end{example}

\section*{Acknowledgement}
The author is grateful for the support of
Deutsche Forschungsgemeinschaft DFG, grant no.\ TR368/6-1, and
of Schweizerischer Nationalfonds SNF, grant no.~15-486.

\bibliographystyle{cwyss}
\bibliography{biblist}

\end{document}